\theoremstyle{plain} 
\newtheorem{theorem}{Theorem}[section]
\newtheorem{lemma}{Lemma}[section]
\newtheorem{corollary}{Corollary}[section]
\theoremstyle{definition}
\newtheorem{definition}{Definition}[section]
\theoremstyle{remark}
\newtheorem{remark}{Remark}[section]
\newcommand{\R}{\mathbb{R}}
\newcommand{\C}{\mathbb{C}}
\newcommand{\normeq}[1]{{\left\vert\kern-0.25ex\left\vert\kern-0.25ex\left\vert #1 
    \right\vert\kern-0.25ex\right\vert\kern-0.25ex\right\vert}}
\lbrace\begin{array}{r@{\hspace{1mm}}ll}}%
\title[NLS-DNN]{Error bounds for Physics Informed Neural Networks in Nonlinear Schr\"odinger equations placed on unbounded domains}
\author{Miguel \'A. Alejo} 
\address{Departamento de Matem\'aticas. Universidad de C\'ordoba\\
C\'ordoba, Spain.}
\email{malejo@uco.es}
\thanks{M.\'A.A. was partially supported by Grant PID2022-137228OB-I00 funded by the Spanish Ministerio de Ciencia, Innovaci\'on y Universidades, MICIU/AEI/10.13039/501100011033.}
\author{Lucrezia Cossetti} 
\address{Ikerbasque and UPV/EHU, Aptdo. 644, 48080, Bilbao, Spain}
\email{lucrezia.cossetti@ehu.eus}
\thanks{L. C. was supported by the grant Ram\'on y Cajal RYC2021-032803-I funded by MCIN/AEI/10.13039/50110 0011033 and by the European Union NextGenerationEU/PRTR, the Deutsche Forschungsgemeinschaft (DFG, German Research Foundation) -- Project-ID 258734477 -- SFB 1173 and by Ikerbasque.}
\author{Luca Fanelli} 
\address{Ikerbasque \& Universidad del Pa\'is Vasco/Euskal Herriko Unibertsitatea, UPV/EHU \& BCAM, Aptdo. 644, 48080, Bilbao, Spain}
\email{luca.fanelli@ehu.es}
\thanks{L. F. was supported by the projects PID2021-123034NB-I00/MCIN/AEI/10.13039/501100011033 funded by the Agencia Estatal de Investigaci\'on, IT1615-22 funded by the Basque Government, and by Ikerbasque. He is also partially supported by the Basque Government through the BERC 2022–2025 program and by the Spanish Agencia Estatal de Investigaci\'on through BCAM Severo Ochoa excellence accreditation CEX2021-001142-S/MCIN/AEI/10.13039/501100011033.}
\author{Claudio Mu\~noz}  
	\address{Departamento de Ingenier\'{\i}a Matem\'atica and Centro
de Modelamiento Matem\'atico (UMI 2807 CNRS), Universidad de Chile.}
	\email{cmunoz@dim.uchile.cl}
	\thanks{C.M. was partially funded by Chilean research grants ANID 2022 Exploration 13220060, FONDECYT 1231250, and Basal CMM FB210005.}
\author{Nicol\'as Valenzuela}
\address{Departamento de Ingenier\'{\i}a Matem\'atica, Universidad de Chile.}
	\email{nvalenzuela@dim.uchile.cl}
	\thanks{N.V. was partially funded by Chilean research grants ANID 2022 Exploration 13220060, FONDECYT 1231250, a Latin America PhD Google Fellowship, ANID-Subdirecci\'on de Capital Humano/Doctorado Nacional/2023-21231021 and Basal CMM FB210005.}
\numberwithin{equation}{section}
\begin{document}

\begin{abstract}
We consider the subcritical nonlinear Schr\"odinger (NLS) in dimension one posed on the unbounded real line. Several previous works have considered the deep neural network approximation of NLS solutions from the numerical and theoretical point of view in the case of bounded domains. In this paper, we introduce a new PINNs method to treat the case of unbounded domains and show rigorous bounds on the associated approximation error in terms of the energy and Strichartz norms, provided a reasonable integration scheme is available. Applications to traveling waves, breathers and solitons, as well as numerical experiments confirming the validity of the approximation are also presented as well.
\end{abstract}

\subjclass[2010]{Primary 65K10, 65M99, Secondary: 68T07}

\keywords{Nonlinear Schr\"odinger, Physics Informed Neural Networks, Deep Neural Networks, Unbounded Domains, Solitons}

\maketitle





\section{Introduction}

\subsection{Setting} Among the deep learning methodologies developed in recent years, Physics-Informed Neural Networks (PINNs) have emerged as a highly relevant approach for approximating solutions to physical models using neural networks. Originally introduced in \cite{LLF,LC,RK,RPK}, the PINN framework leverages the fact that deep neural networks (DNNs) possess the {\it universal approximation property}, i.e., the ability to approximate any continuous and even measurable function \cite{Hornik,Leshno,LLP,LLF,Dmitry}. Furthermore, PINNs are able to mirror the underlying physical background of the problem, improving the modeling accuracy of DNNs.

Significant examples and advancements in the development of PINNs are provided in \cite{RK,RPK,PLK,MJK,Raissi,Raissi2, Mishra,Kar}. PINNs have also been extended to solve elliptic partial differential equations (PDEs) \cite{BDPS,Zer} and inverse problems \cite{MM20,PC}. This computational approach is now widely applied across numerous research areas, including, but not limited to, General Relativity \cite{FMRTF,LNPC,Ha}, discrete systems to uncover their underlying dynamical equations \cite{SZCK,ZKCK}, Quantum Mechanics \cite{WY,PLC}, wave propagation \cite{RHSK}, and fiber optics \cite{ZYXLCYC,ZFWCQ}.

In essence, PINNs enable the use of DNNs as ansatz spaces for solving physically-driven PDEs, combining the power of neural networks with the constraints and insights provided by physical models.

In the broader context, various results demonstrate that deep neural networks are effective in approximating solutions for certain classes of PDEs. These results span both numerical and theoretical perspectives and cover a wide range of applications. Notably, the PDEs studied using these methods include linear and semilinear parabolic equations \cite{Han, Han2, Hure, Hutz2, Beck1, Beck2, Jentz1}, stochastic time-dependent models \cite{Grohs2, Beck2, Berner1}, linear and semilinear time-independent elliptic equations \cite{LLP, Grohs}, a variety of fluid dynamics equations \cite{Lye, MJK, Mishra2}, and both time-dependent and time-independent non-local equations \cite{PLK, Raissi0, Gonnon, Val22, Val23, Castro}. We refer to these works for further developments in this rapidly expanding research area.

Moreover, the learning methods used to achieve these results can be classified into several categories. In addition to the previously mentioned PINN technique, other prominent deep learning methods include Monte Carlo approaches \cite{Val22,Val23,MuVa24}, Multilevel Picard Iterations (MLP) \cite{Hutz2, Beck}, and neural networks that approximate infinite-dimensional operators, such as DeepONets \cite{Chen, Lu, Castro2, CalderonDO}. Further methodologies are discussed in the recent review \cite{Beck23} and the monograph \cite{JKW}.

In essence, PINNs take advantage of two key sources of data commonly available in physical systems: initial and boundary conditions, although conserved quantities may also be incorporated. In the context of classical PDEs such as Schrödinger \cite{BKM22}, Burgers, Navier-Stokes \cite{Raissi2, MJK, Mishra, RK, Lye, KMPT}, Kolmogorov \cite{Jentz1}, and nonlinear diffusion in multiphase materials \cite{KK}, the PINN method provides accurate numerical approximations of classical solutions.
\vskip0.3cm
For nonlinear dispersive PDEs, one of the key challenges in developing approximation methods arises from the fact that these equations are often posed on unbounded domains. A canonical example is the nonlinear Schr\"odinger equation (NLS): PINNs applied to NLS models on bounded domains were first computed in the foundational work \cite{RPK}. One of the earliest rigorous results in this area—using PINNs to approximate dispersive models in spatially bounded domains—was achieved in \cite{BKM22}, building on earlier work in \cite{MM22}.

For wave models, PINNs were used to solve the wave equation in \cite{MMN}, and Monte Carlo methods were extended for wave models in \cite{MuVa24}. Moreover, wave models were shown to be suitable for PINN approximation in \cite{LBK24}, including applications for approximating discontinuous solutions of nonlinear hyperbolic PDEs \cite{RMM}. An interesting recent paper is \cite{JORS}, where an integration scheme is proposed for the integration of the cubic NLS on the 2D-torus. However, when restricting to bounded domains, many physically significant phenomena, such as the emergence of soliton-like solutions, cannot be captured.

One of the main difficulties in this field is ensuring the mathematically rigorous global validity of the PINN method for unbounded domains. To address this limitation, several improved PINN formulations and alternative neural network approaches have been proposed. For example, in problems posed on infinite domains, one natural computational approach involves splitting the infinite domain into finite segments using absorbing or non-reflecting boundary conditions \cite{RRSL,RRCWSL}. Another method employs a spectral expansion formulation \cite{XBC} to handle the infinite domain, where PINNs are only used to compute the unknown coefficients in the spectral expansion. Other approaches to manage unbounded domains have been explored for specific applications, such as seismic waves and thermal dynamics \cite{RRCWSL, BE}.
\vskip0.3cm
The objective of this paper is to propose a modification to the PINN technique that enables its application to unbounded problems, specifically targeting classical soliton solutions. For simplicity, we will focus on a canonical one-dimensional dispersive model that supports the existence of solitons and breathers: the 1D-focusing NLS equation posed on the real line,
\begin{equation}\label{eq:NLS-anyalpha}
	\begin{cases}
	i\partial_t u + \partial_x^2 u + |u|^{\alpha-1}u =0,
	\\
	u(0,\cdot) :=u_0(\cdot)\in H^1(\mathbb R;\mathbb C),
	\end{cases}
\end{equation}
where $u=u(t,x):\R\times\R\to\C$, and $2\leq \alpha<5$, which is the natural $L^2$ subcritical case.\footnote{Some technical issues are present if $\alpha<2$, but we believe that our results also apply to that case.} Global in time solutions in $H^1$ are well-known to exist \cite{GV,GV2,Tsutsumi,Bourgain,Cazenave,Cazenave2,LP},  due to the fact that the conserved energy
\[
E(t):=\int_{\R}|\partial_xu(t)|^2\,dx - \frac{1}{\alpha+1}\int_\R|u(t)|^{\alpha+1}\,dx=E(0)
\]
is a bound from above of the $H^1$-norm (in the subcritical case).
Since no boundaries exist, we will replace the boundary data by \emph{suitable global linear data}, in the following sense. 
Let $I$ be a bounded time interval containing zero. Let
\begin{equation}
\label{eq:pq}
(p,q)=\left(\tfrac{4(\alpha+1)}{\alpha-1},\alpha+1\right)
\end{equation}
be Schr\"odinger admissible pairs (see Definition \ref{def:adm-pairs}). Let 
\begin{enumerate}
\item $f=f(x)$ be bounded differentiable, and $g_n=g_n(t,x)$, $n=2,3,4$, bounded and continuous, both complex-valued; 
\item Integers $N_1,N_2,N_3,N_4\geq 1$, $M_2,M_3,M_4\geq 1$;
\item For $n=1,2,3,4$, collocation points $(x_{n,j})_{j=1}^{N_n}\subseteq\mathbb R$, and for $n=2,3,4$, times $(t_{n,\ell})_{\ell=1}^{M_n} \subseteq I$; 
\item For $n=1,2,3,4$, vector weights $(w_{n,j})_{j=1}^{N_n}\subseteq {\color{black}\R_+}$, and for $n=2,3,4$, matrix weights $(w_{n,\ell,j})_{\ell,j=1}^{M_n,N_n} \subseteq {\color{black}\R_+}$.
\end{enumerate}
Consider the approximative norms 
\begin{equation}\label{Loss1}
\begin{aligned}
\mathcal J_{H^1,N_1}[f]:= &~{} \left(  \frac1{N_1}\sum_{j=1}^{N_1} w_{1,j} \left( |f(x_j)|^2 +|f'(x_j)|^2 \right) \right)^{1/2},\\
\mathcal J_{\infty, H^1,N_2,M_2}[g_2]:= &~{}\max_{\ell=1,\ldots,M_2} \left(  \frac1{N_2}\sum_{j=1}^{N_2} w_{2,\ell,j} \left( |g_2(t_{2,\ell},x_{2,j})|^2 +|\partial_x g_2(t_{2,\ell},x_{2,j})|^2 \right) \right)^{1/2},\\
 \mathcal J_{p,q,N_3,M_3}[g_3]:=&~{} \left( \frac1{M_3}\sum_{\ell=1}^{M_3} \left( \frac1{N_3} \sum_{j=1}^{N_3}   w_{3,j,\ell} \left(|g_3(t_{3,\ell},x_{3,j})|^q{\color{black}+|\partial_x g_3(t_{3,\ell},x_{3,j})|^q} \right) \right)^{p/q} \right)^{1/p},\\
  \mathcal J_{p',q',N_4,M_4}[g_4]:=&~{} \left( \frac1{M_4}\sum_{\ell=1}^{M_4} \left( \frac1{N_4} \sum_{j=1}^{N_4}   w_{4,j,\ell} \left(|g_4(t_{4,\ell},x_{4,j})|^{q'}{\color{black}+|\partial_x g_4(t_{4,\ell},x_{4,j})|^{q'}} \right) \right)^{p'/q'} \right)^{1/p'}.
 \end{aligned}
\end{equation}
As it is common in numerical computations, the collocation points and times will be chosen to be uniformly spaced, mimicking the standard {\color{black}quadrature rules}. Our main result demonstrates that, under smallness assumptions on the quantities $\mathcal{J}_{H^1,N_1}$ and $\mathcal{J}_{p',q',N_4,M_4}$ for certain functions, PINNs can satisfactorily approximate the NLS dynamics in unbounded domains.

\noindent
{\bf Hypotheses on integration schemes}. Following \cite{MM22}, our first requirement is a suitable way to approach integration.
\begin{enumerate}
\item[(H1)] Efficient integration. There exist efficient approximative rules for the computations of the $H^1_x(\mathbb R)$, $L^\infty_t H^1_x(I\times \mathbb R)$, $L^p_t W^{1,q}_x(I\times \mathbb R)$ and $L^{p'}_t W^{1,q'}_x(I\times \mathbb R)$ norms, in terms of the quantities defined in \eqref{Loss1}, in the following sense: 
\begin{itemize}
\item For any $\delta>0$, and all $f\in H^1(\mathbb R)$, there exist $N_1 \in\mathbb N$, $R_1>0$, points $(x_{1,j})_{j=1}^{N_1} \subseteq [-R_1,R_1]$ and weights $(w_{1,j})_{j=1}^{N_1} \subseteq {\color{black}\R_+}$ such that
\[
 \left| \|f\|_{H^1(\mathbb R)} -\mathcal J_{H^1,N_1}(f) \right| <\delta.
\]
\item  For any $\delta>0$, and all $g_1\in L^\infty_t H^1_x(I\times \R)$, there are $N_2,M_2 \in\mathbb N$, $R_2>0$, points $(t_{2,\ell}, x_{2,j})_{j=1}^{M_2,N_2} \subseteq I\times [-R_2,R_2]$ and weights $(w_{2,\ell,j})_{\ell,j=1}^{M_2,N_2} \subseteq {\color{black}\R_+}$ such that
\[
 \left| \|g_2\|_{L^\infty_t H^1_x(I\times \R)} -\mathcal J_{\infty,H^1,N_2,M_2}(g_2) \right| <\delta.
\]
\item  For any $\delta>0$, and all $g_3\in L^{p}_t W^{1,q}_x(I\times \R)$, there are $N_3,M_3 \in\mathbb N$, $R_3>0$, points $(t_{3,\ell}, x_{3,j})_{j=1}^{M_3,N_3} \subseteq I\times [-R_3,R_3]$ and weights $(w_{3,\ell,j})_{\ell,j=1}^{M_3,N_3} \subseteq {\color{black}\R_+}$ such that
\[
 \left| \|g_3\|_{L^p_t W^{1,q}_x(I\times \R)} -\mathcal J_{p,q,N_3,M_3}(g_3) \right| <\delta.
\]
\item  For any $\delta>0$, and all $g_4 \in L^{p'}_t W^{1,q'}_x(I\times \R)$, there are $N_4,M_4 \in\mathbb N$, $R_4>0$, points $(t_{4,\ell}, x_{4,j})_{j=1}^{M_4,N_4} \subseteq I\times [-R_4,R_4]$ and weights $(w_{4,\ell,j})_{\ell,j=1}^{M_4,N_4} \subseteq {\color{black}\R_+}$ such that
\[
\left| \|g_4\|_{L^{p'}_t W^{1,q'}_x(I\times \R)} -\mathcal J_{p',q',N_4,M_4}(g_4) \right| <\delta.
\]
\end{itemize}
\end{enumerate}
Note that even though we are considering functions defined in the entire real line $x\in \mathbb R$, we take advantage of the fact that given any $f$ of finite norm $\|f\|_{H^1(\mathbb R)}<+\infty$, and tolerance $\delta$, it is possible to find $R(\delta)>0$ such that $\|f\|_{H^1(|x| \geq R)}<\frac12\delta$. This allows us to restrict \emph{the numerical integration} to a large finite interval $[-R,R]$ (notice that this does not fully address the issue of defining PINNs with boundary conditions at infinity). The same argument applies in the case of time-space norms. The previous requirements are also standard in the literature \cite{MM22}. 

\medskip

\noindent
{\bf Hypotheses on PINNs}.
 Let $u_{\text{DNN},\#}= u_{\text{DNN},\#} (t,x)$ be a smooth bounded complex-valued function constructed by means of an algorithmic procedure (either SGD or any other ML optimization procedure) and realization of a suitable PINNs, in the following sense: under the framework \eqref{Loss1}, one has:
\begin{enumerate}
\item[(H2)] Uniformly bounded large time $L^\infty_tH^1_x$, initial time $H^1_x$ and large time $L^p_t W^{1,q}_x$ control for $u_0$ and $u_{\text{DNN},\#}$.  There are $A,\widetilde A,B>0$ such that the following holds. For any $N_{1,0},N_{2,0},N_{3,0}\geq 1$ and $M_{2,0},M_{3,0}\geq 1$, there are $N_j\geq N_{j,0}$ and $M_j\geq M_{j,0}$ such that each approximate norm $\mathcal J_{H^1,N_1}[(u_0-u_{\text{DNN},\#}(0))]$,  $ \mathcal J_{\infty,H^1,N_2,M_2} [u_{\text{DNN},\#}]$ and $\mathcal J_{p,q,N_3,M_3} [u_{\text{DNN},\#}] $ satisfies
\[
\begin{aligned}
&  \mathcal J_{H^1,N_1}[ (u_0-u_{\text{DNN},\#}(0))]\leq \widetilde{A}, \\
& \mathcal J_{\infty,H^1,N_2,M_2} [u_{\text{DNN},\#}] \leq A, \quad \mathcal J_{p,q,N_3,M_3} [u_{\text{DNN},\#}]  \leq B.
\end{aligned}
\]
\item[(H3)] Small linear $L^p_t W^{1,q}_x$ and nonlinear $L^{p'}_t W^{1,q'}_x$ control. Given $\varepsilon>0$, and given  $N_{4,0},N_{5,0}\geq 1$ and $M_{4,0},M_{5,0}\geq 1$, there are $N_j\geq N_{j,0}$ and $M_j\geq M_{j,0}$ and corresponding approximative norms  $ \mathcal J_{p',q',N_4,M_4} [\mathcal E[u_{\text{DNN},\#}]]$ and $\mathcal J_{p,q,N_5,M_5} [ e^{it\partial_x^2} (u_0-u_{\text{DNN},\#}(0))] $ such that
\[
\begin{aligned}
  \mathcal J_{p',q',N_4,M_4} [\mathcal E[u_{\text{DNN},\#}]] + \mathcal J_{p,q,N_5,M_5} [ e^{it\partial_x^2} (u_0-u_{\text{DNN},\#}(0))] < \varepsilon,
\end{aligned}
\]
with $\mathcal E[u_\#]:= i\partial_t u_\# + \partial_x^2 u_\# + |u_\#|^{\alpha-1}u_\#$. Here $e^{i t \partial_x^2}f(x)$ represents the standard linear Schr\"odinger solution issued of initial data $f$ at time $t$ and position $x$. 
\end{enumerate}
%
%
%
%
Notice that $u_{\text{DNN},\#}$ may not be convergent to zero as time tends to infinity; then it is only required that, given a suitable tolerance $\varepsilon$, hypothesis (H3) is satisfied with computed data in the region $[-R,R]$, and with bounded local control given by (H2). Also, thanks to Strichartz estimates, the linearly motivated bound $ \mathcal J_{p,q,N_5,M_5} [ e^{it\partial_x^2} (u_0-u_{\text{DNN},\#}(0))] $ may be obtained from $\mathcal J_{H^1,N_1} [(u_0-u_{\text{DNN},\#}(0))]\leq \widetilde{A}$ by choosing $\widetilde A$ sufficiently small.

Finally, recall $\mathcal{C}'$ as the Strichartz space introduced in Definition \ref{SS}.

\begin{theorem}\label{MT}
Let $u_0\in H^1(\mathbb R)$, and assume \emph{(H1)}. Let $A,\widetilde A, B>0$ be fixed numbers, and let $0<\varepsilon<\varepsilon_1$ sufficiently small. Finally, let $u_{\emph{DNN},\#}$ be a DNN satisfying \emph{(H2)}-\emph{(H3)}. Then there exists a solution $u\in \mathcal{C}'(I\times \R)$ to~\eqref{eq:NLS-anyalpha} on $I\times \R$ with initial datum $u_0\in H^1(\R)$ such that for all $R>0$ sufficiently large and constants $C$,
\vspace{-0.1cm}
	\begin{align}
		\label{eq:small-alt_0}
		\|u-u_{\emph{DNN},\#}\|_{L^p_t W^{1,q}_x(I\times [-R,R])}&\leq C(A,\widetilde{A},B) \varepsilon,\\
		\label{eq:large-alt_0}
		\|u-u_{\emph{DNN},\#}\|_{\mathcal{C}'(I\times [-R,R])}&\leq C(A,\widetilde{A},B),\\
		\label{eq:H1-alt_0}
		\|u-u_{\emph{DNN},\#}\|_{L^\infty_t H^1_x(I\times [-R,R])}&\leq C(A,\widetilde{A},B).
		\end{align}
		Here $(p,q)$ represents the Strichartz admissible pair defined in~\eqref{eq:pq}.
\end{theorem}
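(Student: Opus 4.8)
The plan is to compare the genuine $H^1$-solution of \eqref{eq:NLS-anyalpha} with the network output through the Duhamel equation satisfied by their difference, and to close the resulting bound by a Strichartz-based continuity argument. Since $u_0\in H^1(\mathbb{R})$ and $2\le\alpha<5$ is $L^2$-subcritical, the global well-posedness theory \cite{GV,Tsutsumi,Cazenave} furnishes a unique solution $u$ of \eqref{eq:NLS-anyalpha} with $u(0)=u_0$ and $u\in\mathcal{S}'(I\times\mathbb{R})$ on the bounded interval $I$; this is the solution asserted in the statement. Writing $v:=u_{\text{DNN},\#}$ and $w:=u-v$, subtracting the identity $\mathcal{E}[v]=i\partial_t v+\partial_x^2 v+|v|^{\alpha-1}v$ from $\mathcal{E}[u]=0$ and applying Duhamel's formula gives
\[
w(t)=e^{it\partial_x^2}w(0)-i\int_0^t e^{i(t-s)\partial_x^2}\Big[\big(|u|^{\alpha-1}u-|v|^{\alpha-1}v\big)(s)+\mathcal{E}[v](s)\Big]\,ds,
\]
with $w(0)=u_0-v(0)$. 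All quantities controlled numerically in (H2)--(H3) are functions of finite Bochner norm, so by the tail-smallness remark following (H1) their continuous norms over $I\times\mathbb{R}$ differ from the discrete norms \eqref{Loss1} by at most $\delta$. Thus (H1)--(H3) translate into $\|w(0)\|_{H^1(\mathbb{R})}\lesssim\widetilde{A}$, $\|v\|_{L^\infty_tH^1_x}\lesssim A$, $\|v\|_{L^p_tW^{1,q}_x}\lesssim B$, and, crucially, $\|\mathcal{E}[v]\|_{L^{p'}_tW^{1,q'}_x}\lesssim\varepsilon$ together with the dispersive smallness $\|e^{it\partial_x^2}w(0)\|_{L^p_tW^{1,q}_x}\lesssim\varepsilon$.

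Next I would apply the retarded Strichartz estimate to the integral term while treating the linear term $e^{it\partial_x^2}w(0)$ explicitly. The mechanism driving the asymmetry between \eqref{eq:small-alt_0} and \eqref{eq:H1-alt_0} is that this linear term is exactly the object controlled by (H3): it has $L^p_tW^{1,q}_x$ norm $\lesssim\varepsilon$, while its $L^\infty_tH^1_x$ norm equals $\|w(0)\|_{H^1}\lesssim\widetilde A$, which is only bounded. The retarded estimate controls the Duhamel integral in both norms by $\|\mathcal{E}[v]\|_{L^{p'}_tW^{1,q'}_x}+\||u|^{\alpha-1}u-|v|^{\alpha-1}v\|_{L^{p'}_tW^{1,q'}_x}$. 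The nonlinear difference is estimated pointwise by $\big||u|^{\alpha-1}u-|v|^{\alpha-1}v\big|\lesssim(|u|^{\alpha-1}+|v|^{\alpha-1})|w|$, with the analogous bound for its spatial derivative from the product and chain rules so as to recover the full $W^{1,q'}_x$ norm; Hölder in space at the pair $(p,q)=(\tfrac{4(\alpha+1)}{\alpha-1},\alpha+1)$ and Hölder in time then give, on any $J\subseteq I$,
\[
\big\||u|^{\alpha-1}u-|v|^{\alpha-1}v\big\|_{L^{p'}_tW^{1,q'}_x(J)}\lesssim |J|^{\frac{5-\alpha}{4}}\big(\|u\|_{L^p_tW^{1,q}_x(J)}^{\alpha-1}+\|v\|_{L^p_tW^{1,q}_x(J)}^{\alpha-1}\big)\|w\|_{L^p_tW^{1,q}_x(J)},
\]
the exponent $\tfrac{5-\alpha}{4}$ being positive exactly because $\alpha<5$.

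The main obstacle is that the prefactor $|I|^{(5-\alpha)/4}B^{\alpha-1}$ need not be small, so the nonlinear term cannot be absorbed on all of $I$ at once; this is resolved by a continuity (bootstrap) argument. Since $u,v\in L^p_tW^{1,q}_x(I)$ with $\|v\|_{L^p_tW^{1,q}_x}\lesssim B$, I would partition $I$ into $K=K(A,B)$ consecutive subintervals on each of which the Strichartz norms of $u$ and $v$ are small enough that the factor $|I_k|^{(5-\alpha)/4}(\|u\|^{\alpha-1}+\|v\|^{\alpha-1})$ falls below the reciprocal of twice the Strichartz constant, so the nonlinear term is absorbed into the left-hand side on each $I_k$. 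Keeping the genuinely small linear term $e^{it\partial_x^2}w(0)$ attached to the \emph{global} Duhamel representation, rather than restarting the propagator at each $t_k$, and iterating the absorbed estimate across the $K$ pieces then yields $\|w\|_{L^p_tW^{1,q}_x(I\times\mathbb{R})}\le C(A,\widetilde A,B)\,\varepsilon$; restricting to $[-R,R]$ only decreases the norm, giving \eqref{eq:small-alt_0}.

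Finally, feeding this bound back into the retarded Strichartz estimate measured in $L^\infty_tH^1_x$ and in the $\mathcal{S}'$ norm, where the initial term contributes the bounded quantity $\widetilde A$ and the nonlinear contribution is now $\lesssim|I|^{(5-\alpha)/4}(A,B)^{\alpha-1}\,\varepsilon$, produces $\|w\|_{L^\infty_tH^1_x}+\|w\|_{\mathcal{S}'}\le C(A,\widetilde A,B)$, i.e.\ \eqref{eq:large-alt_0} and \eqref{eq:H1-alt_0}. The delicate points I expect to need care are the bookkeeping of the accumulation constant over the $K$ subintervals (so that smallness in $L^p_tW^{1,q}_x$ survives the iteration while only boundedness is claimed in $L^\infty_tH^1_x$), and the verification, through the finite-norm/tail-smallness remark after (H1), that numerical control on $[-R,R]$ legitimately upgrades to the global space-time norms required by the Strichartz machinery.
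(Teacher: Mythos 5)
Your analytic core --- Duhamel's formula for the difference, Strichartz estimates, the pointwise bound on the nonlinear difference, H\"older in space--time with the exponent $\tfrac{5-\alpha}{4}>0$, and the partition-plus-bootstrap over $K(A,B)$ subintervals keeping the small linear term attached to the global representation --- is essentially the paper's own argument: it is the content of the short-time stability Lemma~\ref{lemma:stability} iterated into the long-time Lemma~\ref{lemma:stability-long}. However, there is a genuine gap at the very first step of your proof, and it is precisely the difficulty that the unbounded-domain setting creates. You set $v:=u_{\text{DNN},\#}$ and assert that, by the tail-smallness remark following (H1), the discrete quantities in (H2)--(H3) control the continuous norms of $v$, $\mathcal{E}[v]$ and $w(0)$ over all of $I\times\mathbb{R}$. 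That remark goes in the opposite direction: it says that a function \emph{already known} to have finite norm can have its integral well approximated by sums supported in a large compact set. It does not say that sums over collocation points contained in $[-R,R]$ control the behavior of $u_{\text{DNN},\#}$ outside $[-R,R]$. A DNN realization is merely bounded and smooth and in general does not decay as $|x|\to\infty$, so the norms $\|u_{\text{DNN},\#}\|_{L^\infty_tH^1_x(I\times\mathbb{R})}$, $\|u_{\text{DNN},\#}\|_{L^p_tW^{1,q}_x(I\times\mathbb{R})}$ and $\|\mathcal{E}[u_{\text{DNN},\#}]\|_{L^{p'}_tW^{1,q'}_x(I\times\mathbb{R})}$ may all be infinite; the constants $A$, $B$ and the smallness $\varepsilon$ that your Strichartz machinery needs as bounds on continuous norms over the whole line are therefore simply not available, and (H1) cannot be invoked for functions that do not belong to the relevant spaces. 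You flag this upgrade as a ``delicate point'' at the end, but the mechanism you propose for it does not exist.

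The paper closes this gap by modifying the object rather than the estimate. Let $R$ be the maximal modulus of all collocation points, take a cutoff $\eta_R\in C_0^\infty(\mathbb{R})$ with $\eta_R\equiv 1$ on $[-R,R]$ and $\eta_R\equiv 0$ for $|x|\geq 2R$, and set $u_{\#}:=\eta_R\,u_{\text{DNN},\#}$. Then: (i) $u_{\#}$ coincides with $u_{\text{DNN},\#}$ on $I\times[-R,R]$, so every discrete quantity in (H2)--(H3) is unchanged when $u_{\text{DNN},\#}$ is replaced by $u_{\#}$; (ii) $u_{\#}$ is compactly supported in space, hence lies in the spaces to which (H1) applies, and choosing $\delta=\tfrac12\varepsilon$ the discrete bounds upgrade to $\|u_0-u_{\#}(0)\|_{H^1}\leq\widetilde A$, $\|u_{\#}\|_{L^\infty_tH^1_x(I\times\mathbb{R})}\leq A$, $\|u_{\#}\|_{L^p_tW^{1,q}_x(I\times\mathbb{R})}\leq B$, together with the two $\varepsilon$-smallness conditions \eqref{eq:flux-alt}--\eqref{eq:E-alt}. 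One then applies the long-time stability lemma to $u_{\#}$ on $I\times\mathbb{R}$ and obtains \eqref{eq:small-alt_0}--\eqref{eq:H1-alt_0} by restricting to $[-R,R]$, where $u_{\#}=u_{\text{DNN},\#}$. Your bootstrap goes through verbatim once this localization is inserted; without it, the first line of your argument is unjustified and, for a non-decaying network output, false.
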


The previous result indicates that PINNs satisfying the bounds (H2) and (H3) will remain close to the solution of the nonlinear Schr\"odinger equation (NLS) in global norms. In particular, the $L^p_t W^{1,q}_x(I\times [-R,R])$ norm remains small. It is important to note that smallness in the $L^\infty_t H^1_x(I\times [-R,R])$ norm cannot be achieved unless the parameter $\widetilde A$ is chosen to be small. Additionally, since $u_{\text{DNN},\#}$ may not converge to zero as $|x|\to\infty$, the conclusions drawn within the interval $[-R,R]$, with $R$ arbitrary but finite, are optimal in a certain sense.

We also stress that the smallness condition (H3) on $\mathcal J_{p',q',N_4,M_4} [\mathcal E[u_{\text{DNN},\#}]]$ is standard when one studies PINNs. However, the smallness condition on $\mathcal J_{p,q,N_{5},M_{5}} [ e^{it\partial_x^2} (u_0-u_{\text{DNN},\#}(0))]$ is one of the key new ingredients in this paper. This condition indicates the necessity of effectively controlling the \emph{linear evolution} of nearby initial data when evaluated using the PINNs method. This computation is straightforward, as the linear evolution can be efficiently solved with existing numerical methods and the fact that $e^{it\partial_x^2} f$ is given by
$$
\mathcal F_{\xi \to x}^{-1} \left( e^{-i t\xi^2} \mathcal F_{x\to \xi}(f)(\xi)\right) =\frac1{(4i\pi t)^{1/2}} \int_y e^{-|x-y|^2/(4 i t)} f(y)dy,
$$
with $\mathcal F$ the standard Fourier transform. In particular, $\mathcal J_{p,q,N_3,M_3} [ e^{it\partial_x^2} (u_0-u_{\text{DNN},\#}(0))]$ can be easily estimated and computed. Moreover, if $\widetilde A=O(\varepsilon)$, then the bound (H3) on $\mathcal J_{p',q',N_4,M_4} [\mathcal E[u_{\text{DNN},\#}]]$ is naturally satisfied. See Remark \ref{R3p1} for additional details.

An important advantage of Theorem \ref{MT} is that the hypotheses are independent of the specific PINNs considered. This means that both classical and original definitions \cite{LLF,LC,RK,RPK} are as suitable as modern contributions to the field \cite{PLC,PLK}, among others.

Now we will explain how Theorem \ref{MT} is proved. A key element in the proof is a long-time stability property specifically developed for the interaction between deep neural networks (DNNs) and the NLS case. This stability property has been extensively studied in previous years \cite{TV,Tao,Iteam} (and references therein) as a tool to establish global well-posedness for critical equations. In our approach, we have drawn inspiration from this method to extend the PINNs restrictions in (H2)-(H3) to the full solution over the interval $I\times[-R,R]$, $R$ being arbitrary. Furthermore, Theorem \ref{MT} remains valid for any suitable DNN optimization method, other than PINNs, that satisfies conditions (H1)-(H3).

Finally, note that \eqref{eq:H1-alt} may not be satisfactory, since one would like to also achieve a small $L^\infty_t H^1_x$ norm on the error. Ensuring this is naturally challenging due to the presence of the soliton and solitary wave solutions, which evolve over time by transferring energy from compact intervals to infinity. Thus, effective control will require generalizing the standard orbital stability theory to accommodate deep learning techniques. However, we are able to present an interesting preliminary result in this direction, as seen in Corollary \ref{Cor1}. In particular, the finite speed of propagation suggests that the approximation method should work even better for wave than for NLS.


\subsection*{Organization of this work} In Section \ref{sec:WP} we provide some standard well-posedness results for NLS in $H^1$. In Section \ref{section:stability} we prove a short time stability result. We emphasize that the results presented in Section \ref{sec:WP} and Section \ref{section:stability} are fairly standard and may be skipped by expert readers already familiar with the underline theory.  In Section \ref{sec:LTS} we provide the main tool in this paper, a long time stability result. In Section \ref{sec:PMR} we prove the main results in this work. In Section \ref{sec:numerical} we provide numerical computations supporting our findings. Finally, in Section \ref{sec:discussion} we provide a discussion and conclusions.

\subsection*{Acknowledgments} C. M. would like to thank the Erwin Schr\"odinger Institute ESI (Vienna) where part of this work was written. N. V. thanks BCAM members for their hospitality and support during research visits in 2023 and 2024. M. \'A. A. would like to thank the DIM-University of Chile, for its support and hospitality during research stays  while this work was written.
\section{Well-posedness 1D-NLS with focusing nonlinearity: $H^1$-theory}\label{sec:WP}
In this preliminary section we examine the Cauchy problem associated to the 1D-focusing NLS  \eqref{eq:NLS-anyalpha}. As is customary, we first show \emph{local} well-posedness of~\eqref{eq:NLS-anyalpha}, then \emph{global} well-posedness follows as an easy consequence of Gagliardo-Nirenberg inequalities.  
In our first result we show that the problem~\eqref{eq:NLS-anyalpha} is locally well-posed in $H^1(\R)$ for \emph{any} power of the nonlinearity. To present this result, we provide the following preliminary definitions and recall some useful concepts.
\begin{definition}[Admissible pairs]\label{def:adm-pairs}
	We say that a pair of exponents $(p,q)$ is Schr\"odinger admissible if 
	\begin{equation*}
		\frac{2}{p} + \frac{1}{q}=\frac{1}{2},
		\qquad 
			2 \leq q\leq \infty.
	\end{equation*}
\end{definition}
The following estimates are standard in the literature, see e.g. \cite{Tao,LP}.
\begin{lemma}[Strichartz estimates]
Let $(p,q)$ and $(\widetilde{p},\widetilde{q})$ Strichartz admissible pairs (\emph{cf.} Definition~\ref{def:adm-pairs}), then the following estimates hold
\begin{align}
	\label{eq:S1}\tag{$S1$}
	&\|e^{it\Delta} f\|_{L^p_tL^q_x}\lesssim\|f\|_{L^2_x},\\
	\label{eq:S2}\tag{$S2$}
	&\Big\|\int_{\R}e^{i(t-s)\Delta} g(\cdot, s)\, ds\Big\|_{L^p_tL^q_x}\lesssim\|g\|_{L^{p'}_tL^{q'}_x},
	\qquad \left(\Big\|\int_0^te^{i(t-s)\Delta} g(\cdot, s)\, ds\Big\|_{L^p_tL^q_x}\lesssim\|g\|_{L^{p'}_tL^{q'}_x} \right),\\
	\label{eq:S3}\tag{$S3$}
	&\Big\|\int_{\R}e^{it\Delta} g(\cdot, t)\, dt\Big\|_{L^2_x}\lesssim\|g\|_{L^{p'}_tL^{q'}_x}.
\end{align}
Finally 
\begin{equation}\label{eq:S4} \tag{$S4$}
	\Big\|\int_0^t e^{i(t-s)\Delta} g(\cdot, s)\, ds\Big\|_{L^p_tL^q_x}\lesssim \|g\|_{L^{\widetilde{p}'}_t L^{\widetilde{q}'}_x}.
\end{equation}
\end{lemma}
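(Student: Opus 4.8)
The plan is to derive all four inequalities from the single fixed-time \emph{dispersive estimate} for the free propagator together with the abstract $TT^\ast$ method, treating \eqref{eq:S1} as the fundamental bound and obtaining \eqref{eq:S2}, \eqref{eq:S3}, \eqref{eq:S4} by duality and composition. Concretely, I would set $T\colon L^2_x\to L^p_tL^q_x$, $Tf:=e^{it\Delta}f$, so that its formal adjoint is $T^\ast g=\int_{\R}e^{-is\Delta}g(\cdot,s)\,ds$ and $TT^\ast g=\int_{\R}e^{i(t-s)\Delta}g(\cdot,s)\,ds$. By the standard functional-analytic equivalence, boundedness of $T$ (i.e.\ \eqref{eq:S1}), of $T^\ast\colon L^{p'}_tL^{q'}_x\to L^2_x$ (i.e.\ \eqref{eq:S3}), and of $TT^\ast\colon L^{p'}_tL^{q'}_x\to L^p_tL^q_x$ (i.e.\ the non-retarded form of \eqref{eq:S2}) are all equivalent, so it suffices to bound $TT^\ast$.

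The first genuine step is the decay estimate. Combining mass conservation $\|e^{it\Delta}f\|_{L^2_x}=\|f\|_{L^2_x}$ with the dispersive bound $\|e^{it\Delta}f\|_{L^\infty_x}\lesssim |t|^{-n/2}\|f\|_{L^1_x}$ and interpolating (Riesz--Thorin) yields, for each fixed time, $\|e^{it\Delta}f\|_{L^q_x}\lesssim |t|^{-\sigma}\|f\|_{L^{q'}_x}$ with $\sigma:=n(\tfrac12-\tfrac1q)$. Inserting this into $TT^\ast$ and pulling the $L^q_x$ norm inside the integral gives $\|TT^\ast g(t)\|_{L^q_x}\lesssim \int_{\R}|t-s|^{-\sigma}\|g(s)\|_{L^{q'}_x}\,ds$, so the $L^p_t$ bound reduces to one-dimensional fractional integration in time. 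The Hardy--Littlewood--Sobolev inequality then closes the estimate precisely when $\tfrac2p=\sigma$, i.e.\ when $\tfrac2p+\tfrac nq=\tfrac n2$, which is exactly the admissibility condition of Definition~\ref{def:adm-pairs}; moreover for $n=1$ one checks $\sigma=\tfrac12-\tfrac1q\in(0,1)$ for all $2<q\le\infty$, so the strong-type HLS applies throughout, while the remaining case $q=2$ (i.e.\ $p=\infty$) is the trivial $L^\infty_tL^2_x$ unitarity bound. This yields \eqref{eq:S1} and, by the duality above, \eqref{eq:S3} and the non-retarded form of \eqref{eq:S2}.

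For \eqref{eq:S4} I would use two \emph{different} admissible pairs: writing $\int_{\R}e^{i(t-s)\Delta}g(\cdot,s)\,ds=T\big(\widetilde T^{\,\ast}g\big)$, where $T$ is built from $(p,q)$ and $\widetilde T^{\,\ast}$ from $(\widetilde p,\widetilde q)$, one bounds $\|T(\cdot)\|_{L^p_tL^q_x}\lesssim\|\widetilde T^{\,\ast}g\|_{L^2_x}$ by \eqref{eq:S1} and $\|\widetilde T^{\,\ast}g\|_{L^2_x}\lesssim\|g\|_{L^{\widetilde p'}_tL^{\widetilde q'}_x}$ by \eqref{eq:S3}, giving the full-line version. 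The last point is to pass from the non-retarded integrals over $\R$ to the \emph{retarded} ones over $[0,t]$ appearing in the parenthetical form of \eqref{eq:S2} and in \eqref{eq:S4}: this is precisely the content of the Christ--Kiselev lemma, which upgrades a non-retarded bound to its truncated counterpart whenever the output time-exponent $p$ strictly exceeds the input time-exponent ($p'$ for \eqref{eq:S2}, $\widetilde p'$ for \eqref{eq:S4}); since in dimension one every admissible pair has $p\ge4$, this strict inequality always holds.

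The main obstacle is conceptual rather than computational: unlike the higher-dimensional theory, the delicate Keel--Tao endpoint ($q=\tfrac{2n}{n-2}$) does not arise here, since for $n=1$ the entire admissible range lies strictly inside the HLS window or reduces to the trivial $L^\infty_tL^2_x$ estimate. The care therefore concentrates on (i) justifying the $TT^\ast$ duality together with the Fubini/Minkowski interchanges that convert $TT^\ast$ into a time fractional integral, and (ii) correctly invoking the Christ--Kiselev lemma for the retarded bounds, where the strict inequality between time-exponents must be available. As this is a standard \emph{recalled} result, one may alternatively simply cite the classical references for the $TT^\ast$/HLS argument and for Christ--Kiselev.
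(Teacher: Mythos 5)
The paper states this lemma without proof, as a recollection of classical results, so there is no internal argument to compare against; your proposal is precisely the standard derivation (fixed-time dispersive estimate plus $TT^\ast$ duality, Hardy--Littlewood--Sobolev in time for the diagonal bound, composition of \eqref{eq:S1} and \eqref{eq:S3} for \eqref{eq:S4}, and Christ--Kiselev for the retarded integrals), and it is correct, including the one-dimensional specifics: every admissible pair has $p\geq 4$, so the Christ--Kiselev condition $p>\widetilde{p}'$ always holds, and the Keel--Tao endpoint never arises since $\sigma=\tfrac12-\tfrac1q\leq\tfrac12<1$, with $(p,q)=(\infty,2)$ reducing to unitarity. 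One cosmetic remark: for the \emph{diagonal} retarded estimate in \eqref{eq:S2} you do not actually need Christ--Kiselev, since the truncated kernel $|t-s|^{-\sigma}\chi_{\{0<s<t\}}$ is dominated pointwise by the full kernel before Hardy--Littlewood--Sobolev is applied, so the lemma is only essential for the off-diagonal bound \eqref{eq:S4}.
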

\begin{definition}[Strichartz space]\label{SS}
Let $I:=[-T,T].$ We define the $\mathcal{C}'(I\times\R)$ Strichartz norm by
\begin{equation*}
	\|u\|_{\mathcal{C}'(I\times\R)}
	=\|u\|_{\mathcal{C}'}
	:=\sup\|u\|_{L^p_t W^{1,q}_x(I\times \R)},
\end{equation*}
where the supremum is taken over all 1D admissible pairs $(p,q).$
The associated Strichartz space $\mathcal{C}'(I\times\R)$ is defined as the closure of the test functions under this norm.
\end{definition}  
\begin{theorem}[Local well-posedness]\label{thm:LWP}
	If $1<\alpha<\infty,$ then for all $u_0\in H^1(\R)$ there exists $T=T(\|u_0\|_{H^1(\R)},\alpha)>0$ and a unique solution $u$ of~\eqref{eq:NLS-anyalpha} in $\mathcal{C}'(I\times \R).$
\end{theorem}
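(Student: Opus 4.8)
The plan is to solve \eqref{eq:NLS-anyalpha} by a contraction-mapping argument applied to the Duhamel formulation. Writing the nonlinearity as $F(u):=|u|^{\alpha-1}u$, a function $u$ is a solution with datum $u_0$ precisely when it is a fixed point of
\[
\Phi(u)(t):=e^{it\partial_x^2}u_0+i\int_0^t e^{i(t-s)\partial_x^2}F(u(s))\,ds .
\]
Fixing the admissible pair $(p,q)=\bigl(\tfrac{4(\alpha+1)}{\alpha-1},\alpha+1\bigr)$ and $I=[-T,T]$, I would look for the fixed point in the closed ball
\[
X_{M,T}:=\bigl\{\,u:\ \|u\|_{L^\infty_t H^1_x(I\times\R)}+\|u\|_{\mathcal{S}'(I\times\R)}\leq M\,\bigr\},
\]
with radius $M\sim\|u_0\|_{H^1}$ to be tuned, equipped with the \emph{weaker} distance $d(u,v):=\|u-v\|_{L^p_tL^q_x(I\times\R)}$; a standard weak-limit argument shows this ball is complete for $d$. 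The homogeneous estimate \eqref{eq:S1} together with its $W^{1,q}$ version bounds the linear part by $\|e^{it\partial_x^2}u_0\|_{\mathcal{S}'}\lesssim\|u_0\|_{H^1}$.

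The core is the nonlinear estimate, controlling $\|\Phi(u)-e^{it\partial_x^2}u_0\|_{\mathcal{S}'}$ through \eqref{eq:S4}. Since $q'=\tfrac{\alpha+1}{\alpha}$ satisfies $\alpha q'=q$ and $\partial_x F(u)$ is pointwise comparable to $|u|^{\alpha-1}\partial_x u$, Hölder in space yields
\[
\|F(u)\|_{W^{1,q'}_x}\lesssim \|u\|_{L^q_x}^{\alpha-1}\,\|u\|_{W^{1,q}_x}.
\]
Bounding the low-order factor by $\|u\|_{L^q_x}^{\alpha-1}\lesssim\|u\|_{H^1_x}^{\alpha-1}$ (here the one-dimensional embedding $H^1(\R)\hookrightarrow L^q(\R)$ for every $q\in[2,\infty]$ is what permits \emph{any} finite power $\alpha$), then applying Hölder in time, I would gain a positive power of $T$:
\[
\Bigl\|\int_0^t e^{i(t-s)\partial_x^2}F(u)\,ds\Bigr\|_{\mathcal{S}'}\lesssim T^{\theta}\,\|u\|_{L^\infty_tH^1_x}^{\alpha-1}\,\|u\|_{\mathcal{S}'}\lesssim T^{\theta}M^{\alpha},\qquad \theta=1-\tfrac{2}{p}>0.
\]
Choosing $M=2C\|u_0\|_{H^1}$ and then $T=T(\|u_0\|_{H^1},\alpha)$ small enough that $CT^{\theta}M^{\alpha-1}\leq\tfrac12$ makes $\Phi$ map $X_{M,T}$ into itself.

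For the contraction and uniqueness I would estimate $\Phi(u)-\Phi(v)$ in the metric $d$ using \eqref{eq:S2} (no derivatives) and the pointwise bound $|F(u)-F(v)|\lesssim(|u|^{\alpha-1}+|v|^{\alpha-1})|u-v|$, obtaining $d(\Phi(u),\Phi(v))\lesssim T^{\theta}M^{\alpha-1}d(u,v)$, a contraction for the same small $T$. The Banach fixed-point theorem then produces the unique $u\in\mathcal{S}'(I\times\R)$. The main obstacle is precisely the low regularity of $F$: when $\alpha-1$ is not an even integer, $F$ is not smooth at the origin, so one cannot close the argument by differentiating $F(u)-F(v)$ in the strong $\mathcal{S}'$ norm. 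This forces the two-norm scheme above — iterating in a ball bounded in the strong $\mathcal{S}'\cap L^\infty_tH^1_x$ norm while contracting in the weaker $L^p_tL^q_x$ distance — together with the verification that the ball remains complete for the weak distance.
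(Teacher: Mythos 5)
Your proposal is correct, and its skeleton coincides with the paper's: Duhamel formulation, contraction mapping, the same admissible pair $(p,q)=\bigl(\tfrac{4(\alpha+1)}{\alpha-1},\alpha+1\bigr)$, H\"older in space exploiting $\alpha q'=q$, the one-dimensional embedding $H^1(\R)\hookrightarrow L^{q}(\R)$, and a H\"older-in-time gain of a positive power of $T$. The genuine difference is how the contraction is closed. The paper proceeds through a cascade of three spaces: first $L^\infty_tH^1_x$ alone (using only $H^1(\R)\hookrightarrow L^\infty(\R)$, Minkowski and unitarity, with no Strichartz input), then $L^\infty_tH^1_x\cap L^p_tW^{1,q}_x$, then the 1D endpoint $L^4_tW^{1,\infty}_x$ via~\eqref{eq:S4} together with the interpolation $\|\nabla u\|_{L^{\alpha+1}_x}\leq\|\nabla u\|_{L^\infty_x}^{(\alpha-1)/(\alpha+1)}\|\nabla u\|_{L^2_x}^{2/(\alpha+1)}$, and finally recovers the full $\mathcal{S}'$ norm by interpolating between the endpoint pairs $(\infty,2)$ and $(4,\infty)$; the contraction property itself is only asserted (``can be done similarly''). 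You instead estimate all pairs at once through~\eqref{eq:S4} and, crucially, run the fixed point as a two-norm (Kato-type) scheme: a ball bounded in the strong $\mathcal{S}'\cap L^\infty_tH^1_x$ norm, with contraction in the derivative-free distance $\|u-v\|_{L^p_tL^q_x}$. This is more than a stylistic choice: for $1<\alpha<2$ the derivative of $s\mapsto|s|^{\alpha-1}s$ is only H\"older continuous, so the difference estimate for $\nabla\bigl(F(u)-F(v)\bigr)$ needed to contract in a $W^{1,q}$-based norm genuinely fails, and your weak-metric contraction (plus the completeness of the ball under the weak distance) is the standard repair --- consistent with the paper's own footnote conceding ``technical issues'' when $\alpha<2$. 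What the paper's route buys is an elementary first step and an explicit display of the endpoint structure; what yours buys is a contraction argument valid for every $\alpha\in(1,\infty)$ as the theorem actually claims. The one point you should still make explicit is that the Strichartz constants can be taken uniform over all 1D admissible pairs (by interpolating between the two endpoint estimates), so that taking the supremum defining $\|\cdot\|_{\mathcal{S}'}$ on the left-hand side of your estimates is legitimate.
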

\begin{proof}
	Let $X_T$ denote the space where we will perform the fixed point argument and that will be determined below. The subscript in the notation $X_T$ emphasizes that the space will always run on a fixed period of time $I=[-T,T]$, with $T>0$. For all positive constants $T$ and $a$ we define $B_T(0,a)=\{u\in X_T\colon \|u\|_{X_T} \leq a\}.$ For appropriate values of $a$ and $T>0$ we shall show that 
	\begin{equation*}
		\Phi(u)(t):=e^{it \partial_x^2} u_0 +i\int_0^t e^{i(t-s)\partial_x^2}|u|^{\alpha-1}u\, ds,
	\qquad t\in I
	\end{equation*}	 
	defines a contraction map on $B_T(0,a).$ In particular, if $\Phi$ is proven to have the contracting property in $X_T=\mathcal{C}'(I\times \R),$ then the result follows. To show that we proceed in three steps. We start considering as $X_T$ the energy space $X_T=L^\infty_t H^1_x(I\times \R).$
	\begin{description}
		\item [\underline{Case $X_T=L^\infty_t H^1_x(I\times \R)$}] Due to the embedding $H^1(\R)\hookrightarrow L^\infty(\R)$ valid in the one dimensional framework, closing the fixed point argument in $X_T$ is almost immediate. Indeed
		\begin{equation*}
			\begin{split}
			\|\Phi(u)\|_{L^\infty_t H^1_x}
			&\leq \|e^{it\partial_x^2}u_0\|_{L^\infty_t H^1_x}
			+ \Bigg\|\int_0^t e^{i(t-s)\partial_x^2}|u|^{\alpha-1}u\, ds\Bigg\|_{L^\infty_t H^1_x}\\
			&=\|u_0\|_{H^1_x} 
			+ \Bigg\| \Big\| \int_0^t e^{i(t-s)\partial_x^2} |u|^{\alpha-1}u\, ds \Big\|_{H^1_x}\Bigg\|_{L^\infty_t}
			\end{split},
		\end{equation*}
		where here we just used that $e^{it\partial_x^2}$ is unitary in $H^1(\R).$ 
		Using Minkowsky, the unitarity of $e^{it\partial_x^2}$ and again the Sobolev embedding $H^1(\R)\hookrightarrow L^\infty(\R)$ one has
		\begin{equation*}
			\begin{split}
				\Big\| \int_0^t e^{i(t-s)\partial_x^2} |u|^{\alpha-1}u\, ds \Big\|_{H^1_x}
				&\leq \int_0^t \| |u|^{\alpha-1} u\|_{H^1_x}\, ds\\
				&\leq \int_0^t \|u\|_{H^1_x} \|u\|_{L^\infty}^{\alpha-1}\, ds\lesssim \int_0^t \|u\|_{H^1_x}^\alpha\, ds.
			\end{split}
		\end{equation*} 
		Using the latter in the former one gets
		\begin{equation*}
			\|\Phi(u)\|_{L^\infty_t H^1_x}
			\lesssim 
			\|u_0\|_{H^1_x} + T \|u\|_{L^\infty_t H^1_x}^\alpha.
		\end{equation*}
		If we assume $u\in B_T(0, a)$	in $X_T=L^\infty_t H^1_x,$ namely $\|u\|_{L^\infty_t H^1_x}\leq a$ and choosing $a=2c\|u_0\|_{H^1_x},$ where $c$ from now on will denote the constant hidden in the symbol $\lesssim,$ then
		\begin{equation*}
			\|\Phi(u)\|_{L^\infty_tH^1_x}
			\leq \frac{a}{2} + Ta^\alpha. 
		\end{equation*} 
		If $T$ is sufficiently small, namely $Ta^{\alpha-1}\leq 1/2,$ then $\Phi\colon B_T(0,a)\to B_T(0,a).$ Proving that $\Phi$ is a contraction can be done similarly.
		
\medskip

		\item[\underline{Case $X_T=L^\infty_t H^1_x \cap L^\frac{4(\alpha+1)}{\alpha-1}_tW^{1, \alpha +1}_x$}]\footnote{The space $L^\frac{4(\alpha+1)}{\alpha-1}_tL^{\alpha +1}_x$ is the generalisation of $L^8_tL^4_x$ for the cubic nonlinearity $\alpha=3.$} To close the fixed point argument in this case we  only need  to estimate the $L^\frac{4(\alpha+1)}{\alpha-1}_tW^{1, \alpha +1}_x$-norm of the Duhamel's term. We shall see the estimate for $\partial_x u,$  the estimate for $u$ is performed similarly. Using~\eqref{eq:S2} and H\"older in space one has
		\begin{equation*}
			\begin{split}
			\Big\|\int_0^t e^{it\partial_x^2} |u|^{\alpha-1}\partial_x u\, ds\Big\|_{L^\frac{4(\alpha+1)}{\alpha-1}_tL^{\alpha +1}_x}
			&\lesssim
			\||u|^{\alpha-1}\partial_x u\|_{L^\frac{4(\alpha+1)}{3\alpha+5}_tL^\frac{\alpha +1}{\alpha}_x}\\
			&\leq 
			\Big\| \|\partial_x  u\|_{L^{\alpha+1}_x} \|u\|_{L^{\alpha + 1}_x}^{\alpha-1} \Big\|_{L^\frac{4(\alpha+1)}{3\alpha+5}_t}.
			\end{split}
		\end{equation*}
		Notice that since $\alpha+1>2,$ then $H^1_x(\R)\hookrightarrow L^{\alpha +1}_x(\R).$ In particular, $\|u\|_{L^{\alpha+1}_x}\leq \|u\|_{H^1}.$ Using H\"older in time gives
		\begin{equation*}
			\begin{split}
				\Big\| \|\partial_x  u\|_{L^{\alpha+1}_x} \|u\|_{L^{\alpha + 1}_x}^{\alpha-1} \Big\|_{L^\frac{4(\alpha+1)}{3\alpha+5}_t}
				&\leq \|u\|_{L^\infty_t H^1_x}^{\alpha-1} \|\partial_x  u\|_{L^\frac{4(\alpha+1)}{3\alpha+5} L^{\alpha+1}_x}\\
				&\leq T^\delta \|u\|_{L^\infty_t H^1_x}^{\alpha-1} \|\partial_x  u\|_{L^\frac{4(\alpha+1)}{\alpha-1} L^{\alpha+1}_x}\\
				&\leq T^\delta \|u\|_{X_T}^\alpha,
			\end{split}
		\end{equation*}
		here $\delta$ is the H\"older conjugate exponent, namely
		\begin{equation*}
			\frac{3\alpha + 5}{4(\alpha+1)}= \frac{\alpha-1}{4(\alpha+1)} + \frac{1}{\delta}.
		\end{equation*}
		Notice that $\delta>0.$ This, with a similar reasoning as above, closes the fixed point argument.
		This intermediate step involving the Strichartz space $L^\frac{4(\alpha+1)}{\alpha-1}_t L^{\alpha +1}_x$ is needed to treat the 1D admissible endpoint case which is object of the next step.
		
\medskip	
	
		\item[\underline{Case $X_T=L^\infty_tH^1_x\cap L^4_t W^{1,\infty}_x$}] As above we need an estimate only for the Duhamel term. In order to do that we use the Strichartz estimate~\eqref{eq:S4} with $(\widetilde{p}, \widetilde{q})=(4(\alpha+1)/(3\alpha + 5), (\alpha+1)/\alpha).$ More precisely, using~\eqref{eq:S4} one has
		\begin{equation}\label{eq:end-point}
			\begin{split}
			\Big\| \int_0^t e^{i(t-s)\partial_x^2} |u|^{\alpha-1} \partial_x  u\, ds \Big\|_{L^4_t L^\infty_x}
			&\lesssim \||u|^{\alpha-1}\partial_x  u\|_{L_t^\frac{4(\alpha +1)}{3\alpha + 5} L^\frac{\alpha+1}{\alpha}_x}\\
			&\lesssim T^\delta \|u\|_{L^\infty_t H^1_x}^{\alpha-1} \|\partial_x  u\|_{L^\frac{4(\alpha+1)}{\alpha-1}_t L^{\alpha+1}_x},
			\end{split}
		\end{equation}  
		where the last inequality follows from the estimate obtained in the previous step. Now to conclude we want to use interpolation: let $\theta\in (0,1),$ then
		\begin{equation*}
			\begin{split}
				\|\partial_x  u\|_{L^{\alpha+1}_x}
				&= \Big( \int_{\R}|\partial_x  u|^{(\alpha+1)\theta} |\partial_x  u|^{(\alpha+1)(1-\theta)} \Big)^{1/(\alpha+1)}\\
				&\leq \|\partial_x  u\|_{L^\infty_x}^\frac{\alpha-1}{\alpha+1} \|\partial_x  u\|_{L^2_x}^\frac{2}{\alpha+1},
			\end{split}
		\end{equation*}
		where in the last estimate we used H\"older inequality and we chose $\theta=2/(\alpha+1).$ Using the latter to estimate $\|\partial_x  u\|_{L^\frac{4(\alpha+1)}{\alpha-1}_t L^{\alpha+1}_x}$ in~\eqref{eq:end-point} one has
		\begin{equation*}
			\begin{split}
			\|\partial_x  u\|_{L^\frac{4(\alpha+1)}{\alpha-1}_t L^{\alpha+1}_x}
			&\leq \Big \| \|\partial_x  u\|_{L^\infty_x}^\frac{\alpha-1}{\alpha+1} \|\partial_x  u\|_{L^2_x}^\frac{2}{\alpha+1}\Big \|_{L^\frac{4(\alpha +1)}{\alpha-1}_t}\\
			&\leq \|\partial_x  u\|_{L^\infty_t L^2_x}^\frac{2}{\alpha +1} \|\partial_x  u\|_{L^4_t L^\infty_x}^\frac{\alpha-1}{\alpha +1} \leq \|u\|_{X_T}.
			\end{split}
		\end{equation*}
		Using the last estimate in~\eqref{eq:end-point} we easily close the fixed point argument.
		
		In order to prove the theorem one observes that the pairs $(\infty, 2)$ and $(4, \infty)$ represent the endpoints of 1D admissible pairs $(p, q).$ Thus, the conclusion of the theorem simply follows from the interpolation inequality.   
		\end{description}
\end{proof}

\section{Short time stability}\label{section:stability}

\subsection{Preliminaries}
We need the following preliminary definition.
\begin{definition}[Approximate solution]\label{def:appr}
	We say that $u_{\#}$ is an approximate solution to \eqref{eq:NLS-anyalpha} if $u_{\#}$ satisfies the perturbed equation
	\begin{equation}\label{eq:perturbed}
	i \partial_t u_{\#} + \partial_x^2 u_{\#} + |u_{\#}|^{\alpha-1} u_{\#}= \mathcal{E}[u_{\#}],
	\qquad 2\leq\alpha<5,
\end{equation} 
for some error function $\mathcal{E}.$
\end{definition}
In this section we aim to develop a \emph{stability theory} for the NLS equation~\eqref{eq:NLS-anyalpha}: We will demonstrate that an approximate solution to the NLS equation~\eqref{eq:NLS-anyalpha} (in the sense of Definition~\ref{def:appr}) does not significantly deviate from the actual solution if both the error function $\mathcal{E}$ and the initial data error are small in a suitable sense. It is important to note that this result generalizes the property of continuous dependence on initial data, which corresponds to the special case where $\mathcal{E}=0$, as well as the uniqueness property, which corresponds to the case $\mathcal{E}=0$ and zero initial data error, namely $u(0)=u_{\#}(0).$

\subsection{Short time stability}
As a first stability result we show that we can prove the existence of an exact solution $u$ to~\eqref{eq:NLS-anyalpha} close enough to our approximate solution $u_{\#}$ of~\eqref{eq:perturbed}, even allowing $u_0-u_{\#}(0)$ to have large energy, provided that the error $\mathcal{E}[u_{\#}]$ of the near solution, the near solution $u_{\#}$ itself and the free evolution of the perturbation $u-u_{\#}$ are small in suitable space-time norms. The precise statement is contained in the next lemma.
\begin{lemma}[Short-time stability]
\label{lemma:stability}
	Let $I=[-T,T]$ be a fixed compact interval containing zero and assume $(0<)T\leq 1.$ Let $u_{\#}$ be an approximate solution to~\eqref{eq:NLS-anyalpha} on $I\times \R$ in the sense of Definition~\ref{def:appr}. Assume that 
		$\|u_{\#}\|_{L^\infty_t H^1_x(I\times \R)}\leq A,$
	for some constant $A>0.$
	Let $u_0\in H^1(\R)$ such that 
	\[
		\|u_0-u_{\#}(0)\|_{H^1(\R)}\leq \widetilde{A},
	\]
	for some $\widetilde{A}>0.$
	 Assume the smallness conditions
	\begin{align}
		\label{eq:ug}
		\|u_{\#}\|_{L^p_t W^{1,q}_x(I\times \R)}&\leq \varepsilon_0,\\
		\label{eq:flux}
		\|e^{it\partial_x^2} (u_0-u_{\#}(0))\|_{L^p_t W^{1,q}_x(I\times \R)}&\leq \varepsilon,\\
		\label{eq:E}
		\left\|\mathcal{E}[u_{\#}]\right\|_{L^{p'}_t W^{1,q'}_x(I\times \R)}&\leq \varepsilon,
	\end{align}
	for some $0<\varepsilon\leq \varepsilon_0,$ with $\varepsilon_0=\varepsilon_0(A, \widetilde{A}, T)$ a sufficiently small constant and where $(p,q)$ is as in~\eqref{eq:pq}. 
	
	Then there exists a solution $u\in \mathcal{C}'(I\times \R)$ to~\eqref{eq:NLS-anyalpha} on $I\times \R$ with initial datum $u_0\in H^1(\R)$ such that 
	\begin{align}
		\label{eq:small}
		\|u-u_{\#}\|_{L^p_t W^{1,q}_x(I\times \R)}&\lesssim \varepsilon,\\
		\label{eq:S}
		\|(i\partial_t + \partial_x^2)(u-u_{\#}) + \mathcal{E}[u_{\#}]\|_{L^{p'}_t W^{1,q'}_x(I\times \R)}&\lesssim \varepsilon\\
		\label{eq:large}
		\|u-u_{\#}\|_{\mathcal{C}'(I\times \R)}&\lesssim \widetilde{A} + \varepsilon.
	\end{align}
	In particular, from~\eqref{eq:large} one has  $\|u-u_{\#}\|_{L^\infty_t H^1_x(I\times \R)}\lesssim \widetilde{A} + \varepsilon.$
\end{lemma}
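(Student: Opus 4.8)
\emph{The plan.} The idea is to recast the statement as a fixed-point problem for the difference $w := u - u_\#$ and to solve it by contraction in the Strichartz space $X := L^p_t W^{1,q}_x(I\times\R)$, exploiting the subcritical relation between the exponents. Writing $F(z):=|z|^{\alpha-1}z$ and $N(w):=F(u_\#+w)-F(u_\#)$, subtracting the perturbed equation~\eqref{eq:perturbed} from~\eqref{eq:NLS-anyalpha} shows that $w$ must solve $i\partial_t w + \partial_x^2 w + N(w) = -\mathcal{E}[u_\#]$ with $w(0)=u_0-u_\#(0)$. By Duhamel's formula this is equivalent to the fixed-point identity
\[
w = \Gamma(w) := e^{it\partial_x^2}\big(u_0-u_\#(0)\big) + i\int_0^t e^{i(t-s)\partial_x^2}\big(N(w) + \mathcal{E}[u_\#]\big)\,ds .
\]
The linear term is precisely the free evolution appearing in~\eqref{eq:flux}, so it already has $X$-norm at most $\varepsilon$.

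\emph{The nonlinear estimate.} The heart of the matter is to prove $\|N(w)\|_{L^{p'}_t W^{1,q'}_x} \lesssim (\|u_\#\|_X+\|w\|_X)^{\alpha-1}\|w\|_X$, together with its difference version $\|N(w_1)-N(w_2)\|_{L^{p'}_t W^{1,q'}_x}\lesssim (\|u_\#\|_X+\|w_1\|_X+\|w_2\|_X)^{\alpha-1}\|w_1-w_2\|_X$. For the spatial part I would expand $\nabla N(w)=F'(u_\#+w)\nabla w + \big(F'(u_\#+w)-F'(u_\#)\big)\nabla u_\#$, use the pointwise bounds $|F'(z)|\sim|z|^{\alpha-1}$ and $|F'(z_1)-F'(z_2)|\lesssim(|z_1|+|z_2|)^{\alpha-2}|z_1-z_2|$, and then apply Hölder in $x$ with the exponent identity $1/q' = (\alpha-1)/q + 1/q$, valid since $q=\alpha+1$. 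For the time integration one applies Hölder with the gap $\sigma := 1/p' - \alpha/p = (5-\alpha)/4$, which is strictly positive exactly in the subcritical range $2\le\alpha<5$ and vanishes at the critical exponent $\alpha=5$; this produces a harmless factor $|I|^{\sigma}$ on the fixed interval $I$ and the crucial prefactor $(\|u_\#\|_X+\|w\|_X)^{\alpha-1}$, which will be made small through $\varepsilon_0$.

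\emph{Closing the argument.} With these estimates in hand I would run the contraction on the ball $\{\|w\|_X\le K\varepsilon\}$ for a suitable $K$. Using~\eqref{eq:S2} for the Duhamel term, \eqref{eq:flux} for the linear term and~\eqref{eq:E} for the error, one obtains $\|\Gamma(w)\|_X \le C\varepsilon + C(\varepsilon_0+\|w\|_X)^{\alpha-1}\|w\|_X$; since $\varepsilon\le\varepsilon_0$ and $\varepsilon_0=\varepsilon_0(A,\widetilde A)$ is chosen small, both the self-mapping and the contraction constants become favorable, yielding a unique fixed point with $\|w\|_X\lesssim\varepsilon$, which is~\eqref{eq:small}. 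Estimate~\eqref{eq:S} is then immediate, since the equation gives $(i\partial_t+\partial_x^2)(u-u_\#)+\mathcal{E}[u_\#] = -N(w)$, whose $L^{p'}_t W^{1,q'}_x$ norm is already controlled by $\lesssim\varepsilon$. Finally, for the full Strichartz norm~\eqref{eq:large} I would reinsert the fixed point into the Duhamel formula and estimate $\|w\|_{L^{\tilde p}_t W^{1,\tilde q}_x}$ over an arbitrary admissible pair $(\tilde p,\tilde q)$: the inhomogeneous part is handled by~\eqref{eq:S4} through $\|N(w)+\mathcal{E}[u_\#]\|_{L^{p'}_t W^{1,q'}_x}\lesssim\varepsilon$, while the linear part is now bounded only by~\eqref{eq:S1} as $\|u_0-u_\#(0)\|_{H^1}\le\widetilde A$ (it is $\varepsilon$-small solely in the single pair $(p,q)$, which is what distinguishes~\eqref{eq:small} from~\eqref{eq:large}). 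Taking the supremum over admissible pairs gives $\|w\|_{\mathcal{S}'}\lesssim\widetilde A+\varepsilon$, and the endpoint pair $(\infty,2)$ yields the stated $L^\infty_t H^1_x$ control.

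\emph{Main obstacle.} I expect the delicate point to be the spatial $W^{1,q'}$ bound on the nonlinear difference $N(w)$: differentiating $F(u_\#+w)-F(u_\#)$ forces a comparison of $F'(u_\#+w)\nabla(u_\#+w)$ with $F'(u_\#)\nabla u_\#$, which requires Hölder continuity of $F'(z)\sim|z|^{\alpha-1}$. This is exactly what the hypothesis $\alpha\ge2$ guarantees, and it is the source of the footnoted difficulty for $\alpha<2$, where $F'$ is only Hölder-$(\alpha-1)$ continuous and the Leibniz-type expansion must be rearranged. The remaining bookkeeping of the several Hölder exponents, while routine, is the most error-prone part of the computation.
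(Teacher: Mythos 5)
Your proposal is correct, and its analytic core coincides with the paper's: Duhamel's formula for $v=u-u_{\#}$, the Strichartz estimates \eqref{eq:S1}, \eqref{eq:S2}, \eqref{eq:S4}, pointwise bounds on $\nabla[f(u_{\#}+v)-f(u_{\#})]$ valid for $\alpha\ge 2$, and the space--time H\"older estimate exploiting the subcritical gap $1/p'-\alpha/p=(5-\alpha)/4>0$ (your $\sigma$ is the paper's $\delta$). The logical scaffolding, however, is genuinely different. The paper never runs a contraction: it invokes Theorem~\ref{thm:LWP} to take the existence of $u\in\mathcal{S}'$ for granted, and proves \eqref{eq:small}--\eqref{eq:large} as \emph{a priori} bounds by introducing $S(T):=\|f(u_{\#}+v)-f(u_{\#})\|_{L^{p'}_tW^{1,q'}_x([0,T]\times\R)}$ and closing the self-referential inequality $S(T)\lesssim_T\varepsilon_0(S(T)+\varepsilon)^{\alpha-1}+\varepsilon_0^{\alpha-1}(S(T)+\varepsilon)+(S(T)+\varepsilon)^{\alpha}$ through a continuity argument in $T$. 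Your Banach fixed point buys self-containedness---existence and uniqueness of $w$ in the ball come for free, at the price of also proving the difference estimate for $N(w_1)-N(w_2)$, which the paper never has to write down---whereas the paper's bootstrap is shorter given the well-posedness theory of Section~\ref{sec:WP}, and, more importantly, isolates exactly the quantity $S(T)$ that is recycled in the iteration proving the long-time stability Lemma~\ref{lemma:stability-long} (the inductive bounds $S(t_l)\le C(l)\varepsilon$), so the bootstrap formulation is the one the rest of the paper is built on. Two small points you should patch: (i) your construction outputs $u=u_{\#}+w$, and since $u_{\#}$ is \emph{not} assumed to lie in $\mathcal{S}'$, the claimed membership $u\in\mathcal{S}'(I\times\R)$ does not follow by addition from $w\in\mathcal{S}'$; either identify $u$ with the solution furnished by Theorem~\ref{thm:LWP} via uniqueness, or bootstrap $u$ through its own Duhamel formula, which works because $\|u\|_{L^p_tW^{1,q}_x}\le\varepsilon_0+K\varepsilon<\infty$ gives $\|f(u)\|_{L^{p'}_tW^{1,q'}_x}<\infty$; (ii) your H\"older-continuity bound $|F'(z_1)-F'(z_2)|\lesssim(|z_1|+|z_2|)^{\alpha-2}|z_1-z_2|$ generates the cross term $(|u_{\#}|+|w|)^{\alpha-2}|w|\,|\nabla u_{\#}|$ rather than the term $|\nabla u_{\#}|\,|v|^{\alpha-1}$ displayed in \eqref{eq:diff-est}; this is in fact the more accurate version for $\alpha>2$ and is absorbed by the same trilinear H\"older computation, so nothing breaks.
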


\begin{remark}\label{R3p1}
Notice that from the Strichartz estimate~\eqref{eq:S1}, the hypothesis~\eqref{eq:flux} is redundant if one is willing to assume also smallness of the energy of $u_0-u_{\#}(0),$ namely asking for $\widetilde{A}=\mathcal{O}(\varepsilon).$  
\end{remark}

\begin{remark}[$\alpha$ restrictions in~\ref{def:appr}]
As already emphasized in the introduction, the lower bound $\alpha \geq 2$ seems to us just a technical requirement needed in the proof (see equation~\eqref{eq:fix-point-2} below). In other words, we believe that our results should remain valid even in the range $1<\alpha< 5.$ Regarding the upper bound $\alpha< 5,$ while it corresponds to the standard $L^2$-subcritical regime and may seem somewhat unusual in the context of our $H^1$-setting, it remains reasonable in this framework. Indeed, the  proof of Lemma 3.1 must, in a sense, ``forget" the energy space $H^1$: since it relies only on the smallness condition in hypothesis~\eqref{eq:flux} and not on the smallness of the difference of initial data in the energy space (we only assume $\|u_0-u_{\#}(0)\|_{H^1(\R)}\leq \widetilde{A},$ with $\widetilde{A}>0$), one is compelled to employ bounds that circumvent the energy space. This leads naturally to the $L^2$-motivated constraint $\alpha<5.$
\end{remark}

\begin{proof}[Proof of Lemma~\ref{lemma:stability}]
	By the well-posedness theory developed in the previous section (Theorem~\ref{thm:LWP}), we can assume that the solution $u$ to~\eqref{eq:NLS-anyalpha} already exists and belongs to $\mathcal{C}'.$ Thus to prove the result we only need to show~\eqref{eq:small}-\eqref{eq:large} as a priori estimates. We establish these bounds for $t\geq 0,$ the portion of $I$ corresponding to $t<0$ can be treated similarly.
	  
	Let $v:=u-u_{\#}.$ Then $v$ satisfies the following equation
	\begin{equation}\label{eq:v}
		i\partial_t v + \partial_x^2 v + f(u_{\#} + v) - f(u_{\#}) + \mathcal{E}[u_{\#}]=0,
		\qquad f(s):=|s|^{\alpha-1}s.
	\end{equation}
	Using the integral equation associated to~\eqref{eq:v} one has
	\begin{equation}\label{eq:integral}
		v(t)=e^{it\partial_x^2}(u_0-u_{\#}(0)) 
		+ i \int_0^t e^{i(t-s)\partial_x^2}[f(u_{\#}+v)-f(u_{\#})]\, ds
		+i \int_0^t e^{i(t-s)\partial_x^2} \mathcal{E}[u_{\#}]\, ds.
	\end{equation}
	Let $\tau\in I,$ we will now work on the slab $[0,\tau]\times \R.$ 
	
	Using~\eqref{eq:S1} for the first term in~\eqref{eq:integral},~\eqref{eq:S2} for the second and the third terms and then hypotheses~\eqref{eq:ug} and~\eqref{eq:E} we have
	\begin{equation}\label{eq:sup}
		\begin{split}
		\|v\|_{\mathcal{C}'}
		&\lesssim
		\|u_0-u_{\#}(0)\|_{H^1_x}
		+\|f(u_{\#}+v)-f(u_{\#})\|_{L^{p'}_t W^{1, q'}_x}
		+\|\mathcal{E}[u_{\#}]\|_{L^{\widetilde{p}'}_t W^{1, \widetilde{q}'}_x}\\
		&\lesssim \widetilde{A} + S(\tau) + \varepsilon,  
		\end{split}
	\end{equation}
	where we have defined
	\begin{equation}\label{ST}
		S(\tau):=\|f(u_{\#}+v)-f(u_{\#})\|_{L^{p'}_t W^{1, q'}_x([0,\tau]\times \R)}.
	\end{equation}
	Consequently (see \cite[Remark 2.4]{KM}),
	\begin{equation}\label{eq:diff-est}
	\begin{aligned}
	|\partial_x [f(u_{\#}+v)-f(u_{\#})]| \lesssim   |\partial_x  v||u_{\#}|^{\alpha-1} + |\partial_x  v||v|^{\alpha-1}+ |\partial_x  u_{\#}| |v|^{\alpha-1} + |u_\#|^{\alpha-2}|\partial_x u_\#||v|. 
	\end{aligned}
	\end{equation}
	Thus, in order to bound $S(t)$ we need to estimate terms of the following type $\||w_1|^{\alpha-1}\partial_x  w_2\|_{L^{p'}_t L^{q'}_x}.$ Using the H\"older inequality one obtains
	\begin{equation}\label{eq:fix-point}
		\begin{split}
			\||w_1|^{\alpha-1}\partial_x  w_2\|_{L^{p'}_t L^{q'}_x}
			=\||w_1|^{\alpha-1}\partial_x  w_2\|_{L^\frac{4(\alpha+1)}{3\alpha +5}_t L^\frac{\alpha+1}{\alpha}_x}
			&\leq \Big\| \|\partial_x  w_2\|_{L^{\alpha+1}_x} \|w_1\|_{L^{\alpha+1}_x}^{\alpha-1}\Big\|_{L^\frac{4(\alpha+1)}{3\alpha + 5}_t}\\
			&\leq\|\partial_x  w_2\|_{L^\frac{4(\alpha+1)}{\alpha-1}_t L^{\alpha +1}_x} \|w_1\|_{L^\frac{2(\alpha^2-1)}{\alpha + 3}_t L^{\alpha+1}_x}^{\alpha-1}\\
			&\leq T^\delta \|\partial_x  w_2\|_{L^\frac{4(\alpha+1)}{\alpha-1}_t L^{\alpha +1}_x}\|w_1\|_{L^\frac{4(\alpha+1)}{\alpha-1}_t L^{\alpha +1}_x}^{\alpha-1}\\
			&\leq T^\delta \|\partial_x  w_2\|_{L^{p}_t L^{q}_x}\|w_1\|_{L^{p}_t L^{q}_x}^{\alpha-1},
		\end{split}
	\end{equation}
	where $\delta$ is a H\"older conjugate exponent which is positive if $\alpha <5$ being $\frac{2(\alpha^2-1)}{\alpha+3}<\frac{4(\alpha+1)}{\alpha-1}.$
	Similarly, if $\alpha \geq 2$
	\begin{equation}\label{eq:fix-point-2}
	\begin{aligned}
	\||w_1|^{\alpha-2} w_2\partial_x  w_3\|_{L^{p'}_t L^{q'}_x} \le &~{} \left\| \| w_1\|_{L_x^{\alpha+1}}^{(\alpha-2)}\| w_2\|_{L_x^{\alpha+1}} \|\partial_x  w_3\|_{L_x^{\alpha+1}}\right\|_{L^{p'}_t}\\
	\le &~{} T^{\delta'} \|w_1\|_{L^p_tL^q_x}^{\alpha-2} \|w_2\|_{L^p_tL^q_x}\| w_3\|_{L^p_tW^{1,q}_x}.
	\end{aligned}
	\end{equation}
	Here, $\delta'=\frac{3\alpha+5}{4(\alpha +1)} - \frac{\alpha(\alpha-1)}{4(\alpha +1)}>0$ in the subcritical range $\alpha\in [2,5)$. Using~\eqref{eq:diff-est} and~\eqref{eq:fix-point}-\eqref{eq:fix-point-2} and using that we are assuming $T\leq 1,$ one has
	\begin{equation}\label{eq:ST}
		\begin{split}
			S(\tau)
			&\lesssim \|\partial_x  u_{\#}|v|^{\alpha-1}\|_{L^{p'}_t L^{q'}_x}+\|\partial_x  v |u_{\#}|^{\alpha-1}\|_{L^{p'}_t L^{q'}_x} \\
			& \qquad +\|\partial_x  v |v|^{\alpha-1}\|_{L^{p'}_t L^{q'}_x} + \|\partial_x u_\# |u_\#|^{\alpha-2} v \|_{L^{p'}_t L^{q'}_x} \\
			&\lesssim  \|\partial_x  u_{\#}\|_{L^{p}_t L^{q}_x}\|v\|_{L^{p}_t L^{q}_x}^{\alpha-1} + \|\partial_x  v\|_{L^{p}_t L^{q}_x}\|u_{\#}\|_{L^{p}_t L^{q}_x}^{\alpha-1} \\
			& \qquad \quad + \|\partial_x  v\|_{L^{p}_t L^{q}_x}\|v\|_{L^{p}_t L^{q}_x}^{\alpha-1} + \| \partial_x u_\# \|_{L^{p}_t L^{q}_x} \|u_\# \|_{L^{p}_t L^{q}_x}^{\alpha-2} \| v \|_{L^{p}_t L^{q}_x}  \\
			&\lesssim  \| u_{\#}\|_{L^{p}_t W^{1,q}_x}\|v\|_{L^{p}_t W^{1,q}_x}^{\alpha-1} + \| v\|_{L^{p}_t W^{1,q}_x} \|u_{\#}\|_{L^{p}_t W^{1,q}_x}^{\alpha-1} 
			\\
			& \qquad \quad + \| v\|_{L^{p}_t W^{1,q}_x}^\alpha + \| u_\# \|_{L^{p}_t W^{1,q}_x} \|u_\# \|_{L^{p}_t L^{q}_x}^{\alpha-2} \| v\|_{L^{p}_t L^{q}_x}.
		\end{split}
	\end{equation}
	Notice that the hidden constants in the $\lesssim$ notation in~\eqref{eq:ST} and in the forthcoming equations are independent of $T.$
	
	Using~\eqref{eq:integral} and \eqref{ST}, on the one hand one has
	\begin{equation}\label{eq:str-norm}
		\begin{split}
		\|v\|_{L^p_t W^{1,q}_x}
		&\lesssim \|e^{it\partial_x^2}(u_0-u_{\#}(0))\|_{L^p_t W^{1,q}_x} + \|f(u_{\#}+v)-f(u_{\#})\|_{L^{p'}_t W^{1,q'}_x} + \|\mathcal{E}[u_{\#}]\|_{L^{{p}'}_t W^{1,{q}'}_x}\\
		&\lesssim S(\tau) + \varepsilon.
		\end{split}
	\end{equation}
	where in the last inequality we have used~\eqref{eq:flux} and~\eqref{eq:E}. On the other hand, from~\eqref{eq:ST}, \eqref{eq:ug} and \eqref{eq:str-norm}, 
	\begin{equation*}
		S(\tau)\lesssim   \varepsilon_0 (S(\tau) + \varepsilon)^{\alpha-1}
		+ \varepsilon_0^{\alpha-1} (S(\tau)+ \varepsilon) + (S(\tau)+ \varepsilon)^\alpha.
	\end{equation*}
	
	Taking $\varepsilon_0$ sufficiently small, a standard continuity argument gives
	\begin{equation*}
		S(\tau)\leq \varepsilon, 
		\qquad \text{for all } \tau\in I,
	\end{equation*}
	which implies~\eqref{eq:S}.
	Using this bound in~\eqref{eq:str-norm} and~\eqref{eq:sup} gives~\eqref{eq:small} and~\eqref{eq:large}, respectively. Thus the proof of the lemma is concluded.
\end{proof}

\section{Long time stability}\label{sec:LTS}

We will now focus on iterating the aforementioned result to address the more general scenario of near-solutions with \emph{finite} yet \emph{arbitrarily large} space-time norms. Specifically, our objective is to prove the following alternative lemma.
\begin{lemma}[Long-time stability]\label{lemma:stability-long}
Let $I=[-T,T]$ be a fixed compact interval containing zero. Let $u_{\#}$ be an approximate solution to~\eqref{eq:NLS-anyalpha} on $I\times \R$ in the sense of Definition~\ref{def:appr}. Assume that 
	\begin{equation}\label{base}
		\|u_{\#}\|_{L^\infty_t H^1_x(I\times \R)}\leq A,
	\end{equation}
	for some constant $A>0.$ 
	Let $u_0\in H^1(\R)$ such that 
	\begin{equation}\label{eq:H1-alt}
		\|u_0-u_{\#}(0)\|_{H^1(\R)}\leq \widetilde{A},
	\end{equation}
	for some $\widetilde{A}>0.$
	 Assume
	\begin{align}
		\label{eq:bound-B}
		\|u_{\#}\|_{L^p_t W^{1,q}_x(I\times \R)}&\leq B,\\ 
		\label{eq:flux-alt}
		\|e^{it\partial_x^2} (u_0-u_{\#}(0))\|_{L^p_t W^{1,q}_x(I\times \R)}&\leq \varepsilon,\\
		\label{eq:E-alt}
		\|\mathcal{E}[u_{\#}]\|_{L^{p'}_t W^{1,q'}_x(I\times \R)}&\leq \varepsilon,
	\end{align}
	for some constant $B>0,$ $0<\varepsilon\leq \varepsilon_1,$ with $\varepsilon_1=\varepsilon_1(A, \widetilde{A},B,T)>0$ a small constant, where $(p,q)$ is as in~\eqref{eq:pq}.
	
	Then there exists a unique solution $u\in \mathcal{C}'(I\times \R)$ to~\eqref{eq:NLS-anyalpha} on $I\times \R$ with initial data $u_0\in H^1(\R)$ such that 
	\begin{align}
		\label{eq:small-alt}
		\|u-u_{\#}\|_{L^p_t W^{1,q}_x(I\times \R)}&\leq C(A,\widetilde{A},B, T) \varepsilon,\\
		\label{eq:large-alt}
		\|u-u_{\#}\|_{\mathcal{C}'(I\times \R)}&\leq C(A,\widetilde{A},B,T).		
	\end{align}
	In particular, from~\eqref{eq:large-alt} one has  $\|u-u_{\#}\|_{L^\infty_t H^1_x(I\times \R)}\leq C(A,\widetilde{A},B,T).$
\end{lemma}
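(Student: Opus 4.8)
The plan is to deduce the long-time statement from finitely many applications of the short-time Lemma~\ref{lemma:stability}, chained along a partition of the time interval. Since $\|u_{\#}\|_{L^p_t W^{1,q}_x(I\times\R)}\le B$ by~\eqref{eq:bound-B}, the map $t\mapsto\|u_{\#}(t)\|_{W^{1,q}_x}^p$ is integrable on $I$, so by absolute continuity of the integral we may split $I$ into $J$ consecutive closed subintervals $I_j=[t_{j-1},t_j]$, $j=1,\dots,J$, on each of which $\|u_{\#}\|_{L^p_t W^{1,q}_x(I_j\times\R)}\le\varepsilon_0$, where $\varepsilon_0$ is the smallness threshold of Lemma~\ref{lemma:stability}. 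A direct count gives $J\lesssim (B/\varepsilon_0)^p+1$; inspecting the proof of Lemma~\ref{lemma:stability} one sees that $\varepsilon_0$ can be taken to depend only on $A$ and $|I|$ and not on the size of the initial error, so that $J=J(A,B)$ is fixed once and for all and no circularity arises with the growth of the endpoint errors estimated below.

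On the first slab I would apply Lemma~\ref{lemma:stability} with initial error $u_0-u_{\#}(0)$: the hypothesis~\eqref{eq:ug} holds on $I_1$ by construction of the partition, while~\eqref{eq:flux} and~\eqref{eq:E} follow from the global bounds~\eqref{eq:flux-alt} and~\eqref{eq:E-alt}. This yields, with $v:=u-u_{\#}$, the bounds $\|v\|_{L^p_t W^{1,q}_x(I_1)}\lesssim\varepsilon$, $\|v\|_{\mathcal S'(I_1)}\lesssim\widetilde A+\varepsilon$, and---via~\eqref{eq:S}---the smallness of the nonlinear interaction $\|f(u_{\#}+v)-f(u_{\#})\|_{L^{p'}_t W^{1,q'}_x(I_1)}\lesssim\varepsilon$, with $f(s)=|s|^{\alpha-1}s$. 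In particular the endpoint error satisfies $\|v(t_1)\|_{H^1}\lesssim\widetilde A+\varepsilon$.

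The decisive point is to propagate this to $I_2,\dots,I_J$ while preserving the smallness of the free-evolution hypothesis~\eqref{eq:flux}, which is the one genuinely new difficulty. A crude estimate $\|e^{i(t-t_{j-1})\partial_x^2}v(t_{j-1})\|_{L^p_t W^{1,q}_x(I_j)}\lesssim\|v(t_{j-1})\|_{H^1}$ is useless, since the right-hand side is only bounded by a (possibly large) constant. Instead I would expand $v(t_{j-1})$ back to the initial time through Duhamel and compose with $e^{i(t-t_{j-1})\partial_x^2}$, obtaining
\[
e^{i(t-t_{j-1})\partial_x^2}v(t_{j-1})=e^{it\partial_x^2}\big(u_0-u_{\#}(0)\big)+i\int_0^{t_{j-1}}e^{i(t-s)\partial_x^2}\big[f(u_{\#}+v)-f(u_{\#})+\mathcal E[u_{\#}]\big]\,ds.
\]
Taking the $L^p_t W^{1,q}_x(I_j)$ norm, the first term is $\le\varepsilon$ by~\eqref{eq:flux-alt}, and the retarded Strichartz estimate~\eqref{eq:S2} bounds the second by the accumulated nonlinear interaction on $[0,t_{j-1}]$ plus $\|\mathcal E[u_{\#}]\|_{L^{p'}_t W^{1,q'}_x(I)}$. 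Since each per-slab interaction is $\lesssim\varepsilon$ by~\eqref{eq:S} and there are at most $J$ of them, the free evolution on $I_j$ is controlled by $CJ\varepsilon$. Choosing $\varepsilon_1=\varepsilon_1(A,\widetilde A,B)$ so small that $CJ\varepsilon_1\le\varepsilon_0$ then guarantees that~\eqref{eq:flux} holds on every $I_j$, and Lemma~\ref{lemma:stability} applies inductively with initial error $v(t_{j-1})$.

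It remains to assemble the global estimates. On each slab Lemma~\ref{lemma:stability} yields $\|v\|_{L^p_t W^{1,q}_x(I_j)}\lesssim J\varepsilon$ and $\|v\|_{\mathcal S'(I_j)}\lesssim\widetilde A_j+J\varepsilon$, where $\widetilde A_j:=\|v(t_{j-1})\|_{H^1}$, while the endpoint errors obey the recursion $\widetilde A_{j+1}\le C(\widetilde A_j+J\varepsilon)$, hence $\widetilde A_j\le C^{j}(\widetilde A+\varepsilon)\le C(A,\widetilde A,B)$. Summing the finitely many subinterval bounds gives~\eqref{eq:small-alt} and~\eqref{eq:large-alt} with constants depending only on $A,\widetilde A,B$ through $J$; existence and uniqueness of $u$ in $\mathcal S'(I\times\R)$ come from Theorem~\ref{thm:LWP} together with global existence in the $L^2$-subcritical regime, with the a priori bounds above providing the quantitative content. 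The main obstacle is exactly the control of~\eqref{eq:flux} on later slabs: the resolution is to never bound the free evolution by the $H^1$ size of the accumulated error, but to re-expand to the initial time so that only the globally small quantities---$\varepsilon$ and the per-step nonlinear interactions---enter.
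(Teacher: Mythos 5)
Your proposal is correct and follows essentially the same route as the paper's proof: partition $I$ into finitely many slabs on which $\|u_{\#}\|_{L^p_t W^{1,q}_x}$ falls below the short-time threshold, iterate Lemma~\ref{lemma:stability}, and---crucially---verify the free-evolution hypothesis~\eqref{eq:flux} on later slabs by re-expanding $v(t_{j})$ through Duhamel back to time $0$, so that only the global hypothesis~\eqref{eq:flux-alt} and the accumulated per-slab nonlinear interactions (each $\lesssim\varepsilon$) enter, never the raw $H^1$ size of the accumulated error. The paper makes exactly this move in its bound on $\|e^{i(t-t_{j})\partial_x^2}(u(t_{j})-u_{\#}(t_{j}))\|_{L^p_t W^{1,q}_x(I_{j}\times \R)}$, differing from your write-up only in bookkeeping (it bounds the endpoint $H^1$ errors directly from time $0$, giving additive rather than geometric growth in $j$, and states the threshold as $\varepsilon_0(A,2\widetilde{A})$ rather than invoking its independence of the initial error).
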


\begin{remark}
There are two main differences between Lemmas \ref{lemma:stability} and \ref{lemma:stability-long}. The first one is in \eqref{eq:bound-B}, where $B$ is allowed to be large, unlike in \eqref{eq:ug}. The second is in \eqref{eq:S}, which is not present in Lemma \ref{lemma:stability-long}.
\end{remark}
\begin{remark}
	We stress that, as it can be easily seen from the proof below, the $T$ dependence in the constants above should not be confused with mere continuity of the flux.
\end{remark}

Lemma~\ref{lemma:stability-long} can be obtained as an easy consequence of Lemma~\ref{lemma:stability} just using an iteration argument based on partitioning the time interval.
\begin{proof}[Proof of Lemma~\ref{lemma:stability-long}]
	Without loss of generality we may assume that $t_0:=0$ is the lower bound of the interval $I.$ Let $\varepsilon_0=\varepsilon_0(A, 2\widetilde{A},T)$ be as in Lemma~\ref{lemma:stability}. Since from~\eqref{eq:bound-B} the $L^p_tW^{1,q}_x(I\times \R)$ norm of $u_{\#}$ is finite, we can subdivide the interval $I$ into $N\leq C({\color{black}A},B,\varepsilon_0)$ sub-intervals $I_j=[t_j,t_{j+1}]$ such that $|t_{j+1}-t_j|\leq 1$ for $j\in \{0,\dots, N-1\}$ and such that on each $I_j$ we have
\begin{equation}\label{eq:small-j}
	\|u_{\#}\|_{L^p_t W^{1,q}_x(I_j\times \R)}\leq \varepsilon_0.
\end{equation}
Recall that we are assuming that $0<\varepsilon\leq \varepsilon_1$. Choosing $\varepsilon_1$ sufficiently small depending on $\varepsilon_0, N, A$ and $\widetilde{A},$ we can apply Lemma~\ref{lemma:stability} to obtain that for each $j\in \{0,\dots, N-1\}$ then we have 
\begin{equation}\label{eq:Ij}
\begin{split}
		\|u-u_{\#}\|_{L^p_t W^{1,q}_x(I_j\times \R)}&\lesssim_j \varepsilon,\\
		\|u-u_{\#}\|_{\mathcal{C}'(I_j\times \R)}&\lesssim_j \widetilde{A} + \varepsilon,	
\end{split}
\end{equation} 
provided we can show that~\eqref{eq:H1-alt} and the smallness assumption~\eqref{eq:flux-alt} holds with $t_0=0$ replaced by the lower bound $t_j$ of the sub-interval $I_j.$ Notice that assumption~\eqref{eq:E-alt} is trivially true for $I$ replaced by $I_j.$  
\[
(i\partial_t + \partial_x^2)(u-u_{\#}) + \mathcal{E}[u_{\#}] = f(u_{\#}) -f(u_{\#} +v), \quad u= u_{\#} +v.
\]
We shall prove by an inductive argument that
\begin{equation*}
	S(t_l):=\| (i\partial_t + \partial_x^2)(u-u_{\#}) + \mathcal{E}[u_{\#}]\|_{L^p_t W^{1,q}_x(I_l\times \R)}\leq C(l)\varepsilon,
\end{equation*}
for any $l\in \{0, \ldots,N-1\}$, and where $C(l) \geq 1$ is a fixed quantity independent of $\varepsilon$.  

From Lemma~\ref{lemma:stability} applied to $I_0=[0,t_1],$ one has $S(t_0)\lesssim \varepsilon$ (\emph{cf.}~\eqref{eq:S}). This inequality also defines $C(0).$ Assume up to the case $l-1$, where $C(l) \geq 1$ is a fixed quantity independent of $\varepsilon$. Arguing as in the proof of Lemma~\ref{lemma:stability} eqn. \eqref{eq:ST}, and for $l\in \{1, \ldots, N-1\}$, we get for $v= u-u_{\#}$ and on each interval $I_l,$
\begin{equation}\label{eq:Sj}
\begin{aligned}
S(t_l)
&\lesssim |I_l|^\delta \Big(\| v\|_{L^{p}_t W^{1,q}_x([0,t_{l+1}]\times \R)}^\alpha+ \|v\|_{L^{p}_t W^{1,q}_x([0,t_{l+1}]\times \R)}^{\alpha-1}  \| u_{\#}\|_{L^{p}_t W^{1,q}_x(I_l\times \R)} \\
& \qquad \quad + \| v\|_{L^{p}_t W^{1,q}_x([0,t_{l+1}]\times \R)} \|u_{\#}\|_{L^{p}_t W^{1,q}_x(I_l\times \R)}^{\alpha-1} +  \| v\|_{L^{p}_t L^{q}_x([0,t_{l+1}]\times \R)} \| u_\# \|_{L^{p}_t W^{1,q}_x(I_l\times \R)}^{\alpha-1}\Big).
\end{aligned}
\end{equation}
From the integral equation~\eqref{eq:integral} one has
\begin{multline*}
	\|v\|_{L^p_t W^{1,q}([0,t_{l+1}]\times \R)}\\
	\begin{aligned}
	&\leq C\|e^{it\partial_x^2}(u_0-u_{\#}(0))\|_{L^p_t W^{1,q}([0,t_{l+1}]\times \R)}
	+ C \|(i\partial_t + \partial_x^2)(u-u_{\#}) +\mathcal{E}[u_{\#}]\|_{L^{p'}_t W^{1, q'}_x([0,t_{l+1}]\times \R)}\\
&		+C\|\mathcal{E}[u_{\#}]\|_{L^{{p}'}_t W^{1, {q}'}_x(I\times \R)}\\
	&\leq C\|e^{it\partial_x^2}(u_0-u_{\#}(0))\|_{L^p_t W^{1,q}([0,t_{l+1}]\times \R)}
	+C \sum_{m=0}^{l-1} S(t_m) + C S(t_l) +C \|\mathcal{E}[u_{\#}]\|_{L^{{p}'}_t W^{1, {q}'}_x(I\times \R)}\\
	&\le C_1 \varepsilon  + C_2\varepsilon \sum_{m=0}^{l-1} C(m)   +C_3 S(t_l)  \\
	&\leq C(l)( S(t_l)+ \varepsilon), \quad C(l):= \max\left\{  C_1,C_2\sum_{m=0}^{l-1} C(m) ,C_3\right\},
\end{aligned} 
\end{multline*}
where the last bound follows from~\eqref{eq:flux-alt},~\eqref{eq:E-alt} and the inductive hypothesis $S(t_m)\leq C(m)\varepsilon$ for any $m\in\{0,\ldots, l-1\}.$ Plugging this bound in~\eqref{eq:Sj}, using also~\eqref{eq:small-j},
\begin{equation*}
		S(t_l)\lesssim |I_l|^\delta \left(  \varepsilon_0C(l)^{\alpha-1} (S(t_l) + \varepsilon)^{\alpha-1}
		+ \varepsilon_0^{\alpha-1} C(l)(S(t_l)+ \varepsilon) +C(l)^\alpha (S(t_l)+ \varepsilon)^\alpha \right),
	\end{equation*}
and then a continuity argument using that $\varepsilon_0$ is small as above and $\alpha\ge 2,$ gives
\begin{equation}\label{eq:bound-Sj}
S(t_l)\leq C(l) \varepsilon.
\end{equation}
Now, using~\eqref{eq:integral}, by the unitarity in $H^1_x$ of the operator $e^{it\partial_x^2}$ and from Strichartz estimate~\eqref{eq:S4} one has
\begin{equation*}
	\begin{split}
		\|u(t_{j})-u_{\#}(t_{j})\|_{H^1_x(\R)}&\leq \|u(t)-u_{\#}(t)\|_{L^\infty_tH^1_x([0,t_{j}] \times \R)}\\
		&\leq \|u_0-u_{\#}(0)\|_{H^1_x(\R)}
		+ \sum_{l=0}^{j-1} S(t_l) 
		+\|\mathcal{E}[u_{\#}]\|_{L^{{p}'}_t W^{1, {q}'}_x(I\times \R)}\\
		&\lesssim \widetilde{A} + \varepsilon + \sum_{l=0}^{j-1} C(l)\varepsilon,
	\end{split}
\end{equation*}  
where the last bound follows from~\eqref{eq:H1-alt},~\eqref{eq:E-alt} and~\eqref{eq:bound-Sj}.

Similarly, one has
\begin{multline*}
		\|e^{i(t-t_{j})\partial_x^2} (u(t_{j})-u_{\#}(t_{j}))\|_{L^p_t W^{1,q}_x(I_{j}\times \R)}\\
		\begin{aligned}
		&\leq 
		\|e^{it\partial_x^2}(u_0-u_{\#}(0))\|_{L^p_t W^{1,q}_x([0,t_{j+1}]\times \R)}
		+ \left\| \int_0^{t_j} e^{i(t-s)\partial_x^2} (i\partial_t + \partial_x^2)(u-u_{\#})\, ds \right\|_{L^p_t W^{1,q}_x([0,t_{j+1}]\times \R)}\\
		&\leq \|e^{it\partial_x^2}(u_0-u_{\#}(0))\|_{L^p_t W^{1,q}_x([0,t_{j+1}]\times \R)} 
		 + \sum_{l=0}^{j} S(t_l) 
		+\|\mathcal{E}[u_{\#}]\|_{L^{{p}'}_t W^{1, {q}'}_x(I\times \R)}\\
		&\leq \varepsilon +  \sum_{l=0}^{j} C(l) \varepsilon. 
		\end{aligned}
\end{multline*} 
Since $0<\varepsilon <\varepsilon_1$, choosing $\varepsilon_1$ sufficiently small, one has that~\eqref{eq:Ij} holds true for any $j\in \{0,\dots, N-1\}.$ Thus, bounds~\eqref{eq:small-alt} and~\eqref{eq:large-alt} follow summing over all the intervals $I_j,$ $j\in \{0,\dots, N-1\}.$

\end{proof}

\section{Proof of the Main Result}\label{sec:PMR}

%
%
%

Consider an interval of time $I$ containing zero. Let $u_0\in H^1(\mathbb R)$. Assume (H1). Let $A,\widetilde A, B>0$ be fixed values. Let $0<\varepsilon <\varepsilon_1(A, \widetilde{A},B)$. Recall 
\[
(p,q)=\left(\tfrac{4(\alpha+1)}{\alpha-1},\alpha+1\right).
\]
Let $u(t)$ be the corresponding solution to NLS with initial data $u_0$. Let $u_{\text{DNN},\#}$ be the realization of a DNN $\Phi_{\#}$ as in Theorem \ref{MT}, such that (H2) and (H3) are satisfied in the following sense:
\begin{enumerate}
\item[(Step 1)] For $N_1,N_2,N_3$ and $M_2,M_3$ sufficiently large, collocation points, times and weights, one has
\[
\mathcal J_{\infty,H^1,N_2,M_2} [u_{\text{DNN},\#}] \leq \frac12 A, \quad \mathcal J_{H^1,N_1}[ (u_0-u_{\text{DNN},\#}(0))]\leq \frac12\widetilde{A}, \quad \mathcal J_{p,q,N_3,M_3} [u_{\text{DNN},\#}]  \leq \frac12B.
\]
\end{enumerate}
Later we will choose specific values for the parameters of the approximative norms. Similarly, we will require as well that
\begin{enumerate}
\item[(Step 2)] For all $N_4,M_4$ and $N_5,M_5$ positive integers large enough, collocation points, times and weights, 
\[
\begin{aligned}
 \mathcal J_{p',q',N_4,M_4} [\mathcal E[u_{\text{DNN},\#}]] + \mathcal J_{p,q,N_5,M_5} [ e^{it\partial_x^2} (u_0-u_{\text{DNN},\#}(0))]  < \frac12\varepsilon,
\end{aligned}
\]
with $\mathcal E[u_\#]:= i\partial_t u_\# + \partial_x^2 u_\# + |u_\#|^{\alpha-1}u_\#$. 
\end{enumerate}
Let 
\[
R:= \max_{n=1,2,3,4}\max_{j=1,\ldots, N_1}\{|x_{n,j}|\}
\]
be the maximum modulus of evaluation/collocation points among all the previous approximate integrals.  Let $[-R,R]\subseteq \mathbb R$. Let $\eta_R\in C_0^\infty(\mathbb R)$, $0\leq \eta\leq 1$, be a cut-off function such that $\eta =1$ in $[-R,R]$ and $\eta(x) =0$ if $|x|\geq 2R$. Define $u_{\#}:= u_{\text{DNN},\#}\eta_R$ now in Sobolev and Lebesgue spaces. Note that for all $t\in I$, and all $x\in[-R,R]$, one has $u_{\#}= u_{\text{DNN},\#}$. Consequently,
\[
\begin{aligned}
& \mathcal J_{\infty,H^1,N_2,M_2} [u_{\text{DNN},\#}]=\mathcal J_{\infty,H^1,N_2,M_2} [u_{\#}] , \\
& \mathcal J_{H^1,N_1} [(u_0-u_{\text{DNN},\#}(0))] =\mathcal J_{H^1,N_1} [(u_0-u_{\#}(0))], \quad \mathcal J_{p,q,N_3,M_3} [u_{\text{DNN},\#}] =\mathcal J_{p,q,N,M} [u_{\#}] ,
\end{aligned}
\]
and
\[
\begin{aligned}
& \mathcal J_{p',q',N_4,M_4} [\mathcal E[u_{\text{DNN},\#}]] =\mathcal J_{p',q',N_4,M_4} [\mathcal E[u_{\#}]],\\
& \mathcal J_{p,q,N_5,M_5} [ e^{it\partial_x^2} (u_0-u_{\text{DNN},\#}(0))]= \mathcal J_{p,q,N_5,M_5} [ e^{it\partial_x^2} (u_0-u_{\#}(0))].
\end{aligned}
 \]
Now (modulo choosing $N_j$, $M_j$ once again larger, and $R$ larger if necessary, which does not affect the last identities) we can use hypothesis (H1) and choose particular integration parameters for $f=u_0-u_{\#}(0)$ and $\delta =\frac12\varepsilon$ to get 
\[
\|  u_0-u_{\#}(0) \|_{H^1(\R)} \leq \mathcal J_{H^1,N_1} [(u_0-u_{\#}(0))]  +\frac12 \varepsilon.
\]
Similarly, for $g_2=g_3= u_{\#}$ and $\delta =\frac12\varepsilon$, we choose integration parameters such that
\[
 \begin{aligned}
&  \| u_{\#} \|_{L^\infty_tH^1(I\times \R)} \leq \mathcal J_{\infty,H^1,N_2,M_2} [u_{\#}] +\frac12\varepsilon,\\
 & \|u_{\#}\|_{L^p_t W^{1,q}_x(I\times \R)} \leq \mathcal J_{p,q,N_3,M_3} [u_{\#}] +\frac12\varepsilon. 
 \end{aligned}
 \]
Therefore,
 \[
 \begin{aligned}
 & \|  u_0-u_{\#}(0) \|_{H^1(\R)} \leq \mathcal J_{H^1,N_1} [(u_0-u_{\text{DNN},\#}(0))]  + \frac12\varepsilon \leq \frac12(\widetilde A +\varepsilon) \leq \widetilde A,\\
 & \| u_{\#} \|_{L^\infty_tH^1(I\times \R)} \leq \mathcal J_{\infty,H^1,N_2,M_2} [u_{\text{DNN},\#}] + \frac12\varepsilon \leq \frac12(A+\varepsilon)  \leq A,\\
 & \|u_{\#}\|_{L^p_t W^{1,q}_x(I\times \R)} \leq \mathcal J_{p,q,N_3,M_3} [u_{\text{DNN},\#}] +\frac12 \varepsilon \leq \frac12(B+ \varepsilon) \leq B. 
 \end{aligned}
 \]
 Additionally, using again the Hypothesis (H1),
  \[
 \begin{aligned}
 & \|\mathcal{E}[u_{\#}]\|_{L^{p'}_t W^{1,q'}_x(I\times \R)} \leq \mathcal J_{p',q',N_4,M_4} [\mathcal E[u_{\text{DNN},\#}]] + \frac12\varepsilon \leq  \frac12\varepsilon + \frac12\varepsilon  = \varepsilon,\\
 & \| e^{it\partial_x^2} (u_0-u_{\#}(0)) \|_{L^p_tW^{1,q}_x(I\times \R)} \leq \mathcal J_{p,q,N_5,M_5} [ e^{it\partial_x^2} (u_0-u_{\text{DNN},\#}(0))] + \frac12\varepsilon  \leq  \frac12\varepsilon + \frac12\varepsilon = \varepsilon.
 \end{aligned}
 \]
The hypotheses \eqref{base}-\eqref{eq:E-alt} in Lemma \ref{lemma:stability-long} are satisfied and one has that there exists a solution $u\in \mathcal{C}'(I\times \R)$ to~\eqref{eq:NLS-anyalpha} on $I\times \R$ with initial data $u_0\in H^1(\R)$ such that 
	\begin{align*}
		\|u-u_{\#}\|_{L^p_t W^{1,q}_x(I\times \R)}&\leq C(A,\widetilde{A},B) \varepsilon,\\
		\|u-u_{\#}\|_{\mathcal{C}'(I\times \R)}&\leq C(A,\widetilde{A},B),\\
		\|u-u_{\#}\|_{L^\infty_t H^1_x(I\times \R)}& \leq C(A,\widetilde{A},B).	
	\end{align*}
The final conclusion \eqref{eq:small-alt_0}-\eqref{eq:large-alt_0}-\eqref{eq:H1-alt_0} are obtained after considering the subinterval $[-R,R]$ and recalling that $u_{\#}= u_{\text{DNN},\#}$ in this interval.

\section{Numerical results}\label{sec:numerical}

\subsection{Application to Solitary waves} Theorem \ref{MT} establishes a rigorous framework for the approximation of NLS solitons. Solitary waves are a fundamental aspect of the focusing NLS equation \cite{Cazenave}. When $\alpha = 3$, the NLS becomes integrable, and exhibits solitons, multisolitons, and breather solutions. For a detailed exploration of these phenomena, see \cite{Cazenave,Cazenave2, AFM, AFM2, AC, SY}. Additionally, for more information on breather solutions in other models, references such as \cite{Wad, AM, BMW, AMP} provide further insight. 

\begin{corollary}\label{Cor1}
Let $\alpha\in[2,5)$ and $\varepsilon>0$. Let $u(t)\in H^1(\R)$ be any NLS solitary wave solution; or soliton, multi-soliton or breather if $\alpha=3$. Then there is $\widetilde A=\widetilde A(\varepsilon, I)=o_\varepsilon(1)$, such that the following holds. Assume that $u_{\emph{DNN},\#}$ is a DNN-generated realization of a PINNs $\Phi$ such that \emph{(H1)-(H3)} are satisfied with $\widetilde A$ as above. Then for all $R>0$ sufficiently large,
\begin{align*}
\|u-u_{\emph{DNN},\#}\|_{L^\infty_t H^1_x(I\times [-R,R])}&\lesssim_{A,B,{\color{black}T}} \varepsilon.
\end{align*}
\end{corollary}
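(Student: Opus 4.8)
The plan is to derive the improved $L^\infty_t H^1_x$ estimate as a quantitative refinement of the long-time stability Lemma~\ref{lemma:stability-long}, the only genuinely new ingredient being a careful coupling of the initial-error size $\widetilde A$ to the tolerance $\varepsilon$, together with a bookkeeping of how the constants generated by the iteration depend on $\widetilde A$. The starting observation is that for a solitary wave (or soliton, multi-soliton or breather when $\alpha=3$) the exact solution $u(t)$ is global in $H^1(\R)$ and, over the compact interval $I$, both $\|u\|_{L^\infty_t H^1_x(I\times\R)}$ and $\|u\|_{L^p_t W^{1,q}_x(I\times\R)}$ are finite. Hence the parameters $A$ and $B$ controlling $u_{\text{DNN},\#}$ through \emph{(H2)} are fixed numbers depending only on the profile and on $|I|$, and in particular are \emph{independent} of $\varepsilon$. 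The whole point is then to let $\widetilde A$ shrink with $\varepsilon$ while keeping $A,B$ frozen.

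First I would set $\widetilde A:=\widetilde A(\varepsilon,I)\le \varepsilon$, say $\widetilde A=\varepsilon$, which is $o_\varepsilon(1)$ as required; by Remark~\ref{R3p1} the flux hypothesis~\eqref{eq:flux-alt} is then automatically implied by~\eqref{eq:H1-alt} through the Strichartz estimate~\eqref{eq:S1}. Setting $u_\#:=u_{\text{DNN},\#}\eta_R$ exactly as in the proof of Theorem~\ref{MT}, hypotheses \emph{(H1)}--\emph{(H3)} translate into the analytic bounds~\eqref{base}--\eqref{eq:E-alt} carrying this small $\widetilde A$, so that Lemma~\ref{lemma:stability-long} applies for every $\varepsilon<\varepsilon_1(A,\widetilde A,B)$; since $\varepsilon_1(A,\cdot,B)$ stays positive as its argument tends to $0$, this constraint reduces to $\varepsilon$ small depending on $A,B$ alone.

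The key step is to reread the iteration in the proof of Lemma~\ref{lemma:stability-long} keeping the dependence on $\widetilde A$ explicit. The number of subintervals is fixed by splitting so that $\|u_\#\|_{L^p_t W^{1,q}_x(I_j\times\R)}\le\varepsilon_0(A,2\widetilde A)$ on each $I_j$; since the threshold $\varepsilon_0(A,\cdot)$ does not degenerate as its argument tends to $0$, one has $\varepsilon_0(A,2\widetilde A)$ bounded below by a positive constant for $\varepsilon$ small, whence $N\le N_0(A,B)$ uniformly in $\widetilde A$. Running the same induction that produced~\eqref{eq:bound-Sj}, the bounds $S(t_l)\le C(l)\varepsilon$ hold with constants $C(l)$ depending on $A,B,N_0$ but \emph{not} on $\widetilde A$ (because the short-time conclusion of Lemma~\ref{lemma:stability} carries only universal Strichartz constants, while the small effective initial errors $\|u(t_j)-u_\#(t_j)\|_{H^1_x}=O(C(N_0)\varepsilon)$ keep all applicability thresholds bounded below). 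The nodal estimate then reads
\[
\|u(t_j)-u_\#(t_j)\|_{H^1_x(\R)}\lesssim \widetilde A+\varepsilon+\sum_{l=0}^{j-1}C(l)\varepsilon,
\]
and summing over the finitely many $j\in\{0,\dots,N_0-1\}$ yields $\|u-u_\#\|_{L^\infty_t H^1_x(I\times\R)}\lesssim \widetilde A+C(N_0)\varepsilon$, which is \emph{linear} in $\widetilde A+\varepsilon$. With $\widetilde A\le\varepsilon$ and $N_0=N_0(A,B)$ this gives $\|u-u_\#\|_{L^\infty_t H^1_x(I\times\R)}\lesssim_{A,B}\varepsilon$; restricting to $[-R,R]$, where $u_\#=u_{\text{DNN},\#}$, produces the stated conclusion.

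The main obstacle is precisely this bookkeeping: one must verify that the smallness threshold $\varepsilon_0(A,2\widetilde A)$, the number of slabs $N$, and the iteration constants $C(l)$ all remain bounded as $\widetilde A\to0$, so that only the additive term $\widetilde A$ transports the initial error and the global constant stays a function of $A,B$ alone. Conceptually this is where the solitary-wave structure enters: because a moving soliton transports its energy across any fixed window $[-R,R]$, mere local smallness of the initial error could not be propagated, and it is only the requirement of matching $u_0$ in the \emph{full} $H^1(\R)$ norm — encoded in $\widetilde A=o_\varepsilon(1)$ — that forces the $L^\infty_t H^1_x$ error to stay of order $\varepsilon$ throughout $I$.
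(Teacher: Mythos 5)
Your proposal is correct and follows essentially the same route as the paper: the paper's (one-line) proof likewise rests on Remark~\ref{R3p1} to make the flux hypothesis~\eqref{eq:flux-alt} automatic once $\widetilde A=O(\varepsilon)$, and on the fact that the stability machinery of Lemmas~\ref{lemma:stability} and~\ref{lemma:stability-long} propagates bounds that are \emph{linear} in $\widetilde A+\varepsilon$ with constants depending only on $A$ and $B$. The added value of your write-up is that it makes explicit the bookkeeping the paper leaves implicit — namely that the threshold $\varepsilon_0(A,2\widetilde A)$, the number of slabs $N$, and the iteration constants $C(l)$ do not degenerate as $\widetilde A\to 0$ — which is exactly what is needed for the asserted $\lesssim_{A,B}\varepsilon$ conclusion.
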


The proof of this fact immediately follows from Remark \ref{R3p1} and the continuity of the flow. It is an open question to introduce and mix standard stability techniques in the description of PDEs via PINNs.
Although Corollary \ref{Cor1} may be considered as a consequence of the continuity of the flow, we will argue below that numerical tests corroborate Theorem \ref{MT} and the ``long time stability'' nature of the approximate PINNs solutions.

\subsection{Numerical tests. Preliminaries}

We conducted numerical tests to validate Theorem \ref{MT} and Corollary \ref{Cor1}, specifically focusing on well-known NLS solitary waves. All the codes are available in \url{https://github.com/nvalenzuelaf/DNN-Pinns-NLS} and the simulations were carried out using Python on a 64-bit MacBook Pro M2 (2022) with 8GB of RAM.

For these tests, we selected the integrable case $\alpha=3$, where many explicit soliton solutions are available. In this scenario, the chosen exponents are $(p,q) = (8,4)$, with the H\"older conjugates being $p'=\frac{8}{7}$ and $q'=\frac{4}{3}$. Our goal was to numerically approximate the solution $u_{\text{DNN},\#}$ using an algorithm implemented in Python. The full details of the algorithm are as follows:
 \begin{enumerate}
 \item[(i)] We start with a DNN $u_{\text{DNN},\#} := u_{\text{DNN},\#,\theta}$ having $H$ hidden layers, $n_H$ neurons per layer and the classical sine activation function acting component-wise {\color{black} except by the last hidden layer where the activation is taken as tanh}. The weights and biases will be initialized in a standard way, using the Glorot uniform initializer. They will be summarized in $\theta$ as the optimization variables.
 \item[(ii)] To optimize the DNN parameters, we will use the LBFGS method (see, e.g. \cite{Fle13,KoWh19}), which in our case performed better than the standard Adam method,  with 3000 iterations.
 \item[(iii)] For $N_4,M_4,N_5,M_5$ as in (H3) we choose the points $(x_{4,j})_{j=1}^{N_4}$, $(x_{5,j})_{j=1}^{N_5} \subset [-R,R]$ and $(t_{4,\ell})_{\ell=1}^{M_4}$, $(t_{5,\ell})_{\ell=1}^{M_5}\subset [-T,T]$, to obtain two grids of $M_4\times N_4$ and $M_5\times N_5$ points $(t_{4,\ell},x_{4,j})_{\ell,j=1}^{M_4,N_4}$ and $(t_{5,\ell}, x_{5,j})_{\ell,j=1}^{M_5,N_5}$, respectively.
 
{\color{black}In the numerical simulations, $(x_{4,j})_{j=1}^{N_4}, (x_{5,j})_{j=1}^{N_5}$, $(t_{4,\ell})_{\ell=1}^{M_4}$ and $(t_{5,\ell})_{\ell=1}^{M_5}$ will be taken as uniformly partitions over $[-R,R]$ and $[-T,T]$. The values of $M_4,M_5,N_4,N_5$ will be declared in each example.}
 \item[(iv)] We will consider the loss function as the functions defined in (H3): First, for simplicity define
 \[
 v(t,\cdot) \equiv e^{it\partial_x^2} (u_0-u_{\text{DNN},\#}(0)).
 \]
 As said before, this linear evolution can be computed using an approximation on the corresponding Fourier transform. Denote as $\hat v(t_\ell,x_j)$ the approximation of $v$ at the point $(t_\ell,x_j)$. Then the loss function takes the form:
 \begin{equation}\label{eq:loss}
 \begin{aligned}
 \hbox{Loss}(\theta) &:=  {\color{black}\gamma} \mathcal J_{p',q',N_4,M_4} [\mathcal E[u_{\text{DNN},\#}]] + \mathcal J_{p,q,N_5,M_5} [\hat v ],
 \end{aligned}
 \end{equation}
 where {\color{black}$\gamma$ is properly chosen as 3 to balance the terms on the loss function and} we recall $\mathcal E[u_{\text{DNN},\#}]:= i\partial_t u_{\text{DNN},\#} + \partial_x^2 u_{\text{DNN},\#} + |u_{\text{DNN},\#}|^{\alpha-1}u_{\text{DNN},\#}$.
 \item[(v)] In each example we aim to obtain numeric values for the bounds involved in (H2). For this, along this section we consider in each iteration step the quantities
 \[
\widetilde A := \mathcal J_{H^1,N_1}[(u_0-u_{\text{DNN},\#}(0))],
\]
\[
A:= \mathcal J_{\infty,H^1,N_2,M_2}[u_{\text{DNN},\#}], \qquad B := \mathcal J_{p,q,N_3,M_3}[u_{\text{DNN},\#}],
\]
where $u_{\text{DNN},\#}$ is the respective DNN of each iteration in the LBFGS algorithm and $N_1=N_2=N_3=N_5$ and $M_2=M_3=M_5$.
 \item[(vi)] Consider $N_\text{test} = M_\text{test} = 100$. We will use as test data a grid of $N_\text{test} \times M_\text{test}$ points $(t_{\ell},x_j)_{\ell,j=1}^{M_\text{test},N_\text{test}}$ generated in the same way as the training points. If $u = u(t,x)$ is the exact solution of the NLS equation, we will compute the following errors
 \begin{align}\label{Sprime-error}
 \hbox{error}_{\mathcal{C}'} := &~{} \max_{(p,q)} \mathcal J_{p,q,N_\text{test},M_\text{test}}[u_{\text{DNN},\#}-u],\\
 \hbox{error}_{L^p_tW^{1,q}_x} := &~{}  \mathcal J_{p,q,N_\text{test},M_\text{test}}[u_{\text{DNN},\#}-u], \label{LpW1q-error}\\
 \hbox{error}_{L^{\infty}_tH^1} := &~{} \mathcal J_{\infty, H^1,N_\text{test},M_\text{test}}[u_{\text{DNN},\#}-u], \label{LinfH1-error}
 \end{align}
 where in \eqref{Sprime-error} we take the maximum over a suitable grid of values of $q$ that makes the pair $(p,q)$ Schr\"odinger admissible and $(p,q) = (8,4)$ in \eqref{LpW1q-error}. 
 
 {\color{black}For sake of completeness, in the following computations the $L_{t,x}^2$ error is also considered, which is a standard error in numerical analysis, defined as
 \begin{equation}\label{L2-error}
 \hbox{error}_{L_{t,x}^2} := ~{} \left(\frac{2T}{M_{\text{test}}} \sum_{\ell=1}^{M_{\text{test}}} \frac{2R}{N_{\text{test}}} \sum_{j=1}^{N_{\text{test}}} |u_{\text{DNN},\#}(t_\ell,x_j)-u(t_\ell,x_j)|^2\right)^{1/2},
 \end{equation}
 where we recall that $(t_{\ell})_{\ell=1}^{M_{\text{test}}} \subset [-T,T]$ and $(x_j)_{j=1}^{N_{\text{test}}} \subset [-R,R]$.
 }
 \end{enumerate}

 \begin{remark}[On the approximation of $v(t,\cdot)$] Recall that $v(t,\cdot)$ is given by
 \[
\mathcal F_{\xi \to x}^{-1} \left( e^{-i t\xi^2} \mathcal F_{x\to \xi}(u_0-u_{\text{DNN},\#}(0))(\xi)\right).
\]
We will approximate $v(t_{\ell},x_{j})$ by using the \url{fft} package in Pytorch, that uses the fast Fourier transform (FFT) to obtain a numerical approximation of the (continuous) Fourier transform $\mathcal F_{x \to \xi}$ and its inverse $\mathcal F_{\xi \to x}^{-1}$. Thus the value of $\hat v(t_\ell,x_j)$ can be obtained naturally by the FFT approximation of the Fourier transform and its inverse in the definition of $v(t_\ell,x_j)$.

 \end{remark}
 
 {\color{black} \begin{remark}[On the approximative norms] \label{rem:loss} For the loss function \eqref{eq:loss} we choose all the weights equal to 1, mimicking the root mean squared error (RMSE). However, to accurately approximate the norms of each error term, we consider the size of the domain over the time-space region $[-T,T]\times[-R,R]$ in the computation of $\widetilde{A}$, $A$, $B$, and the errors error$_{\mathcal C'}$, error$_{L_t^p W_x^{1,q}}$, and error$_{L_t^\infty H^1}$ defined in \eqref{Sprime-error}-\eqref{LinfH1-error}. 
 
 More precisely, we consider the weights in order to approximate integrals as a classical (right) Riemann sum: For $f$ defined in $[a,b]$,
 \[
 \int_a^b f(x) dx \approx \frac{1}{N} \sum_{j=1}^N (b-a) f(x_j),
 \]
 where $x_j = j \frac{b-a}N + a$. This can be generalized in order to approximate $\|f\|_{L^p_t L^q_x}.$

In the loss function, the derivative terms are weighted by a factor of $10^{-3}$ to balance the contributions of the different components and to mitigate the error introduced by the derivative approximation via the FFT. Thus, for example, the term $ \mathcal J_{p,q,N_5,M_5} [\hat v ]$ included in the loss function will take the form
\[
 \mathcal J_{p,q,N_5,M_5} [\hat v ] =~{} \left( \frac1{M_5}\sum_{\ell=1}^{M_5} \left( \frac1{N_5} \sum_{j=1}^{N_5} \left(|\hat v(t_{5,\ell},x_{5,j})|^q+ 10^{-3} \cdot |\partial_x \hat v(t_{5,\ell},x_{5,j})|^q \right) \right)^{p/q} \right)^{1/p},
\]
and similarly for $ \mathcal J_{p',q',N_4,M_4} [\mathcal E[u_{\text{DNN},\#}]] $.
 \end{remark}

 \begin{remark}[On the activation function] The sine activation function is chosen in this work due to the oscillate behavior on the real and imaginary parts of the considered examples. However, it also induces periodicity in the approximate solution, which may lead to inaccuracies near the boundaries, as the model tries to replicate the solution profile outside the space domain of interest. 

In our simulations, we observed that replacing the activation function of the last hidden layer with a hyperbolic tangent significantly mitigates this artificial periodicity, and keeps the oscillate behavior. 
 \end{remark}
}
\subsection{List of solitary waves}
 
Following \cite{AFM}, let $c, \nu > 0$ be fixed scaling parameters. The considered solutions will be: the solitary wave
\begin{equation}\label{solitonQ}
Q(t,x) = e^{i\frac \nu2 x - i\left(\frac {\nu^2}4 - c\right) t} \sqrt{2c} \hspace{.05cm}\hbox{sech}\left(\sqrt{c}(x-\nu t)\right).
\end{equation}
(notice that if $\nu= 0$, then the solitary-wave solution remains to a standing wave solution), {\color{black} the 2-Solitonic solution \cite{Hir}
\begin{equation}\label{2solitonQ}
Q_2(t,x) = \sqrt{2} \frac{G(t,x)}{F(t,x)},
\end{equation}
where 
\[
\begin{aligned}
F(t,x) &= 1 + \frac{e^{\eta_1 + \overline \eta_1}}{(\lambda_1 + \overline \lambda_1)^2} + \frac{e^{\eta_1 + \overline \eta_2}}{(\lambda_1 + \overline \lambda_2)^2}+ \frac{e^{\eta_2 + \overline \eta_1}}{(\lambda_2 + \overline \lambda_1)^2}+ \frac{e^{\eta_2 + \overline \eta_2}}{(\lambda_2 + \overline \lambda_2)^2} \\
&\hspace{.5cm}+ \frac{(\lambda_1 - \lambda_2)^2 (\overline \lambda_1 - \overline \lambda_2)^2 e^{\eta_1 + \eta_2 + \overline \eta_1 + \overline \eta_2}}{\left((\lambda_1 + \overline \lambda_1)(\lambda_1 + \overline \lambda_2)(\lambda_2 + \overline \lambda_1)(\lambda_2 + \overline \lambda_2)\right)^2},
\end{aligned}
\]
and
\[
G(t,x) = e^{\eta_1} + e^{\eta_2} + \frac{(\lambda_1 - \lambda_2)^2 e^{\eta_1 + \eta_2 + \overline \eta_1}}{\left((\lambda_1 + \overline \lambda_1)(\lambda_2 + \overline \lambda_1)\right)^2} + \frac{(\lambda_1 - \lambda_2)^2 e^{\eta_1 + \eta_2 + \overline \eta_2}}{\left((\lambda_1 + \overline \lambda_2)(\lambda_2 + \overline \lambda_2)\right)^2},
\]
where $\eta_j = \lambda_j x + i \lambda_j^2 t - \eta_j^0$. $\lambda_j$ and $\eta_j^0$ are complex constants describing the amplitude and phase of the $j$-soliton.} The Peregrine breather \cite{Per}
\begin{equation}\label{BP}
B_\text{P}(t,x) := e^{it}\left(1 - \frac{4(1+2it)}{1+4t^2+2x^2}\right),
\end{equation}
and the Kuznetsov-Ma (KM)  breather \cite{Kuz, Ma}
\begin{equation}\label{BKM}
B_\text{KM}(t,x) := e^{it}\left(1 - \sqrt 2 \tilde\beta \frac{\tilde\beta^2 \cos(\tilde\alpha t) + i \tilde\alpha \sin(\tilde\alpha t)}{\tilde\alpha \cosh(\tilde\beta x) - \sqrt 2 \tilde\beta \cos(\tilde\alpha t)}\right),
\end{equation}
with
\[
\tilde \alpha := (8a(2a-1))^{\frac 12}, \hspace{1cm} \tilde \beta:=(2(2a-1))^{\frac 12}, \quad a > \frac 12,
\]
Notice that the KM breather does not converge to zero at infinity, but after the subtraction of the background, it becomes a localized solution. The stability of the KM breather (and other breathers) has been studied in \cite{Mu17,AFM19,AFM}. 

We have chosen $N=100$ and $M=100$ in every error computation, representing a grid of $10^4$ space-time evaluation points. The training known data will be the exact soliton or solitary wave solution {\color{black}at time $t=0$, represented in the second term in the right hand side of \eqref{eq:loss}. All the other points used for training are not known data itself, but they are used by the knowledge in the physics of the NLS model.}

\subsection{Solitons} We present now our first results for the case of the soliton $Q$ in \eqref{solitonQ}.  We choose parameters $c = \nu = 1$ but we ensure that our results are very similar for other similar values of $c$ and $\nu$. The space region is {\color{black}$[-10,10]$ with $N_4=N_5=32$ points}, the time region {\color{black}$[-2,2]$ with $M_4=M_5=32$}, {\color{black} and we take $H=4$ hidden layers with $n_H=20$ neurons each}. Our results are summarized in Fig.  \ref{fig:1}, where the continuous line represents the exact solution and the dashed line is the solution computed using the proposed PINNs minimization procedure. In particular, Fig. \ref{fig:1-1} and \ref{fig:1-2} present respectively the real and imaginary parts of the computed soliton solution for three different times.  

\begin{figure}[!ht]
\centering
\begin{subfigure}[t]{0.45\textwidth}
	\centering
	\includegraphics[width=\textwidth]{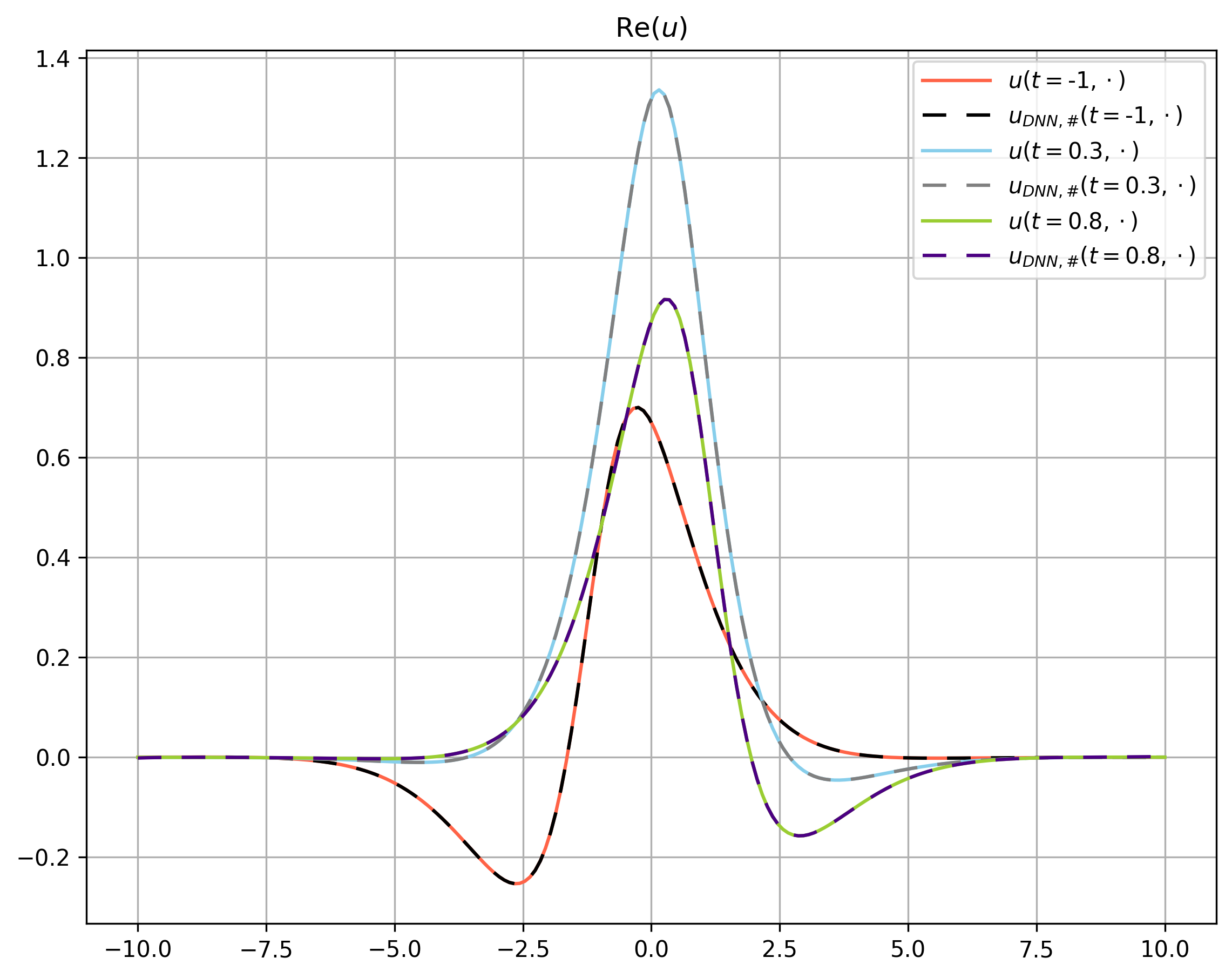}
	\caption{Real part of the exact (continuous line) and approximate (dashed line) soliton solution at times $t=$ -2, 0.5 and 1.5.}
	\label{fig:1-1}
\end{subfigure}
\hspace{.2cm}
\begin{subfigure}[t]{0.45\textwidth}
	\centering
	\includegraphics[width=\textwidth]{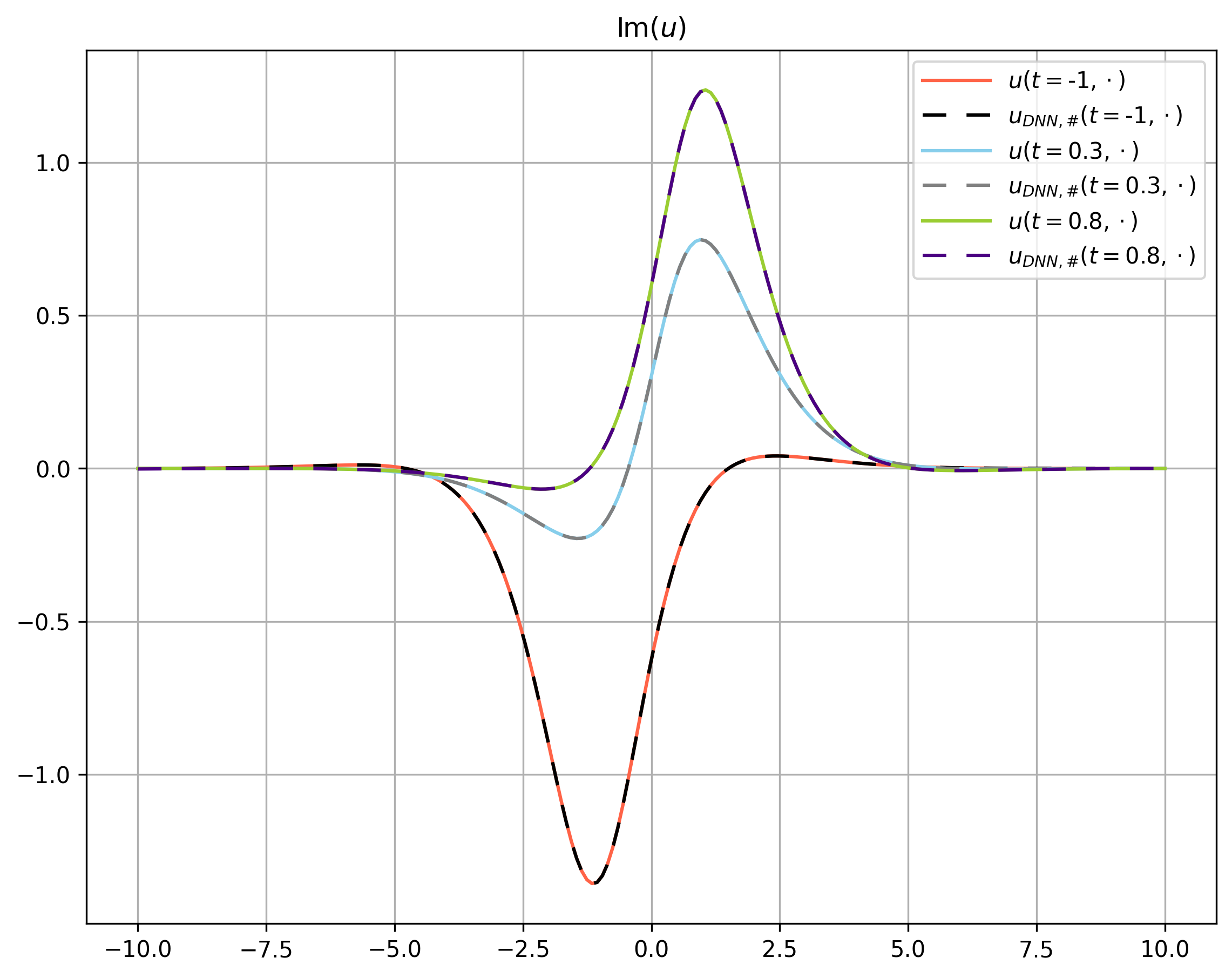}
	\caption{Imaginary part of the exact (continuous line) and approximate (dashed line) soliton solution at times $t=$ -2, 0.5 and 1.5.}
	\label{fig:1-2}
\end{subfigure}
\caption{Approximation of the soliton solution in the case $c=\nu=1$.}
\label{fig:1}
\end{figure}

In Fig. \ref{fig:2}, the evolution of the computed constants $\widetilde A$, $A$, $B$, and the error of the approximation is presented in terms of the number of iterations of the numerical algorithm for the soliton solution with values $c=\nu=1$. It is noticed in Fig. \ref{fig:2-1} that the constant $\widetilde A$, measuring the $H^1$ difference between the approximate solution at time zero and the initial data decreases in time, revealing that to get a better approximation for all posterior times, it is necessary to get very small errors at the initial time. Concerning the evolution of the computed values for the constants $A$ and $B$, graphed in Figs. \ref{fig:2-2} and \ref{fig:2-3}, they stabilize at some $O(1)$ value during a big part of the numerical procedure. The error function \eqref{eq:loss}, graphed in Fig. \ref{fig:2-4}, naturally decreases through computations to achieve an $O(10^{-3})$ value.

\begin{figure}[!ht]
\centering
\begin{subfigure}[t]{0.4\textwidth}
	\centering
	\includegraphics[width=\textwidth]{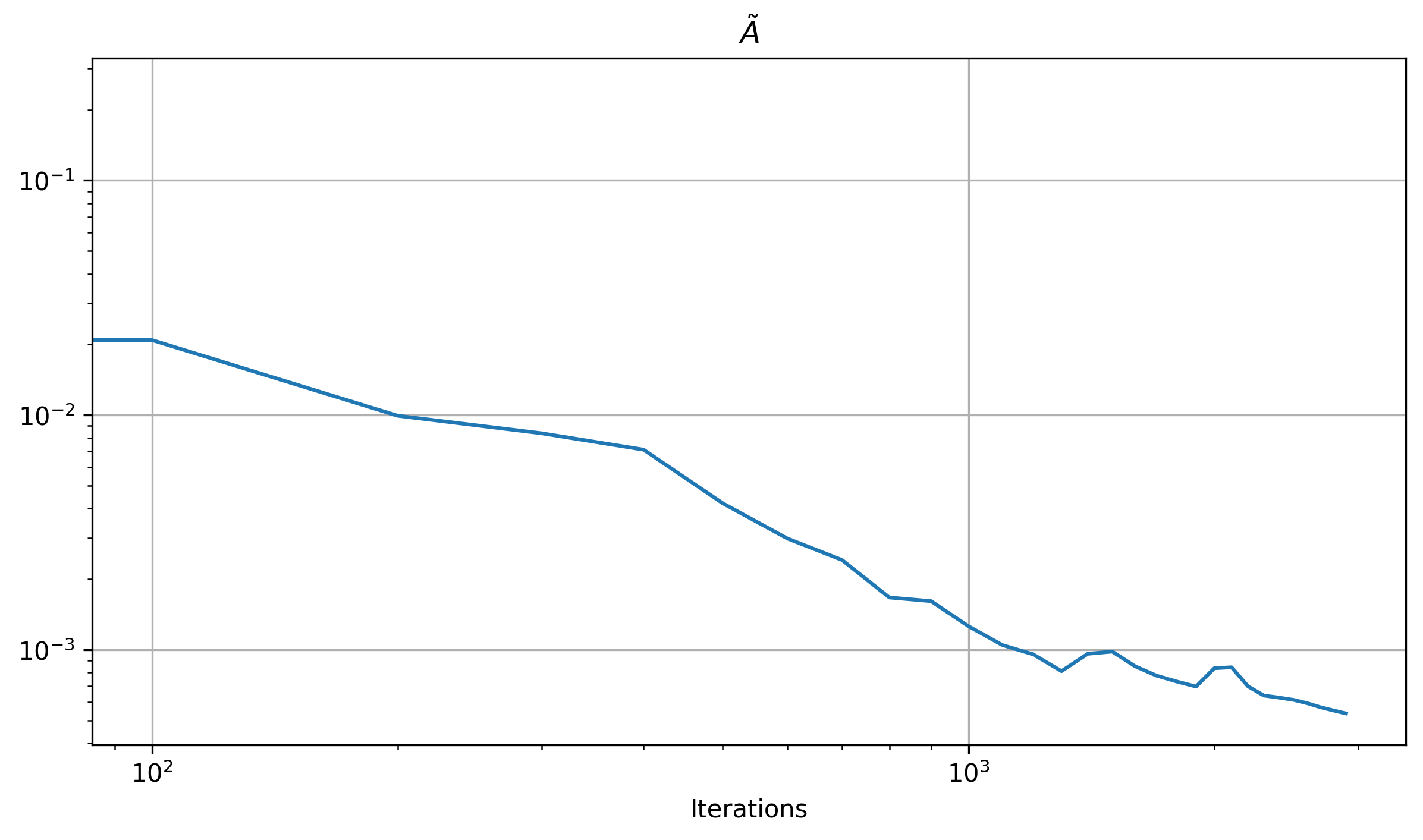}
	\caption{Evolution of the computed constant $\widetilde A$.}
	\label{fig:2-1}
\end{subfigure}
\hspace{.2cm}
\begin{subfigure}[t]{0.4\textwidth}
	\centering
	\includegraphics[width=\textwidth]{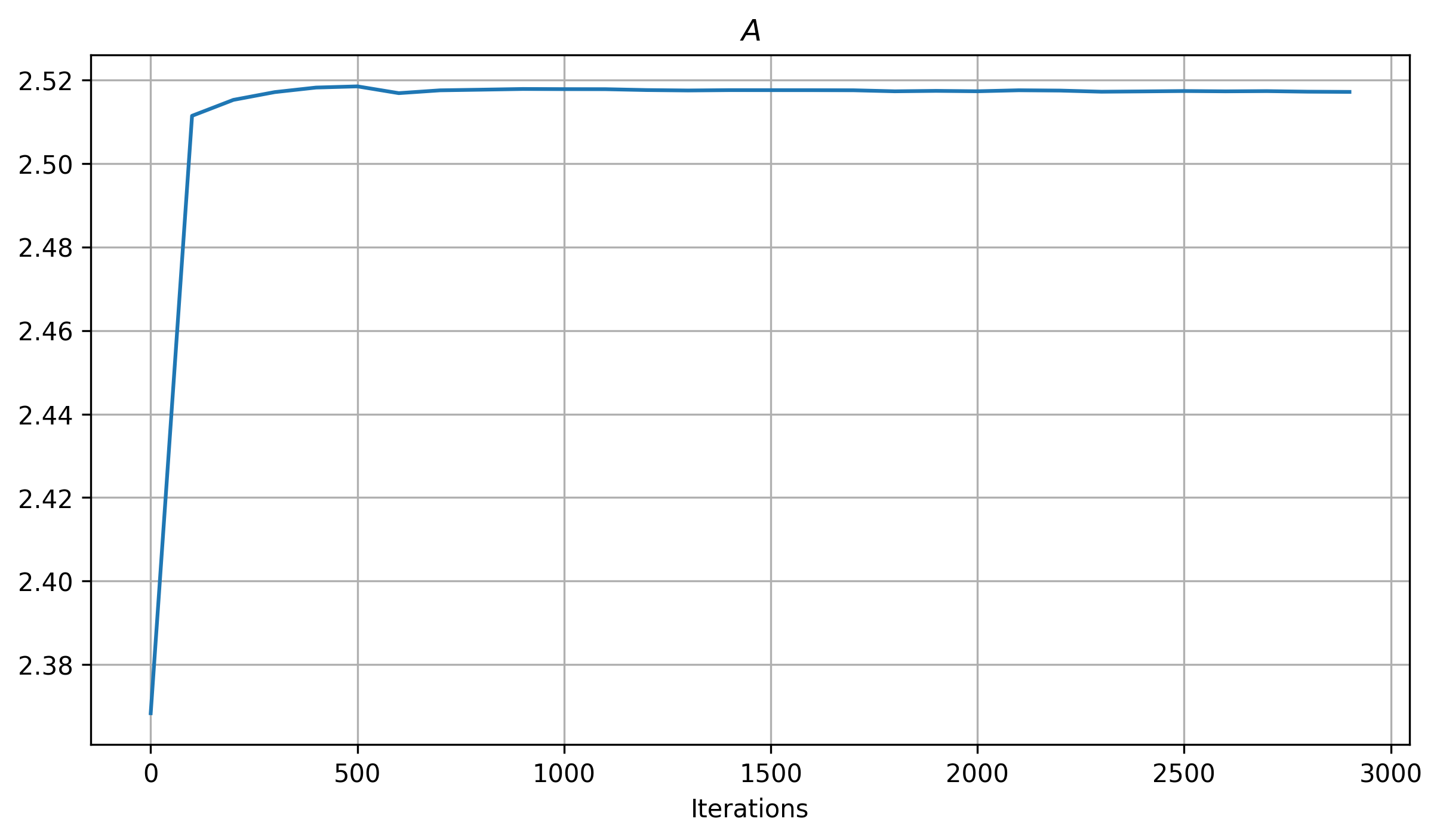}
	\caption{Evolution of the computed constant $A$.}
	\label{fig:2-2}
\end{subfigure}
\begin{subfigure}[t]{0.4\textwidth}
	\centering
	\includegraphics[width=\textwidth]{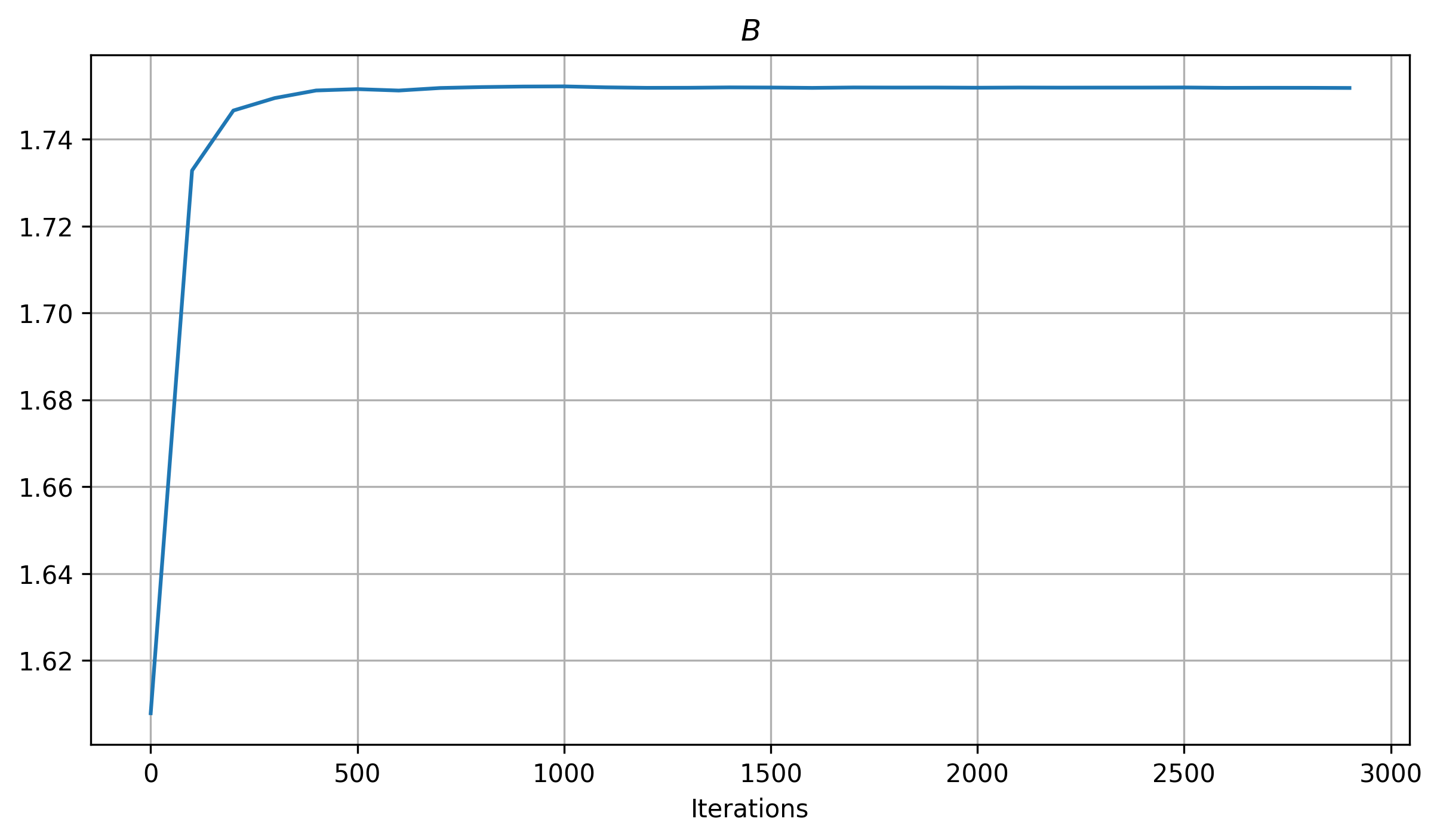}
	\caption{Evolution of the computed constant $B$.}
	\label{fig:2-3}
\end{subfigure}
\hspace{.2cm}
\begin{subfigure}[t]{0.4\textwidth}
	\centering
	\includegraphics[width=\textwidth]{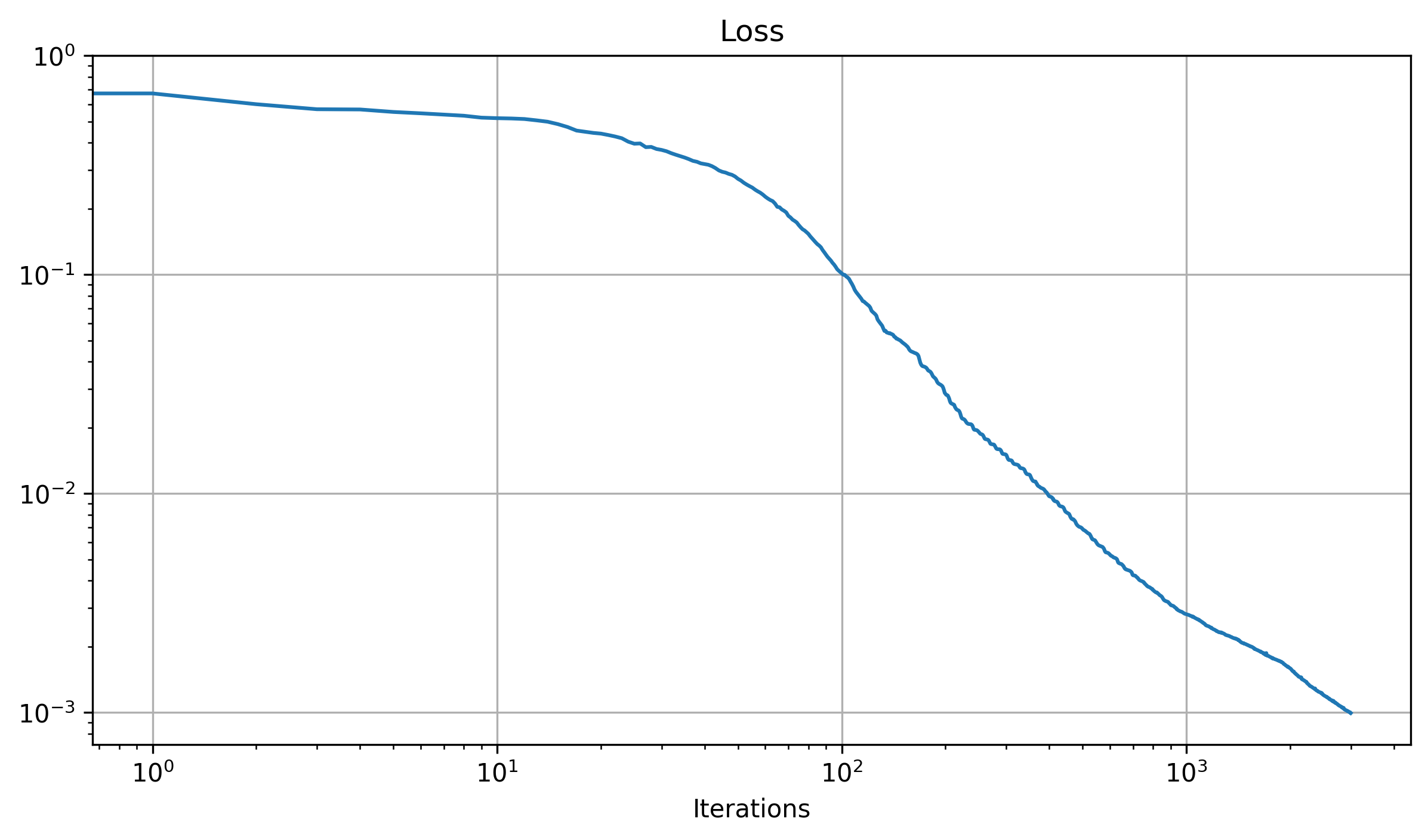}
	\caption{Error of the numerical approximation.}
	\label{fig:2-4}
\end{subfigure}
\caption{Evolution of the constants $\widetilde A, A$ and $B$ and the error \eqref{eq:loss} during the optimization algorithm.}
\label{fig:2}
\end{figure}

Table \ref{tab:iters} summarizes the elapsed time and the evolution through the iterations of the  associated norms of the difference between the exact and the approximate solutions in the case of the soliton with values $c=\nu=1$. The time interval in these cases is $[-2,2]$, and the space interval is $[-8,8]$. These norms are computed using \eqref{Sprime-error},\eqref{LpW1q-error} and \eqref{LinfH1-error} with uniform weights equal 1, and uniform $N_\text{test}$ and $M_\text{test}$ given by 100. As a conclusion, from Table \ref{tab:iters} we deduce that, during the numerical computations the norms $L^{t}_{\infty}H^1$ and $\mathcal C'$ involved in Theorem \ref{MT} decreased to a value of order {\color{black}$10^{-2}$, while the norm $L^{p}_t W^{1,q}_x$ to order $10^{-3}$}, and {\color{black}the algorithm} takes half a minute. 

\begin{table}[!ht]
\begin{tabular}{lllll}
\hline
Iterations & Elapsed time [s]  & error$_{L^{p}_tW^{1,q}_x}$ & error$_{L^{\infty}_tH^1}$ & error$_{\mathcal{C}'}$  \\ \hline
\hline
100        & {\color{black}1.169}               & {\color{black}1.080}                     & {\color{black}1.951}                     & {\color{black}1.951}                  \\
500        & {\color{black}5.569}           & {\color{black}0.125}                      & {\color{black}0.241}                     & {\color{black}0.241}                  \\
1000       & {\color{black}10.262}              & $3.507 \times 10^{-2}$                      & $6.813 \times 10^{-2}$                    & $6.813 \times 10^{-2}$                 \\
3000       & {\color{black}30.445}              & $8.761 \times 10^{-3}$    & $1.494 \times 10^{-2}$                     & $1.494 \times 10^{-2}$                   \\
\hline
\end{tabular}
\caption{{\color{black}Average} values of the elapsed time and norms involved in Theorem \ref{MT}, for five {\color{black}independent realizations of the algorithm,} in the solitonic case.}
\label{tab:iters}
\end{table}


Fig. \ref{fig:Estimation-Cprime} computes the $L_t^pW^{1,q}_x$ error \eqref{LpW1q-error} for {\color{black} one of} the trained DNNs $u_{\hbox{DNN},\#}$ for different admissible pairs $(p,q)$. The values of $q$ are chosen uniformly as a grid of {\color{black}597 points between 2 and 300, namely, $q = \frac k2$ for $k = 4,\ldots,600$}. {\color{black} Fig. \ref{fig:Estimation-Cprime} suggest that the maximum is attached in $q=2$, that is, the norm $\mathcal C'$ is governed by the norm $L^{\infty}_t H^1$. This can be also seen in each table presented in this work.  Additionally, after the maximum, the norms have a local minimum in $q=4$, which is indeed the admissible pair we are using to minimize the linear evolution of the difference. The point $q=4$ is followed by a broad range of $q$ where the error increases, until it eventually drops sharply to zero at a certain point.}


\begin{figure}[!ht]
	\centering
	\includegraphics[width=0.5\textwidth]{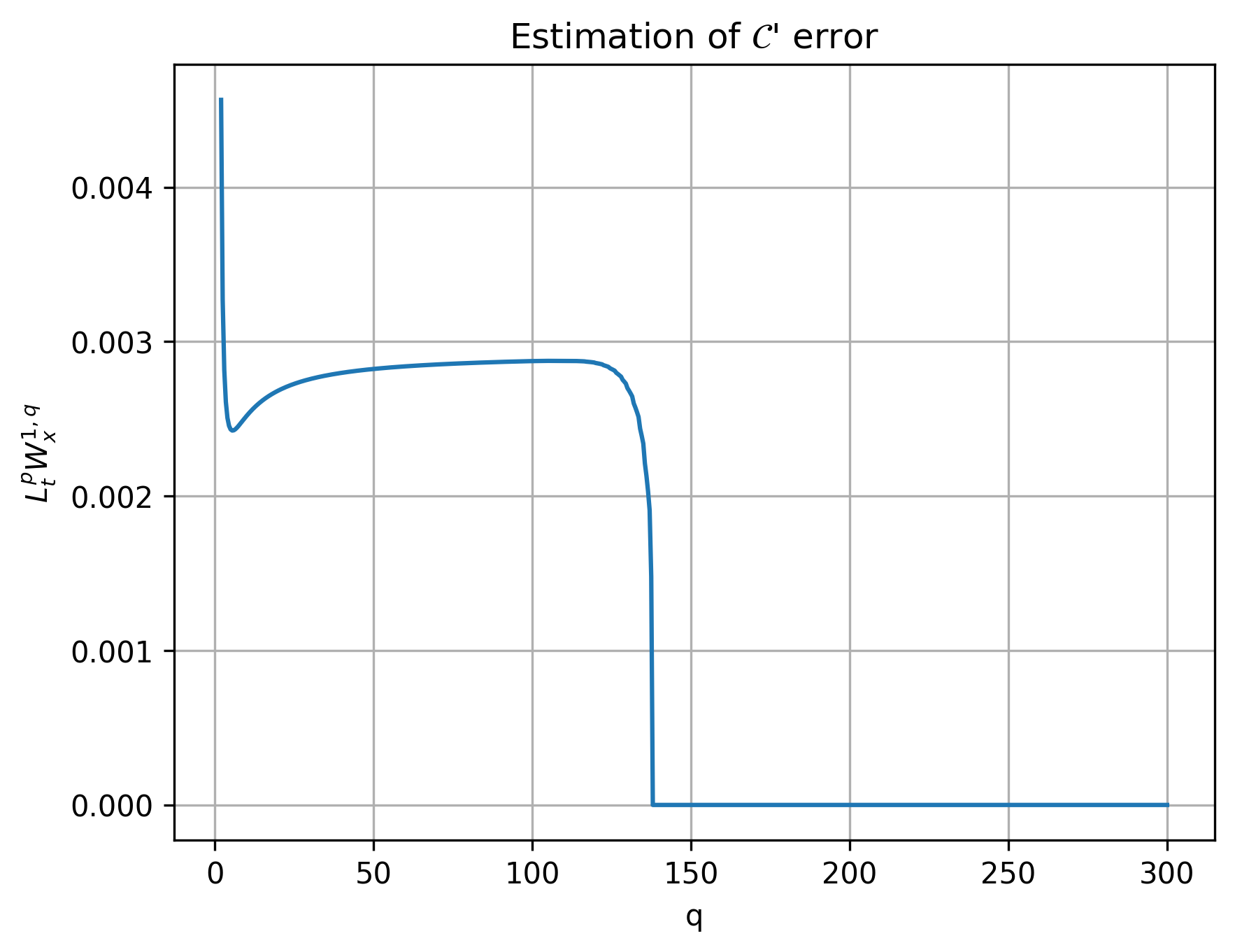}
	\caption{$L_t^pW_x^{1,q}$ error for different admisible pairs $(p,q)$ in the solitonic case. The x-axis represents $597$ points of $q$ uniformly chosen between 2 and 300,  and the y-axis represents the $L_t^pW_x^{1,q}$ error, where $p = \frac{4q}{q-2}$. The $\mathcal{C}'$ error will be computed as the maximum of the $L_t^pW_x^{1,q}$ error in the grid of $q$.}
	\label{fig:Estimation-Cprime}
\end{figure}


{\color{black}\subsection{2-Soliton} 
Another solution which satisfies the hypothesis of the main Theorem is the 2-soliton. In this section we present the results for the solution presented in \eqref{2solitonQ}. For this end we choose $\lambda_1 = 1 + i$, $\lambda_2 = 1 - i$ and $\eta_1^0 = \eta_2^0 = 0$. The space region is $[-10,10]$ with $N_4=128$ and $N_5=64$, the time region $[-1,1]$ with $M_4=64$, and $M_5=32$, and we take $H=4$ with $n_H=40$ neurons each. The results are summarized in Fig. \ref{fig:2sol}, where the continuous line represents the exact solution and the dashed line is the solution computed using the proposed PINNs minimization procedure. In particular, Fig. \ref{fig:2sol-real} and \ref{fig:2sol-imag} present respectively the real and imaginary parts of the computed 2-soliton solution for three different times.

\begin{figure}[!ht]
\centering
\begin{subfigure}[t]{0.45\textwidth}
	\centering
	\includegraphics[width=\textwidth]{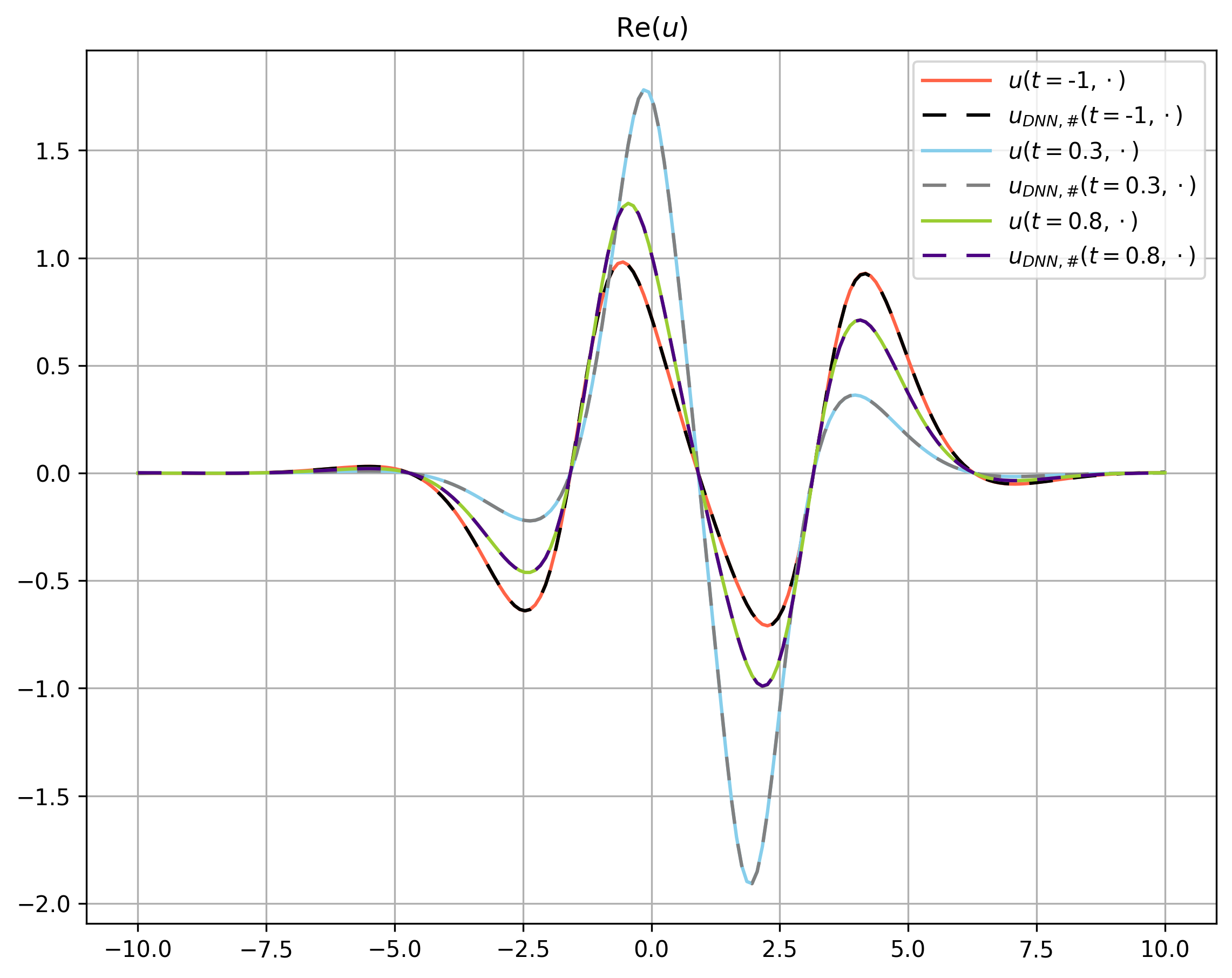}
	\caption{Real part of the exact (continuous line) and approximate (dashed line) 2-soliton solution at times $t=$ -1, 0.3 and 0.8.}
	\label{fig:2sol-real}
\end{subfigure}
\hspace{.2cm}
\begin{subfigure}[t]{0.45\textwidth}
	\centering
	\includegraphics[width=\textwidth]{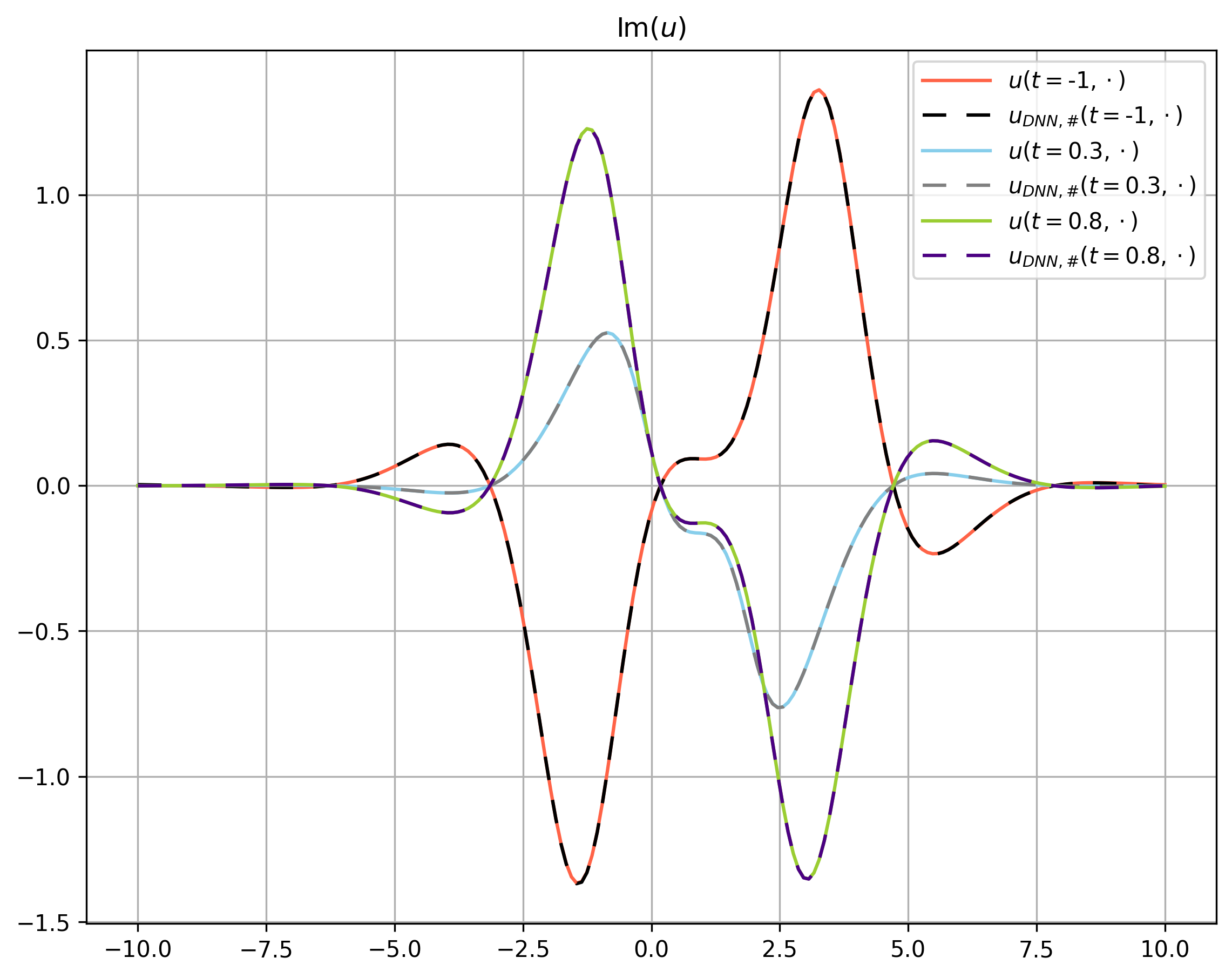}
	\caption{Imaginary part of the exact (continuous line) and approximate (dashed line) 2-soliton solution at times $t=$ -1, 0.3 and 0.8.}
	\label{fig:2sol-imag}
\end{subfigure}
\caption{Approximation of the 2-soliton solution.}
\label{fig:2sol}
\end{figure}

Table \ref{tab:2sol} summarizes the elapsed time and the evolution through the iterations of the associated norms of the difference between the exact and the approximate solutions in the case of the 2-soliton with $\lambda_1$, $\lambda_2$, $\eta_0^1$ and $\eta_0^2$ as defined as above. These norms are computed using \eqref{Sprime-error},\eqref{LpW1q-error} and \eqref{LinfH1-error} with uniform weights equal 1, and uniform $N_\text{test}$ and $M_\text{test}$ given by 100. As a conclusion, from Table \ref{tab:2sol} we deduce that, during the numerical computations all the norms involved in Theorem \ref{MT} decreased to a value of order {\color{black}$10^{-2}$}, and {\color{black}the algorithm} takes less than three minutes. 

\begin{table}[!ht]
\begin{tabular}{lllll}
\hline
Iterations & Elapsed time [s]  & error$_{L^{p}_tW^{1,q}_x}$ & error$_{L^{\infty}_tH^1}$ & error$_{\mathcal{C}'}$  \\ \hline
\hline
100        & {\color{black}6.314}               & {\color{black}2.524}                     & {\color{black}4.605}                     & {\color{black}4.605}                  \\
500        & {\color{black}30.145}           & {\color{black}0.223}                      & {\color{black}0.483}                     & {\color{black}0.483}                  \\
1000       & {\color{black}61.821}              & $4.909 \times 10^{-2}$                    & $9.838 \times 10^{-2}$                   & $9.838 \times 10^{-2}$                 \\
3000       & {\color{black}153.827}              & $2.183 \times 10^{-2}$    & $3.682 \times 10^{-2}$                    & $3.682 \times 10^{-2}$                   \\
\hline
\end{tabular}
\caption{{\color{black}Average} values of the elapsed time and norms involved in Theorem \ref{MT}, for five {\color{black}independent realizations of the algorithm,} in the 2-solitonic case.}
\label{tab:2sol}
\end{table}

In Fig. \ref{fig:2sol-2}, the evolution of the computed constants $\widetilde A$, $A$, $B$, and the error of the approximation is presented in terms of the number of iterations of the numerical algorithm for the KM solution. In this case  Figs. \ref{fig:2sol-2-2}-\ref{fig:2sol-2-3} are in agreement with the description already found in the soliton case, and Figs. \ref{fig:2sol-2-1} and \ref{fig:2sol-2-4} are not as good as the solitonic case, but they achieve an $O(10^{-2})$ value.
\begin{figure}[!ht]
\centering
\begin{subfigure}[t]{0.4\textwidth}
	\centering
	\includegraphics[width=\textwidth]{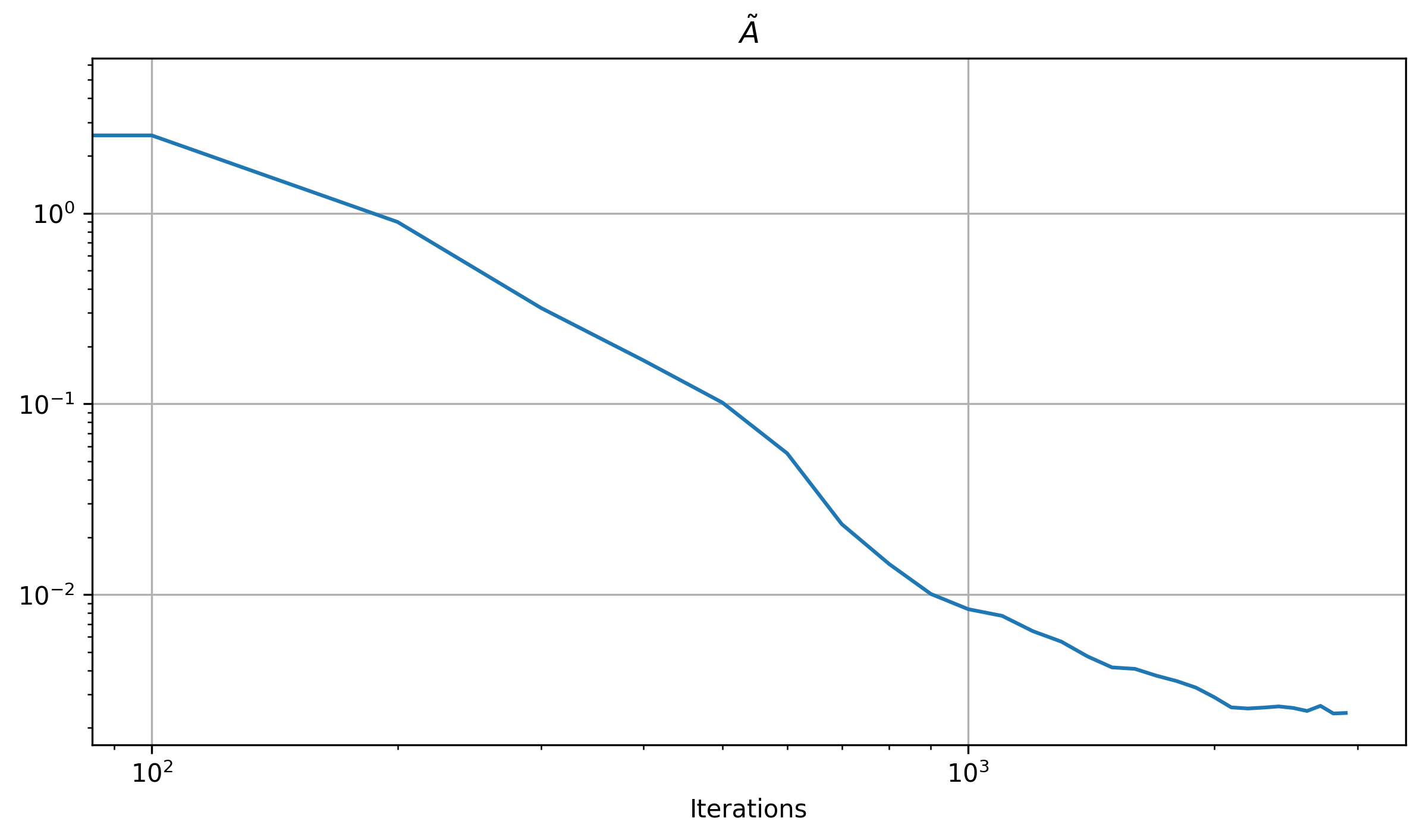}
	\caption{Evolution of the computed constant $\widetilde A$.}
	\label{fig:2sol-2-1}
\end{subfigure}
\hspace{.2cm}
\begin{subfigure}[t]{0.4\textwidth}
	\centering
	\includegraphics[width=\textwidth]{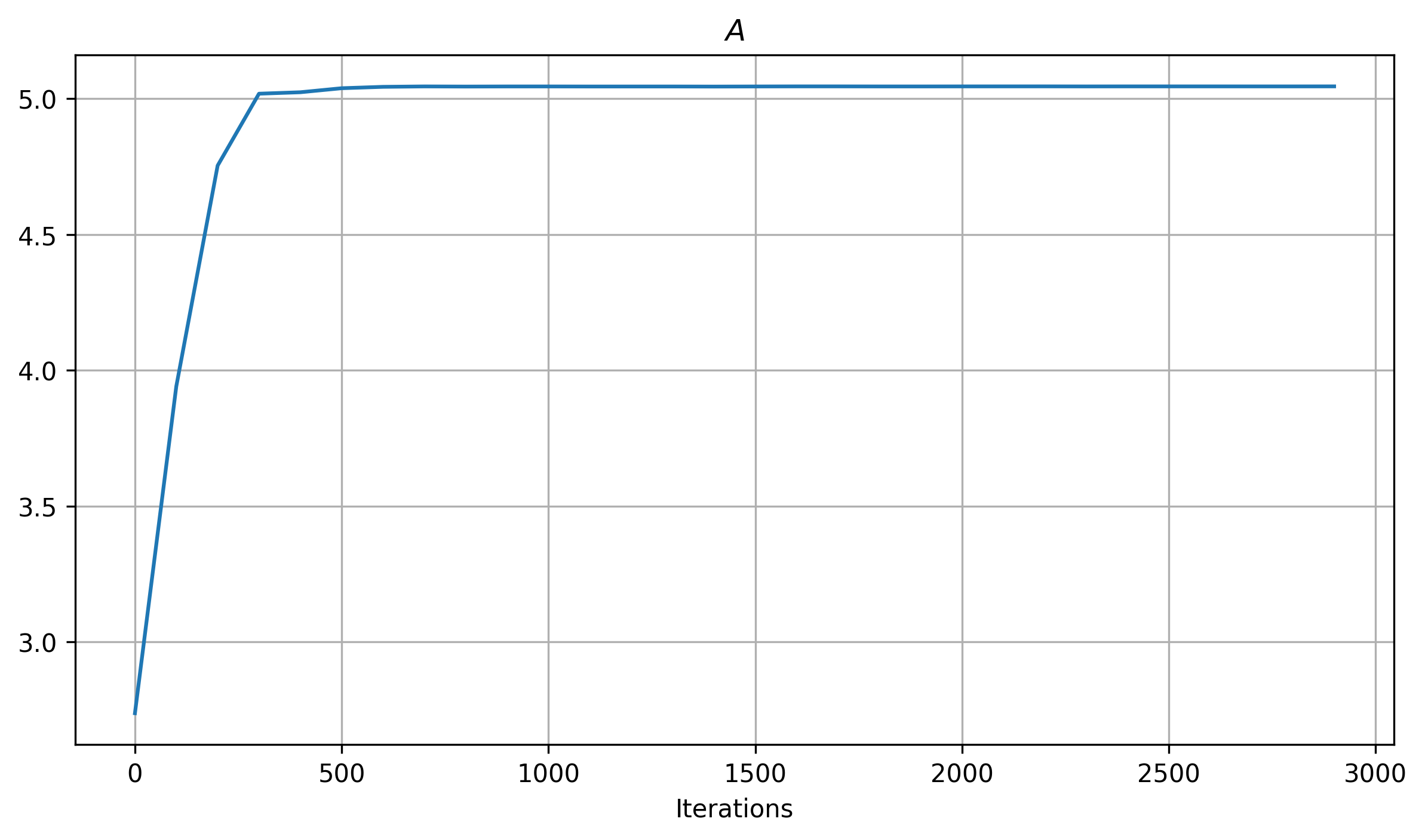}
	\caption{Evolution of the computed constant $A$.}
	\label{fig:2sol-2-2}
\end{subfigure}
\begin{subfigure}[t]{0.4\textwidth}
	\centering
	\includegraphics[width=\textwidth]{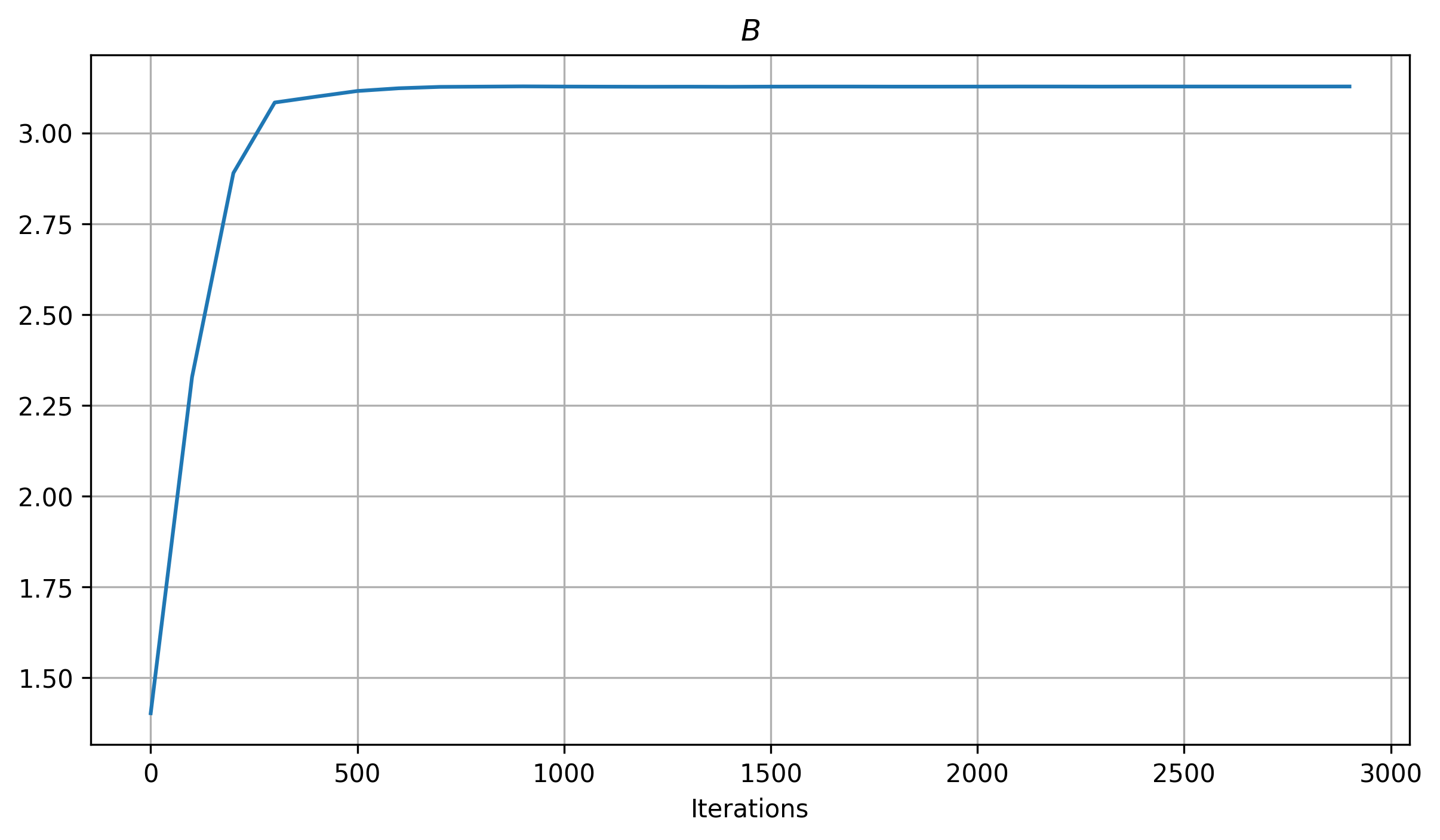}
	\caption{Evolution of the computed constant $B$.}
	\label{fig:2sol-2-3}
\end{subfigure}
\hspace{.2cm}
\begin{subfigure}[t]{0.4\textwidth}
	\centering
	\includegraphics[width=\textwidth]{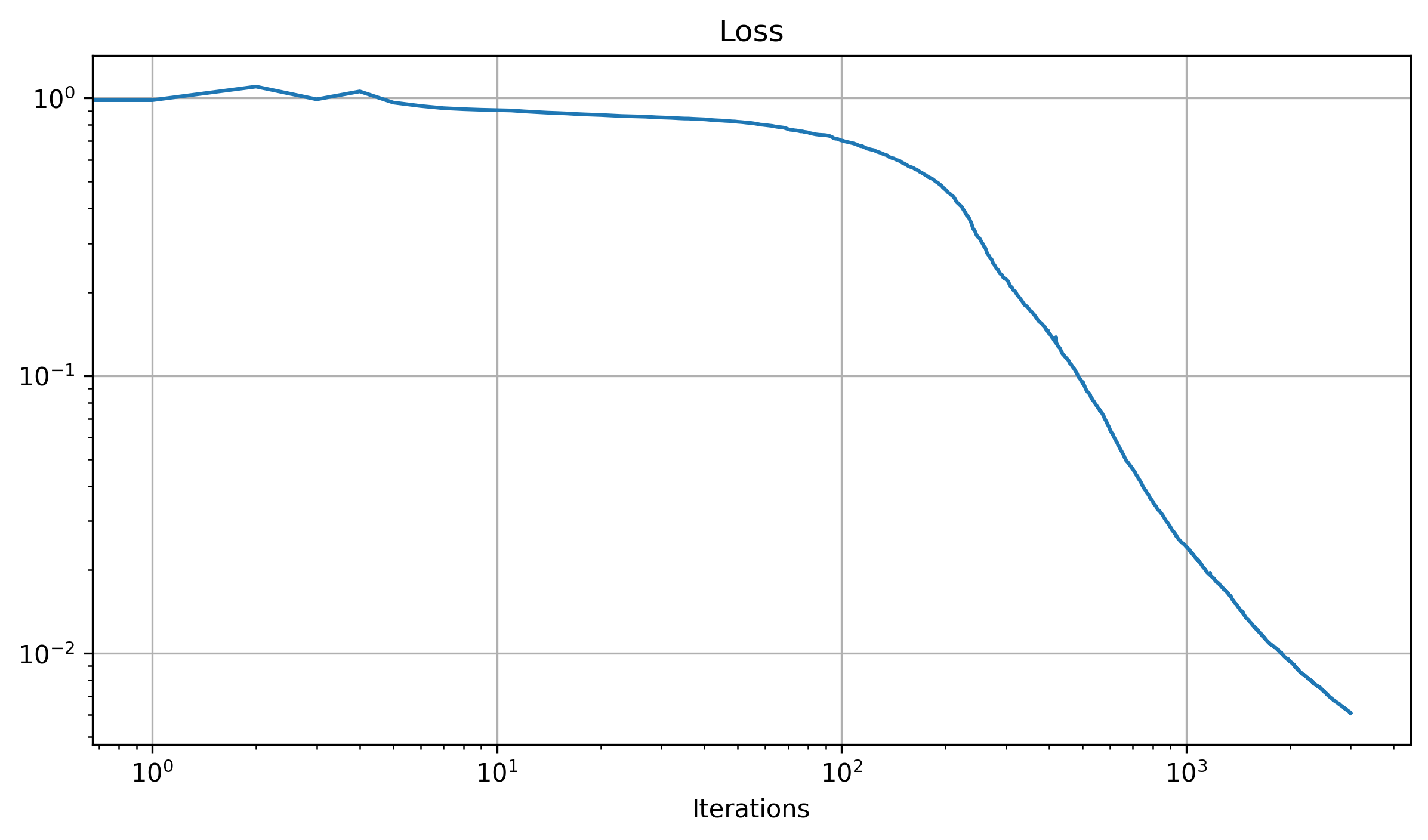}
	\caption{Error of the numerical approximation.}
	\label{fig:2sol-2-4}
\end{subfigure}
\caption{Evolution of the constants $\widetilde A, A$ and $B$ and the error \eqref{eq:loss} during the optimization algorithm for the 2-solitonic case.}
\label{fig:2sol-2}
\end{figure}

Finally, notice that the augmentation of points in the grids for the 2-soliton rather than the single soliton is due to the higher complexity on the solutions. This can be better appreciated in the absolute values of both solutions, as seen in Fig. \ref{fig:sol-2sol}. In particular Fig. \ref{fig:sol-2sol-1} shows the solitonic solution with $c=\nu=1$, while Fig. \ref{fig:sol-2sol-2} represents the 2-solitonic solution with parameters presented above.

\begin{figure}[!ht]
\centering
\begin{subfigure}[t]{0.45\textwidth}
	\centering
	\includegraphics[width=\textwidth]{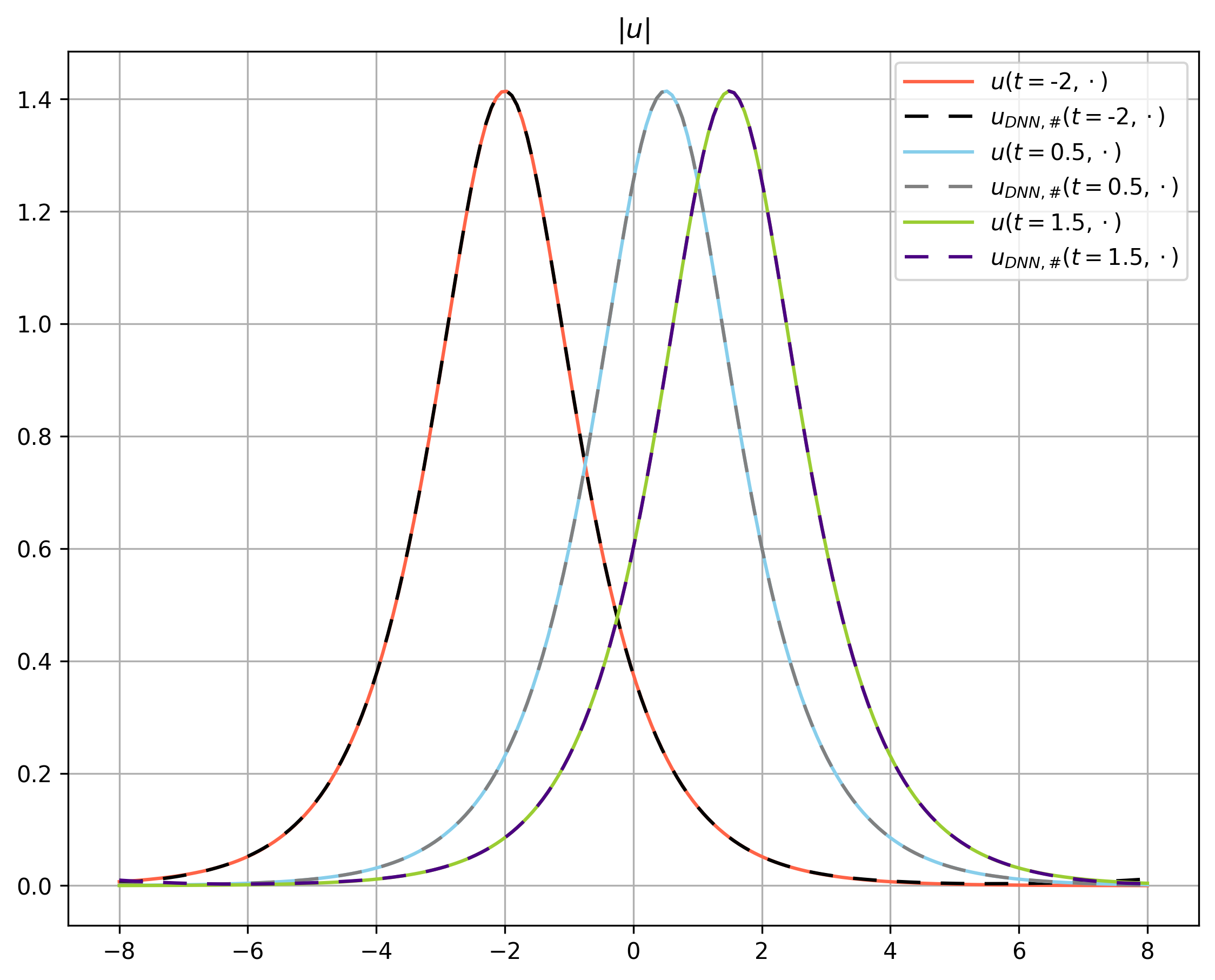}
	\caption{Absolute value of the exact (continuous line) and approximate (dashed line) soliton solution with $c=\nu=1$.}
	\label{fig:sol-2sol-1}
\end{subfigure}
\hspace{.2cm}
\begin{subfigure}[t]{0.45\textwidth}
	\centering
	\includegraphics[width=\textwidth]{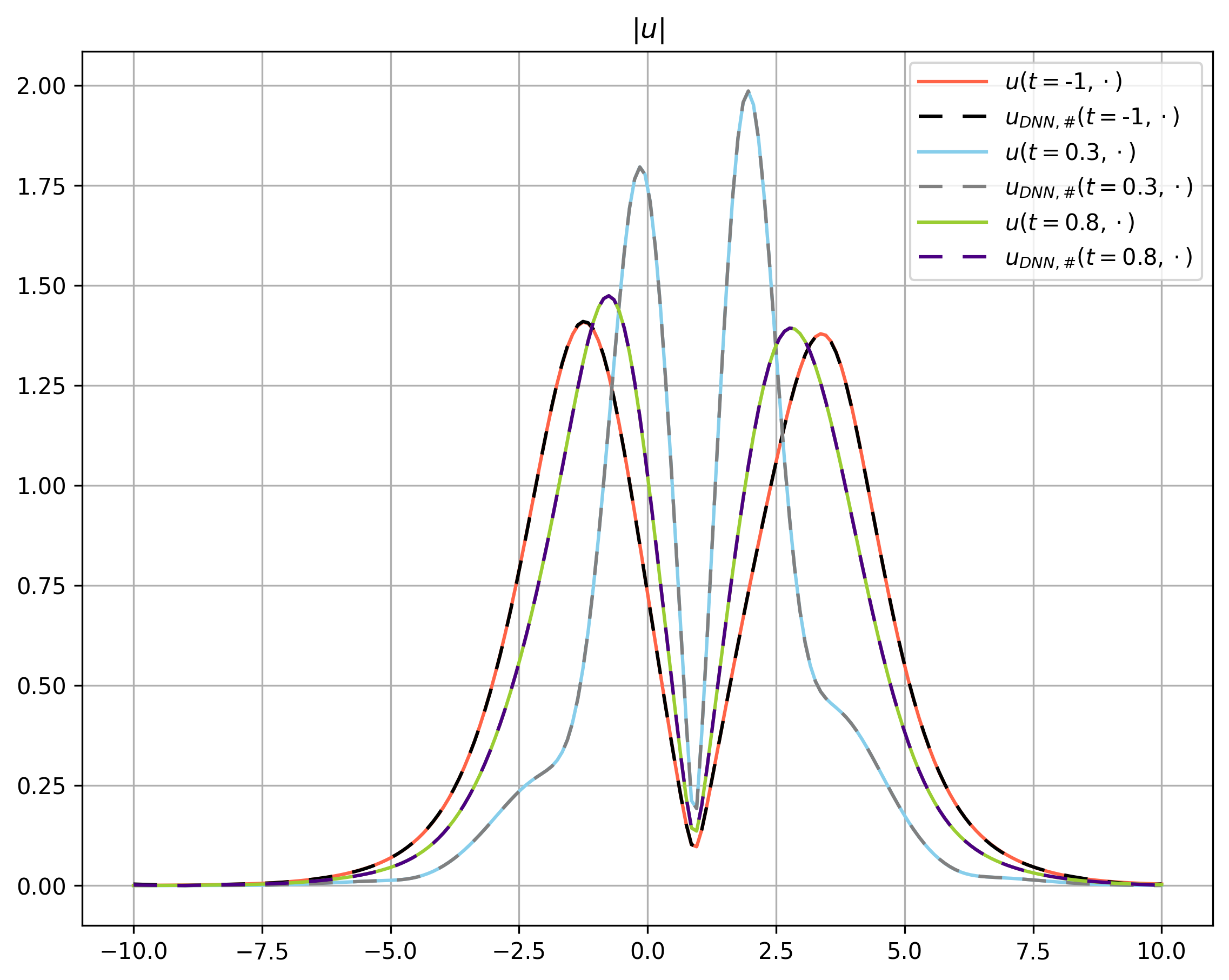}
	\caption{Absolute value of the exact (continuous line) and approximate (dashed line) 2-soliton solution.}
	\label{fig:sol-2sol-2}
\end{subfigure}
\caption{Absolute value of the solitonic and 2-solitonic solutions.}
\label{fig:sol-2sol}
\end{figure}

}

\subsection{Kuznetsov-Ma} 
Even if this breather solution is not in Sobolev spaces, one can perform a similar analysis since Theorem \ref{MT} only consider the difference of solutions. In the case of the KM breather $B_\text{KM}$ described in \eqref{BKM}, we have chosen the parameter $a= \frac34$. Notice that the larger is $a$, the more complicated are numerical simulations. {\color{black} In this case}, the space region is {\color{black}$[-5,5]$ with $N_4=128,$ and $N_5=64$ points}, the time region {\color{black} $[-1,1]$ with $M_4=M_5=32$}, {\color{black}and the hidden layers will be $H=4$ with $n_H=40$ neurons each}. Our results are summarized in Fig. \ref{fig:4}, where the continuous line represents the exact solution and the dashed line is the solution computed using the proposed PINNs minimization procedure. In particular, Fig. \ref{fig:4-1} and \ref{fig:4-2} present respectively the real and imaginary parts of the computed {\color{black}brather} solution for three different times.  

\begin{figure}[!ht]
\centering
\begin{subfigure}[t]{0.45\textwidth}
	\centering
	\includegraphics[width=\textwidth]{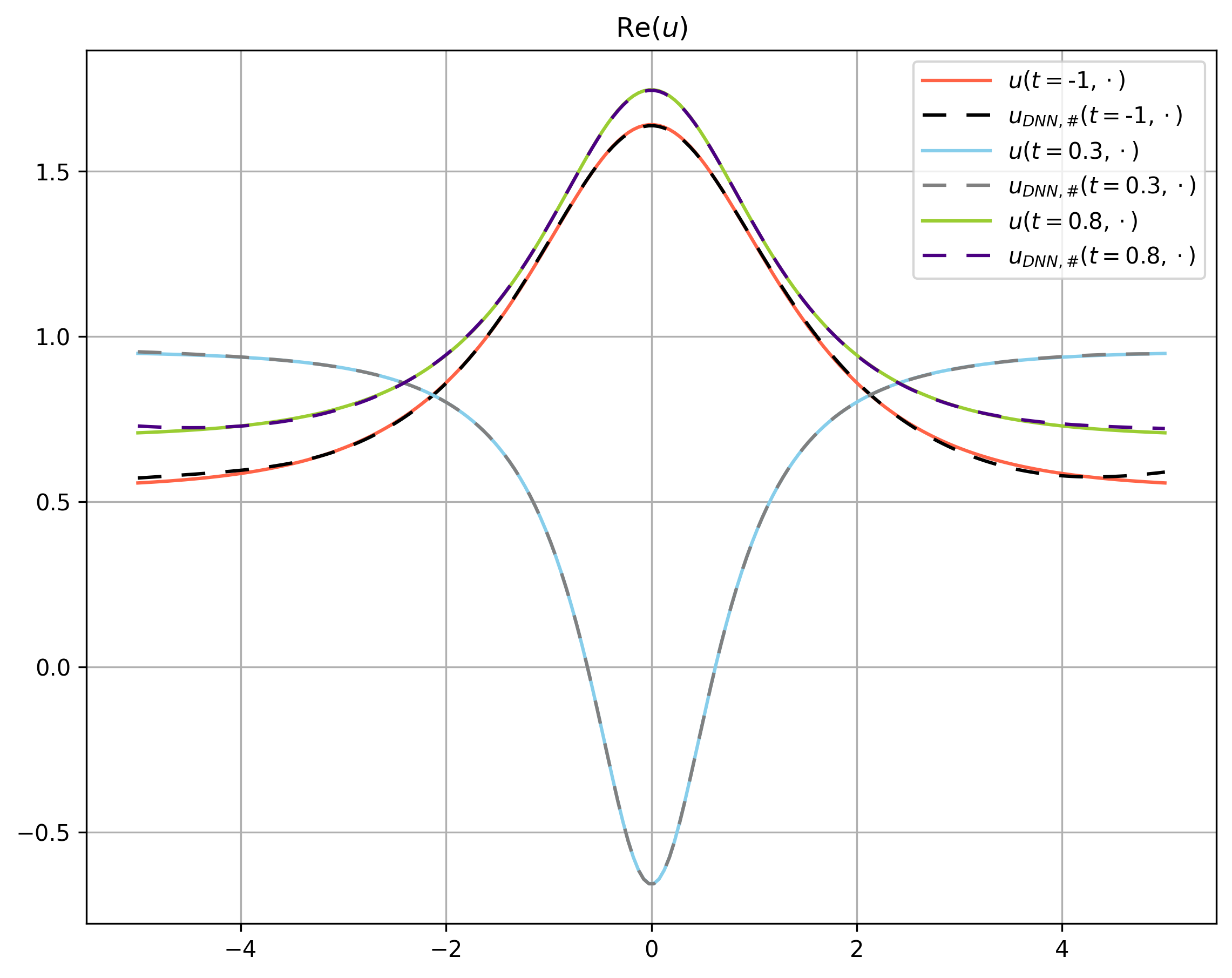}
	\caption{Real part of the exact (continuous line) and approximate (dashed line) KM breather at times $t=$ -1, 0.3 and 0.8.}
	\label{fig:4-1}
\end{subfigure}
\hspace{.2cm}
\begin{subfigure}[t]{0.45\textwidth}
	\centering
	\includegraphics[width=\textwidth]{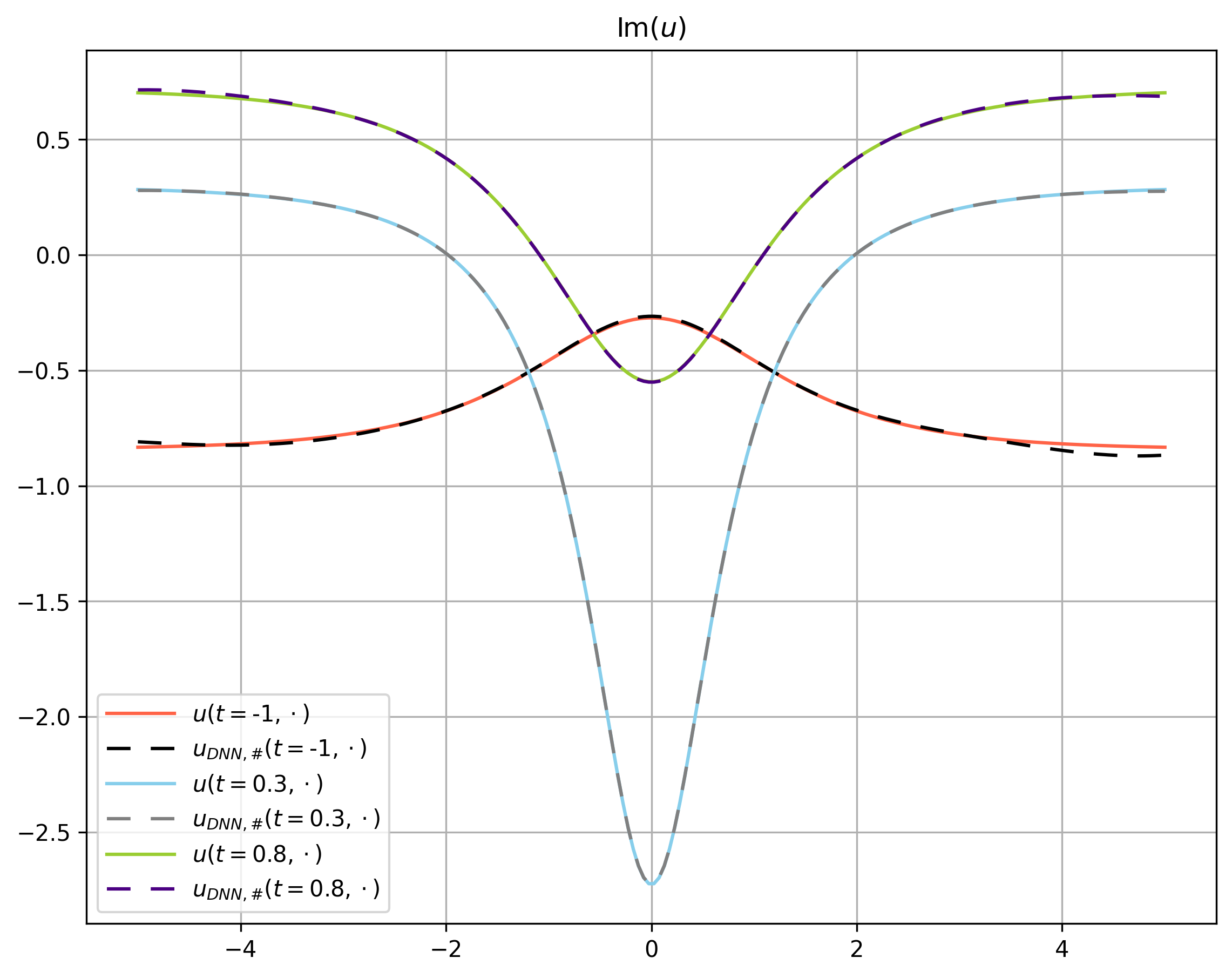}
	\caption{Imaginary part of the exact (continuous line) and approximate (dashed line) KM breather at times $t=$ -1, 0.3 and 0.8.}
	\label{fig:4-2}
\end{subfigure}
\caption{Exact (continuous line) and approximate (dashed line) KM breather.}
\label{fig:4}
\end{figure}

In Fig. \ref{fig:5}, the evolution of the computed constants $\widetilde A$, $A$, $B$, and the error of the approximation is presented in terms of the number of iterations of the numerical algorithm for the KM solution. In this case Figs. \ref{fig:5-1}-\ref{fig:5-2}-\ref{fig:5-3} are in agreement with the description already found in the previous cases, {\color{black}while Fig. \ref{fig:5-4} shows that the loss in the optimization algorithm has certain ``explosions'' during the training}.

\begin{figure}[!ht]
\centering
\begin{subfigure}[t]{0.4\textwidth}
	\centering
	\includegraphics[width=\textwidth]{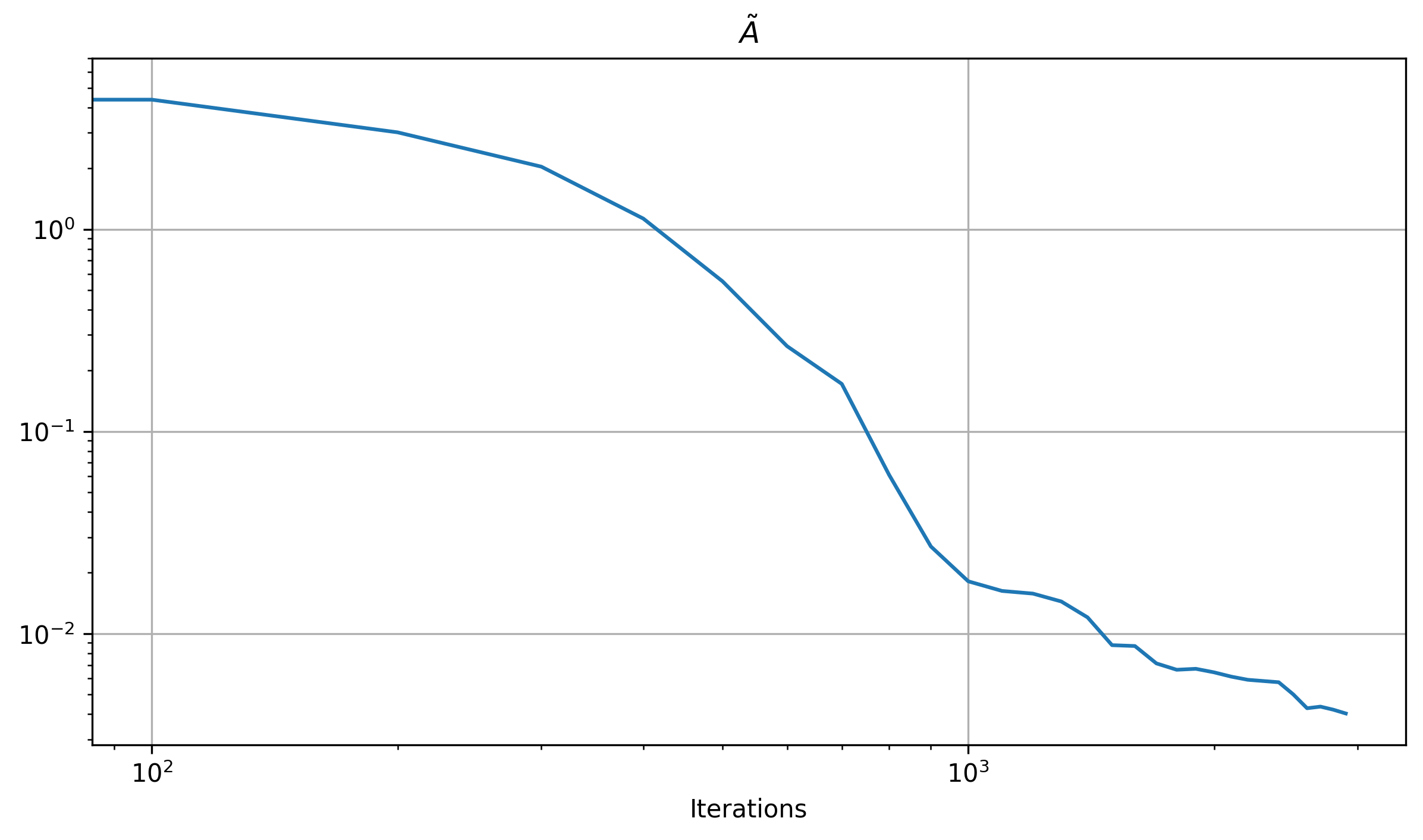}
	\caption{Evolution of the computed constant $\widetilde A$.}
	\label{fig:5-1}
\end{subfigure}
\hspace{.2cm}
\begin{subfigure}[t]{0.4\textwidth}
	\centering
	\includegraphics[width=\textwidth]{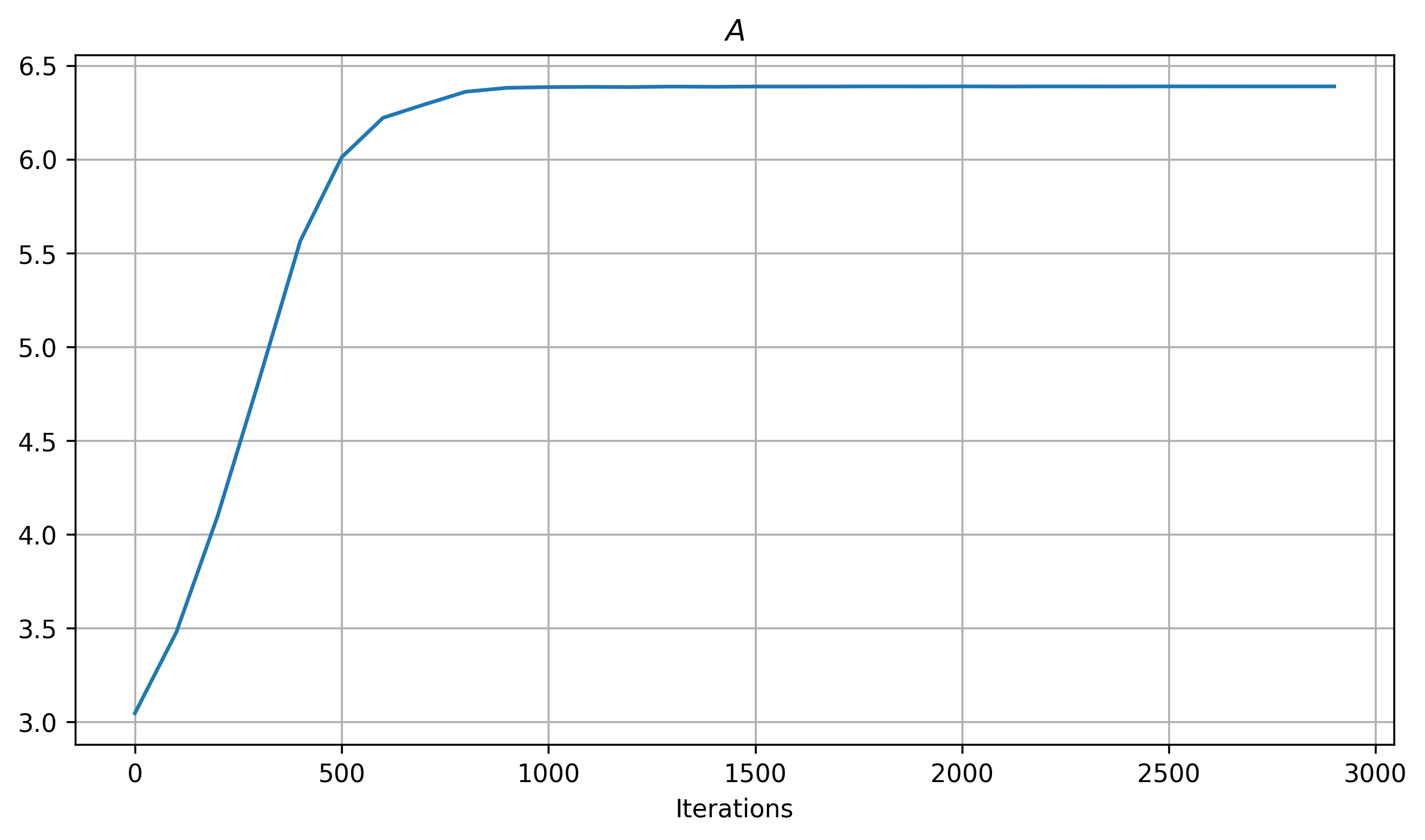}
	\caption{Evolution of the computed constant $A$.}
	\label{fig:5-2}
\end{subfigure}
\begin{subfigure}[t]{0.4\textwidth}
	\centering
	\includegraphics[width=\textwidth]{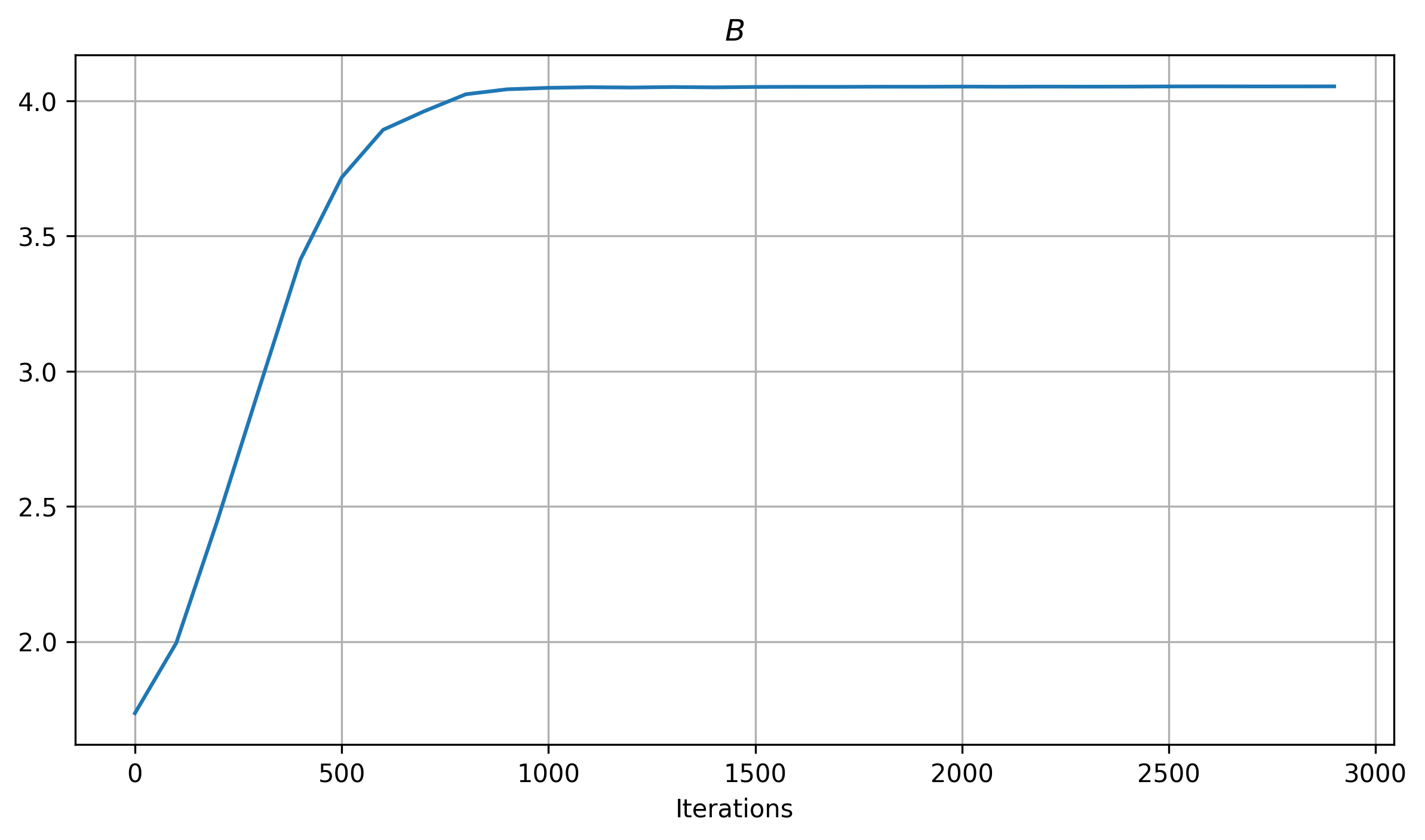}
	\caption{Evolution of the computed constant $B$.}
	\label{fig:5-3}
\end{subfigure}
\hspace{.2cm}
\begin{subfigure}[t]{0.4\textwidth}
	\centering
	\includegraphics[width=\textwidth]{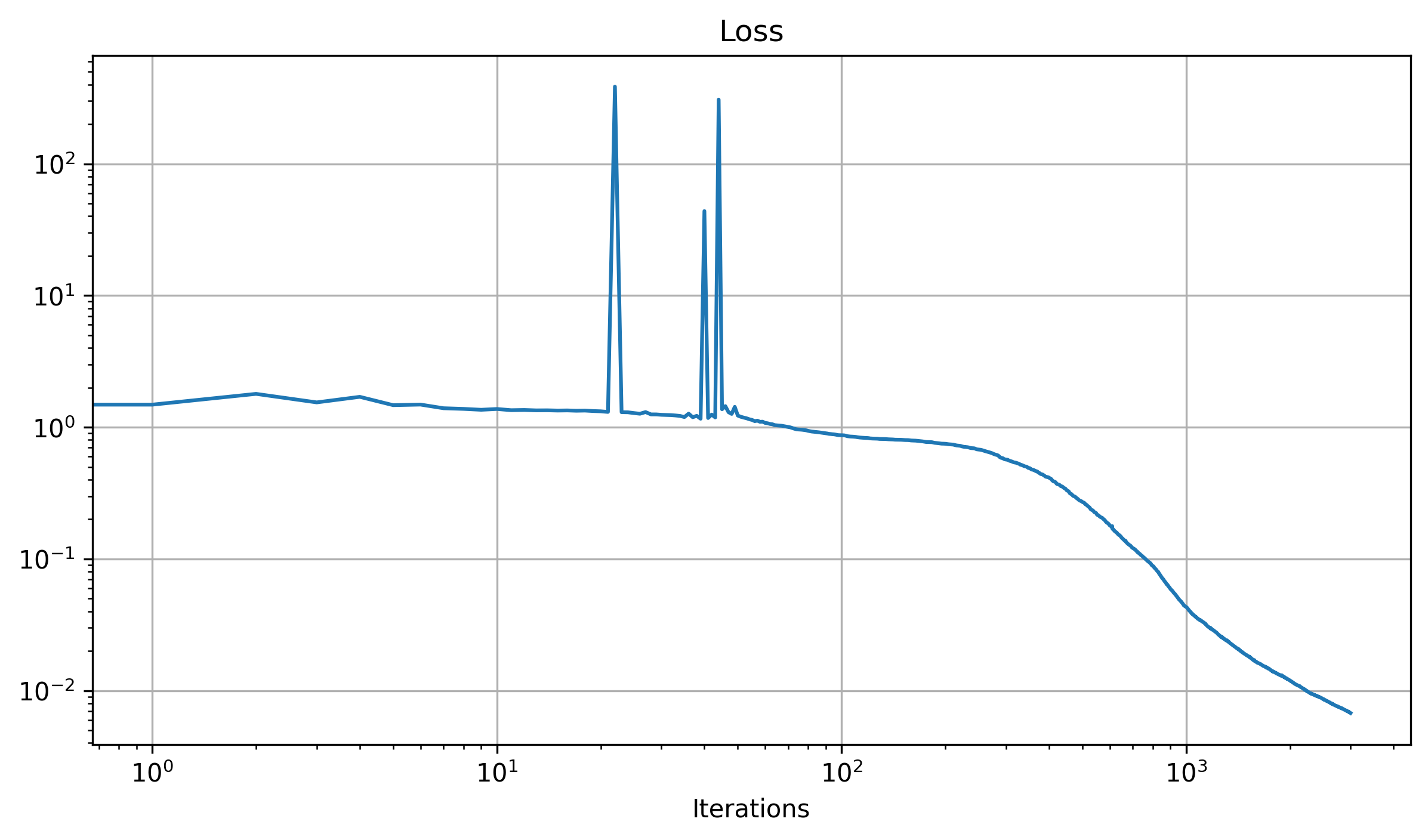}
	\caption{Error of the numerical approximation.}
	\label{fig:5-4}
\end{subfigure}
\caption{Computed evolution of the error and constants $\widetilde A$, $A$ and $B$ in the case of the KM breather.}
\label{fig:5}
\end{figure}

Table \ref{tab:iters-KM} summarizes the elapsed time and the evolution through the iterations of the  associated norms of the difference between the exact and the approximate solutions in the case of the KM breather with $a=\frac 34$. The time interval in these cases is $[-1,1]$, and the space interval is $[-5,5]$. These norms are computed using \eqref{Sprime-error},\eqref{LpW1q-error} and \eqref{LinfH1-error} with uniform weights equal 1, and uniform $N_\text{test}$ and $M_\text{test}$ given by 100. As a conclusion, from Table \ref{tab:iters-KM} we deduce that, during the numerical computations all the norms involved in Theorem \ref{MT} decreased to a value of order $10^{-2}$ and it takes around one minute and 30 seconds. 

\begin{table}[!ht]
\begin{tabular}{lllll}
\hline
Iterations & Elapsed time [s]  & error$_{L^{p}_tW^{1,q}_x}$ & error$_{L^{\infty}_tH^1}$ & error$_{\mathcal{C}'}$  \\ \hline
\hline
100        & {\color{black}3.237}               & {\color{black}3.371}                     & {\color{black}5.138}                     & {\color{black}5.138}                  \\
500        & {\color{black}15.371}           & {\color{black}1.405}                      & {\color{black}2.442}                     & {\color{black}2.442}                  \\
1000       & {\color{black}31.610}              & {\color{black}0.133}                      & {\color{black}0.256}                   & {\color{black}0.256}                  \\
3000       & {\color{black}92.605}              & {\color{black}$5.561 \times 10^{-2}$}     & {\color{black}$9.020 \times 10^{-2}$}                     & {\color{black}$9.020 \times 10^{-2}$}                   \\
\hline
\end{tabular}
\caption{{\color{black}Average} values of the elapsed time and norms involved in Theorem \ref{MT}, for five {\color{black}independent realizations of the algorithm} in the KM breather case.}
\label{tab:iters-KM}
\end{table}

\subsection{Peregrine breather} This is also another example of a solution with nonzero background at infinity for which the numerical approach works in a satisfactory fashion. We consider the case of the {\color{black}Peregrine} breather $B_\text{P}$ described in \eqref{BP}. {\color{black} Note that in this case there is no extra parameter that builds a family of solutions of the form of \eqref{BP}.}  Here, the space region is $[-10,10]$ and the time region {\color{black}$[-1.5,1.5]$}. However, for the Peregrine breather we choose {\color{black}$N_4 = 128$, $N_5 = 64$, $M_4=M_5=32$} {\color{black}and 4 hidden layers with 40 neurons each, as equal as in the KM breather case}. Our results are summarized in Fig.  \ref{fig:7}, where the continuous line represents the exact solution and the dashed line is the solution computed using the proposed PINNs minimization procedure. In particular, Fig. \ref{fig:7-1} and \ref{fig:7-2} present respectively the real and imaginary parts of the computed {\color{black}breather} solution for three different times.
\begin{figure}[!ht]
\centering
\begin{subfigure}[t]{0.45\textwidth}
	\centering
	\includegraphics[width=\textwidth]{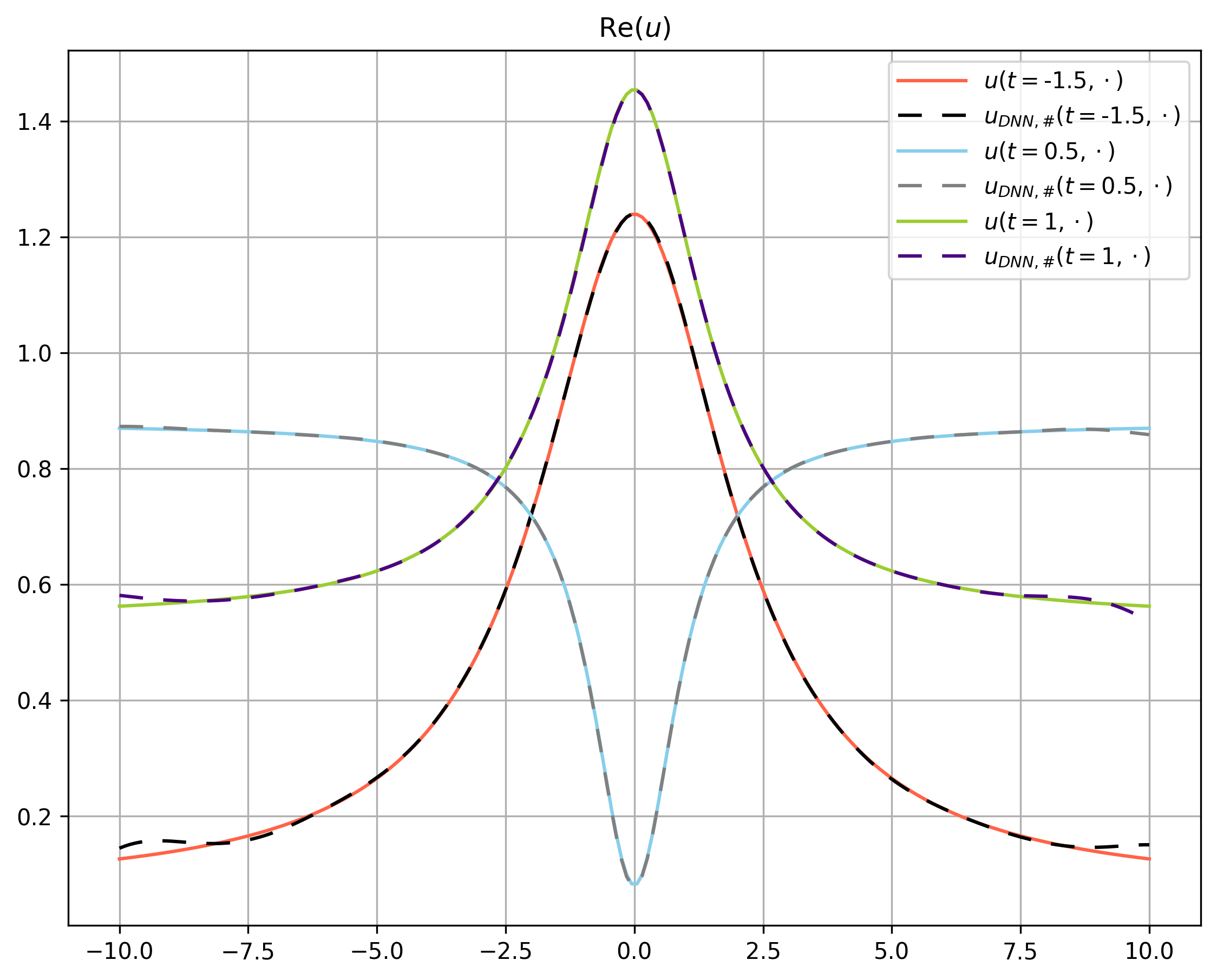}
	\caption{Real part of the exact (continuous line) and approximate (dashed line) Peregrine breather at times $t=$ -1.5, 0.5 and 1.}
	\label{fig:7-1}
\end{subfigure}
\hspace{.2cm}
\begin{subfigure}[t]{0.45\textwidth}
	\centering
	\includegraphics[width=\textwidth]{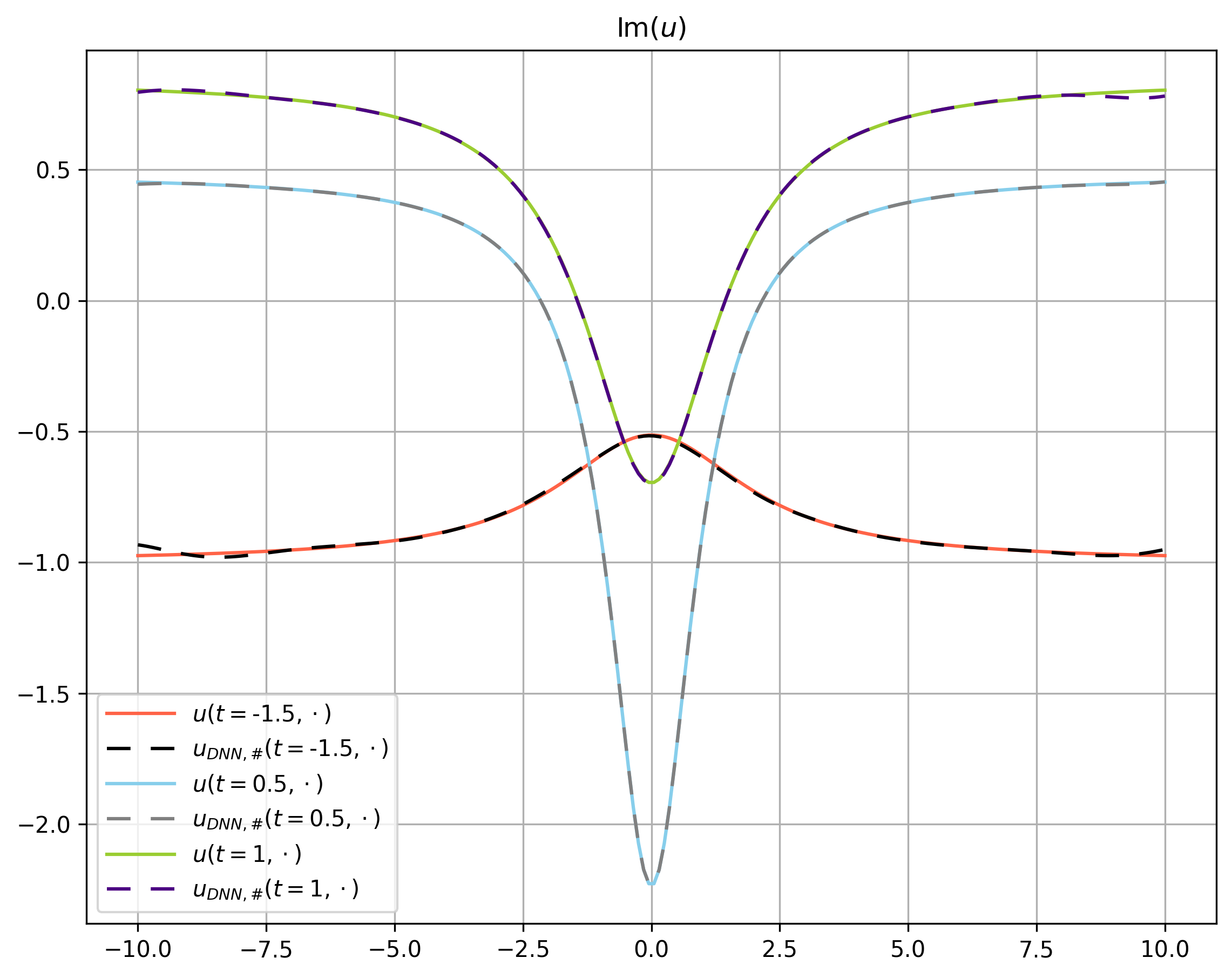}
	\caption{Imaginary part of the exact (continuous line) and approximate (dashed line) Peregrine breather at times $t=$ -1.5, 0.5 and 1.}
	\label{fig:7-2}
\end{subfigure}
\caption{Exact (continuous line) and approximate (dashed line) Peregrine breather.}
\label{fig:7}
\end{figure}

In Fig. \ref{fig:8}, the evolution of the computed constants $\widetilde A$, $A$, $B$, and the error of the approximation is presented in terms of the number of iterations of the numerical algorithm for the {\color{black}Peregrine} solution. Figs. \ref{fig:8-1}-\ref{fig:8-2}-\ref{fig:8-3} and \ref{fig:8-4} are in agreement with the description already found in the solitonic and 2-solitonic cases. 

\begin{figure}[!ht]
\centering
\begin{subfigure}[t]{0.4\textwidth}
	\centering
	\includegraphics[width=\textwidth]{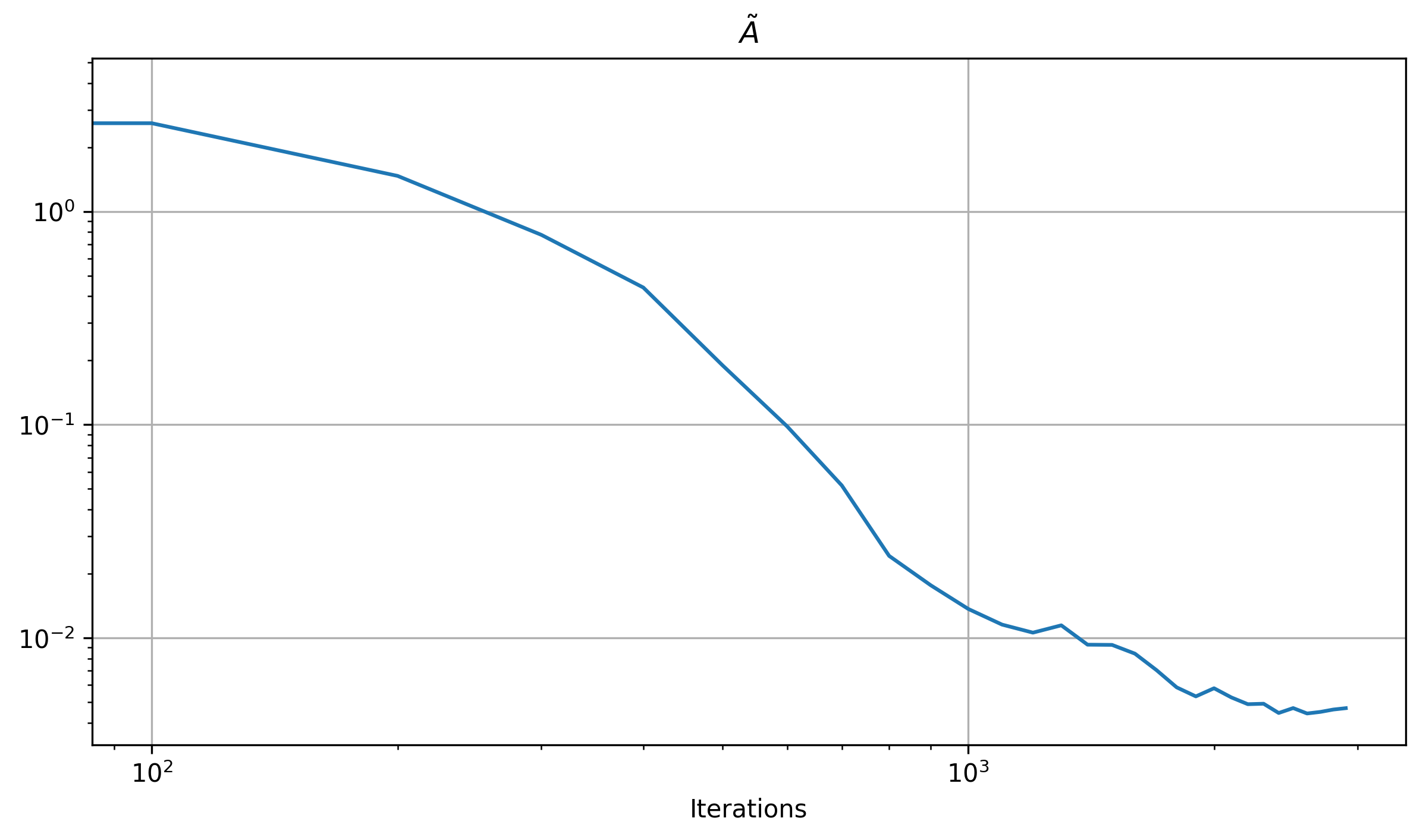}
	\caption{Evolution of the computed constant $\widetilde A$.}
	\label{fig:8-1}
\end{subfigure}
\hspace{.2cm}
\begin{subfigure}[t]{0.4\textwidth}
	\centering
	\includegraphics[width=\textwidth]{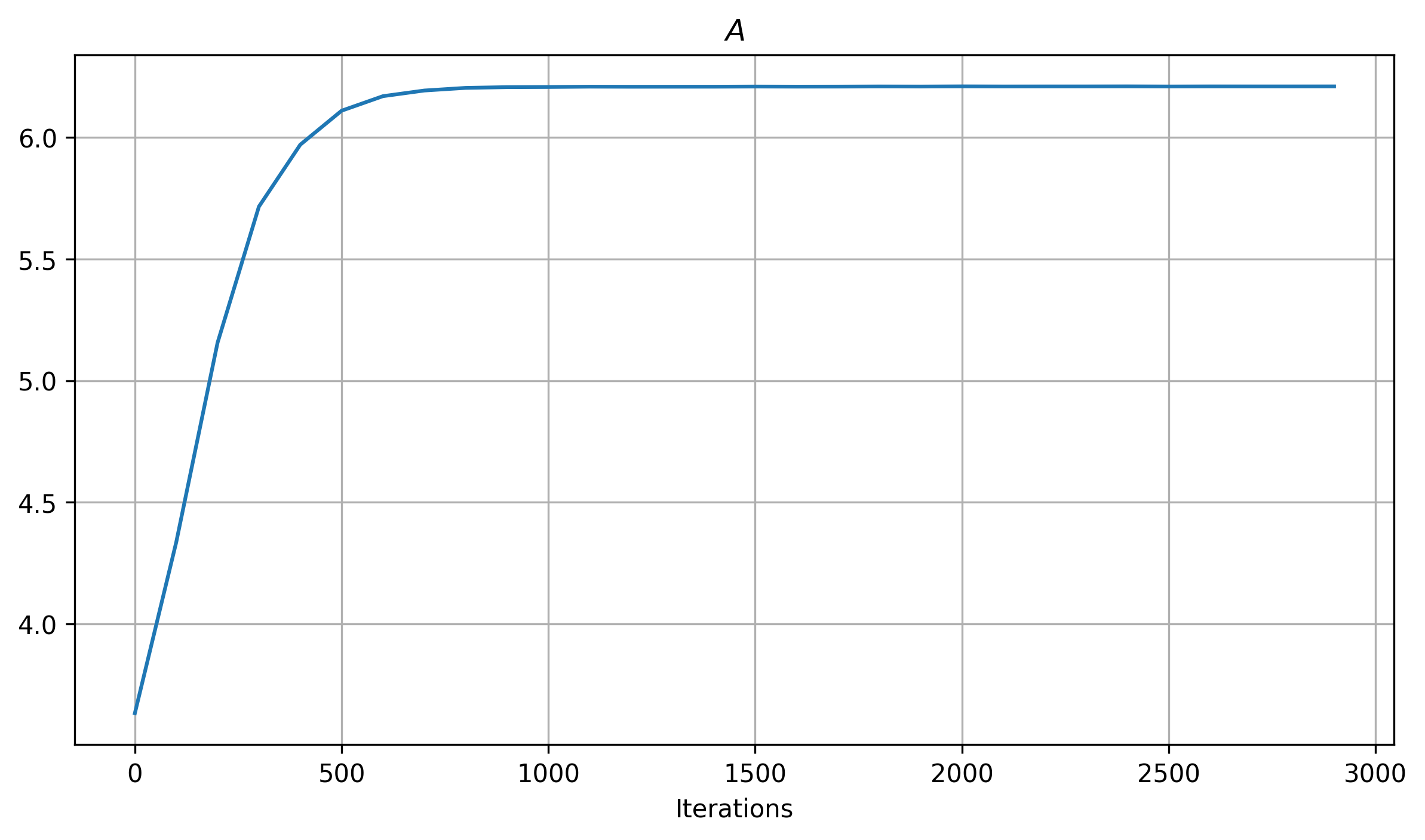}
	\caption{Evolution of the computed constant $A$.}
	\label{fig:8-2}
\end{subfigure}
\begin{subfigure}[t]{0.4\textwidth}
	\centering
	\includegraphics[width=\textwidth]{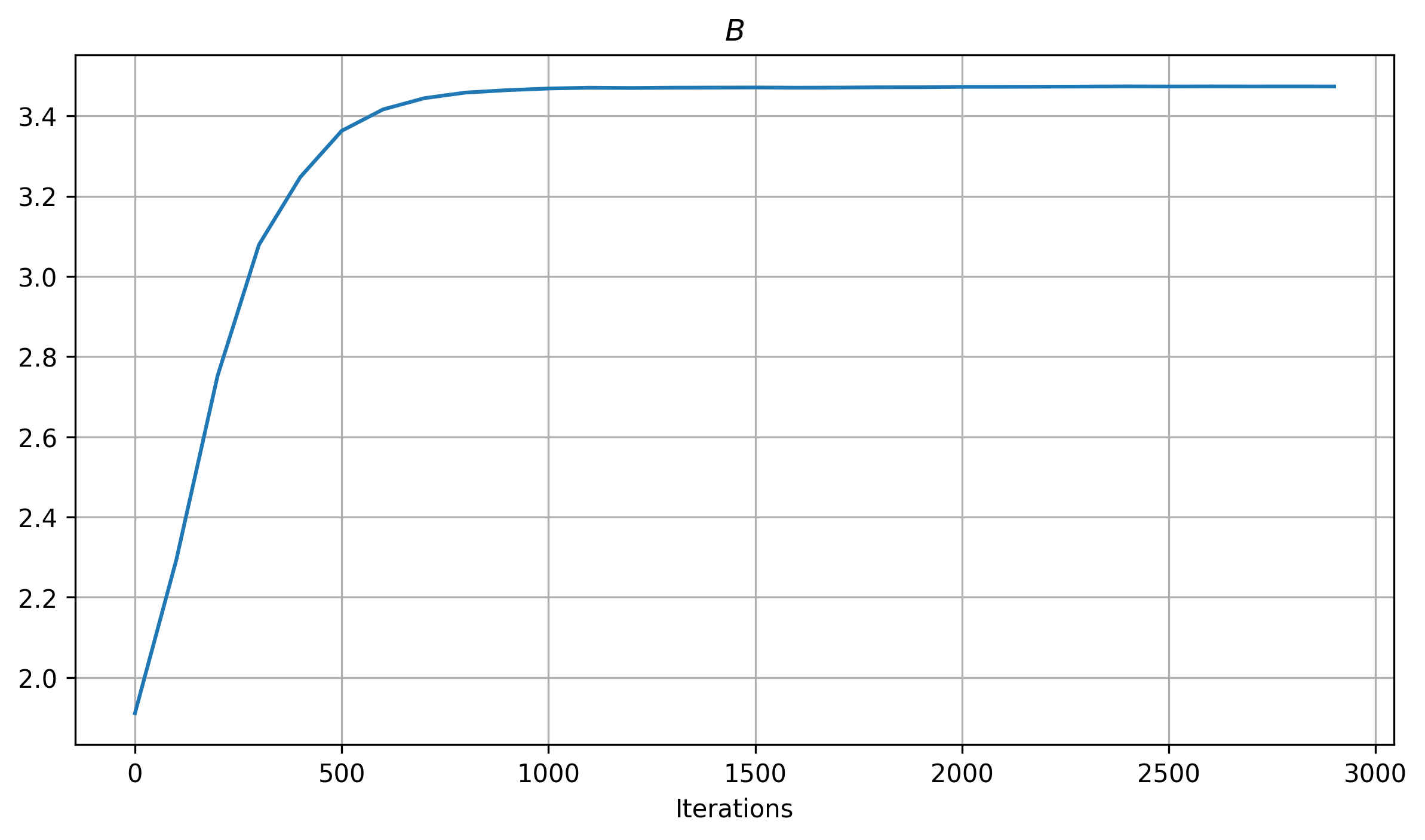}
	\caption{Evolution of the computed constant $B$.}
	\label{fig:8-3}
\end{subfigure}
\hspace{.2cm}
\begin{subfigure}[t]{0.4\textwidth}
	\centering
	\includegraphics[width=\textwidth]{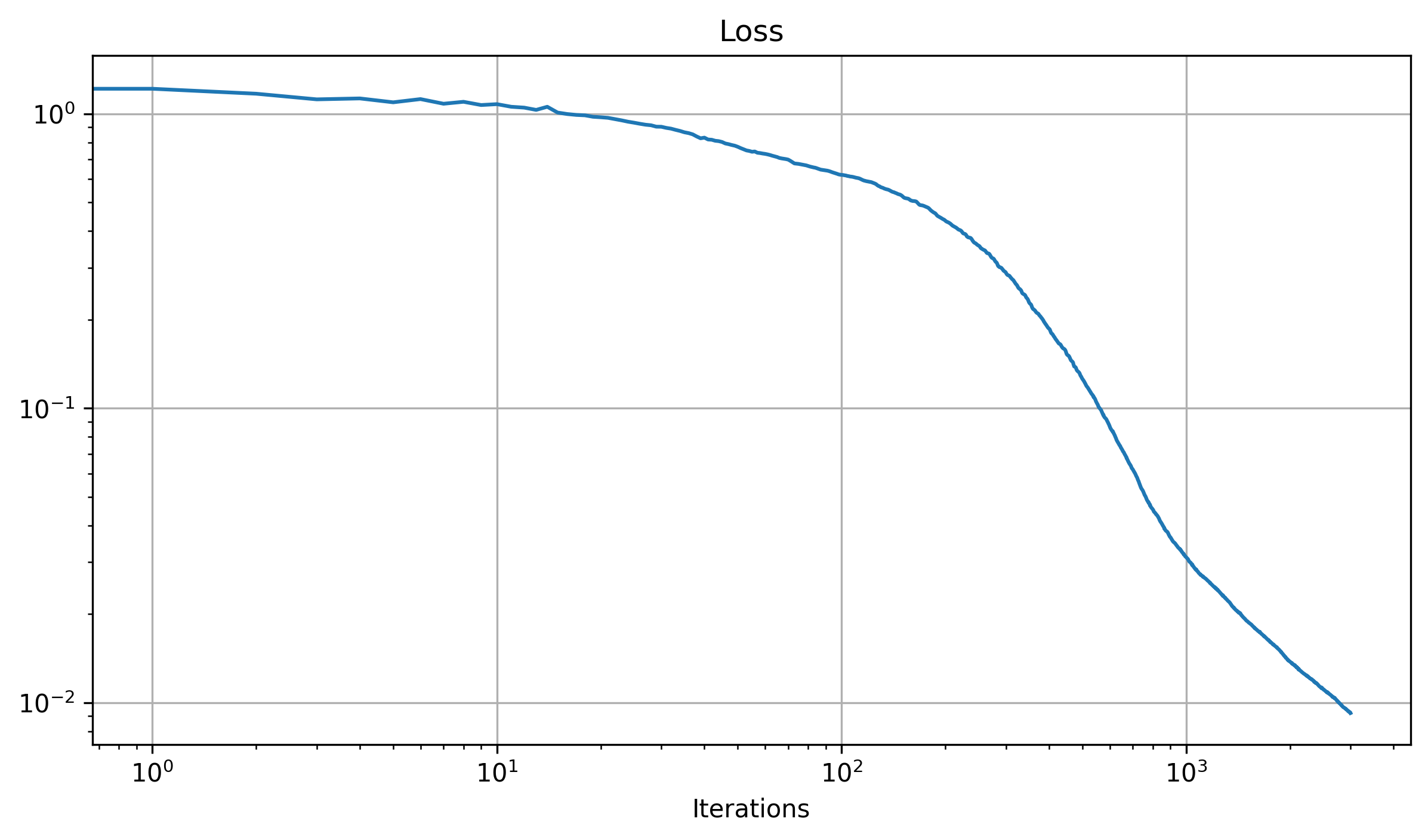}
	\caption{Error of the numerical approximation.}
	\label{fig:8-4}
\end{subfigure}
\caption{Computed evolution of the error and constants $\widetilde A$, $A$ and $B$ in the case of the Peregrine breather.}
\label{fig:8}
\end{figure}

Table \ref{tab:iters-Peregrine} summarizes the elapsed time and the evolution through the iterations of the  associated norms of the difference between the exact and the approximate solutions in the case of the Peregrine breather. The time interval in these cases is {\color{black}$[-1.5,1.5]$}, and the space interval is $[-10,10]$. These norms are computed using \eqref{Sprime-error},\eqref{LpW1q-error} and \eqref{LinfH1-error} with uniform weights equal 1, and uniform $N_\text{test}$ and $M_\text{test}$ given by 100. As a conclusion, from Table \ref{tab:iters-Peregrine} we deduce that, during the numerical computations the norms {\color{black}$L_t^\infty H^1$ and $\mathcal C'$} involved in Theorem \ref{MT} decreased to a value of order $10^{-1}$, {\color{black}while the norm $L^p_t W^{1,q}_x$ has order $10^{-2}$} and it takes less than 1 minutes {\color{black}30} seconds. Note in this case the convergence is slower than in the previous solitonic and KM breather cases. 

\begin{table}[!ht]
\begin{tabular}{lllll}
\hline
Iterations & Elapsed time [s]  & error$_{L^{p}_tW^{1,q}_x}$ & error$_{L^{\infty}_tH^1}$ & error$_{\mathcal{C}'}$  \\ \hline
\hline
100        & {\color{black}3.344}               & {\color{black}3.242}                     & {\color{black}5.428}                     & {\color{black}5.428}                  \\
500        & {\color{black}16.675}           & {\color{black}1.269}                      & {\color{black}2.369}                     & {\color{black}2.369}                  \\
1000       & {\color{black}31.885}              & {\color{black}0.192}                      & {\color{black}0.412}                  & {\color{black}0.412}                  \\
3000       & {\color{black}82.720}              & {\color{black}$7.068 \times 10^{-2}$}     & {\color{black}0.133}                    & {\color{black}0.133}                 \\
\hline
\end{tabular}
\caption{{\color{black}Average} values of the elapsed time and norms involved in Theorem \ref{MT}, for five {\color{black}independent realizations of the algorithm} in the Peregrine breather case.}
\label{tab:iters-Peregrine}
\end{table}

\subsection{Sensitivity analysis}
We will now focus in the solitonic and KM breather cases. The number of points in the grid was chosen as $M_4=M_5=32$ and $N_4=N_5=64$ for both cases. 

First of all we study the variation on the hyperparameters of the solitonic solution. In particular, Tables \ref{Tab:c1} and \ref{Tab:c3} present the values of computation time, loss function \eqref{eq:loss}, computed approximate values for $\widetilde A$, $A$, $B$, and the approximate norms $L_t^pW_x^{1,q}$ and $L_t^\infty H^1$ for the whole space-time computation. This is done for two and three different values of $c$ and $\nu$, respectively. For this end, we fix $[-10,10]$ the space region and $[-1,1]$ the time region, {and 5 hidden layers with 50 neurons each}. It is noticed that errors are usually between the orders $10^{-3}$ and $10^{-2}$. Then, the constant $\widetilde A$ is usually found small of order $10^{-3}$, as one might expect when approximating the solution for all times. Although the values of $A$ and $B$ are reasonably bounded below {\color{black}8, both quantities} increases when $\nu$ {\color{black}or $c$} increases. Finally, the three computed norms involved in Theorem \ref{MT} are suitably small, but one or two orders higher than the value of $\widetilde A$.  

\begin{table}[!ht]
\begin{tabular}{|l|l|l|l|}
\hline
$\nu$ & 1 & 3 & 5 \\
\hline
Elapsed Time $[s]$ & $82.427$ & $66.784$ & $77.742$ \\
\hline
Loss & $1.250 \times 10^{-3}$ & $1.569 \times 10^{-3}$ & $2.277 \times 10^{-3}$ \\
\hline
$\tilde A$ & $1.059 \times 10^{-3}$ & $9.660 \times 10^{-4}$ & $1.025 \times 10^{-3}$ \\
\hline
$A$ & $2.517$ & $3.786$ & $5.508$ \\
\hline
$B$ & $1.752$ & $2.700$ & $4.235$ \\
\hline
error$_{L_{t,x}^2}$ & $6.168 \times 10^{-3}$ & $4.101 \times 10^{-3}$ & $2.273 \times 10^{-3}$ \\
\hline
error$_{L_t^p W_x^{1,q}}$ & $1.262 \times 10^{-2}$ & $8.999 \times 10^{-3}$ & $5.972 \times 10^{-3}$ \\
\hline
error$_{L_t^\infty H^1}$ & $2.034 \times 10^{-2}$ & $1.440 \times 10^{-2}$ & $1.080 \times 10^{-2}$ \\
\hline
error$_{\mathcal{C}'}$ & $2.034 \times 10^{-2}$ & $1.440 \times 10^{-2}$ & $1.080 \times 10^{-2}$ \\
\hline
\end{tabular}
\caption{Computed values of the elapsed time, errors, constants and norms involved in Theorem \ref{MT}, for $c=1$ and three different values of $\nu$, in the solitonic case.}
\label{Tab:c1}
\end{table}

\begin{table}[!ht]
\begin{tabular}{|l|l|l|l|}
\hline
$\nu$ & 1 & 3 & 5 \\
\hline
Elapsed Time $[s]$ & $84.788$ & $87.744$ & $87.027$ \\
\hline
Loss & $5.553 \times 10^{-3}$ & $7.164 \times 10^{-3}$ & $1.278 \times 10^{-2}$ \\
\hline
$\tilde A$ & $1.560 \times 10^{-3}$ & $3.088 \times 10^{-3}$ & $4.104 \times 10^{-3}$ \\
\hline
$A$ & $3.948$ & $5.426$ & $7.563$ \\
\hline
$B$ & $3.022$ & $4.394$ & $6.594$ \\
\hline
error$_{L_{t,x}^2}$ & $7.861 \times 10^{-3}$ & $7.040 \times 10^{-3}$ & $9.493 \times 10^{-3}$ \\
\hline
error$_{L_t^p W_x^{1,q}}$ & $1.032 \times 10^{-2}$ & $1.285 \times 10^{-2}$ & $2.271 \times 10^{-2}$ \\
\hline
error$_{L_t^\infty H^1}$ & $1.955 \times 10^{-2}$ & $2.305 \times 10^{-2}$ & $4.211 \times 10^{-2}$ \\
\hline
error$_{\mathcal{C}'}$ & $1.955 \times 10^{-2}$ & $2.305 \times 10^{-2}$ & $4.211 \times 10^{-2}$ \\
\hline
\end{tabular}
\caption{Computed values of the elapsed time, errors, constants and norms involved in Theorem \ref{MT}, for $c=3$ and three different values of $\nu$, in the solitonic case.}
\label{Tab:c3}
\end{table}

Later, in Table \ref{Tab:2} we made the same analysis for the KM breather. This is done for four different values of $a$. For this end we fix $[-4,4]$ the space region, $[-0.5,0.5]$ the time region, and 4 hidden layers with 40 neurons each. It is noticed that errors are usually of the order $10^{-2}$ with the exception on the errors $L_{t}^\infty H^1$ and $\mathcal C'$ in the cases $a = 3/2,2$. As in the solitonic case, the constant $\widetilde A$ is usually found small {\color{black} of the order $10^{-3}$}. The norms $A$ and $B$ are reasonably bounded below {\color{black}10}, and both quantities increase when $a$ increases. Finally, the three computed norms involved in Theorem \ref{MT} are suitably small, but two order higher than the value of $\widetilde A$, and the $\mathcal{C}'$ norm is dictated by the $L^{\infty}_t H^1$ norm.

\begin{table}[!ht]
\begin{tabular}{|l|l|l|l|l|}
\hline
$a$ & 3/4 & 1 & 3/2 & 2 \\
\hline
Elapsed Time $[s]$ & $57.469$ & $52.725$ & $54.462$ & $63.542$ \\
\hline
Loss & $2.769 \times 10^{-3}$ & $3.906 \times 10^{-3}$ & $6.210 \times 10^{-3}$ & $1.090 \times 10^{-2}$ \\
\hline
$\tilde A$ & $4.022 \times 10^{-3}$ & $4.065 \times 10^{-3}$ & $7.018 \times 10^{-3}$ & $8.304 \times 10^{-3}$ \\
\hline
$A$ & $6.236$ & $7.055$ & $8.471$ & $9.724$ \\
\hline
$B$ & $4.042$ & $4.636$ & $5.709$ & $6.684$ \\
\hline
error$_{L_{t,x}^2}$ & $1.165 \times 10^{-2}$ & $1.239 \times 10^{-2}$ & $2.097 \times 10^{-2}$ & $2.834 \times 10^{-2}$ \\
\hline
error$_{L_t^p W_x^{1,q}}$ & $3.638 \times 10^{-2}$ & $3.895 \times 10^{-2}$ & $6.452 \times 10^{-2}$ & $7.251 \times 10^{-2}$ \\
\hline
error$_{L_t^\infty H^1}$ & $5.861 \times 10^{-2}$ & $6.658 \times 10^{-2}$ & $1.046 \times 10^{-1}$ & $1.332 \times 10^{-1}$ \\
\hline
error$_{\mathcal{C}'}$ & $5.861 \times 10^{-2}$ & $6.658 \times 10^{-2}$ & $1.046 \times 10^{-1}$ & $1.332 \times 10^{-1}$ \\
\hline
\end{tabular}
\caption{Computed values of the elapsed time, errors, constants and norms involved in Theorem \ref{MT}, for four different values of $a$, in the KM breather case.}
\label{Tab:2}
\end{table}

{\color{black} Additionally, we investigate the effect on the weight factor for the derivatives presented in Remark \ref{rem:loss}. To this end, we consider the solitonic case with parameters $c=3$ and $\nu=5$, and test five different values for the weight: $0, 10^{-3}, 10^{-2}, 0.1$ and 1. Table \ref{Tab:weight_der} summarizes the computed norms corresponding to each weight value. As shown in Table \ref{Tab:weight_der}, the choice of the weighting factor has a significant impact on the performance of the algorithm: large weights introduce an additional numerical errors, which in turn lead to higher approximative errors.  On the other hand, completely neglecting the derivative terms yields acceptable results, but suboptimal compared to using a small, strictly positive weight. 

Finally, to have almost the same orders in Table \ref{Tab:weight_der}, the value of $\gamma$ needed to be changed. In particular, if $\gamma>1$ in the cases $0.1$ and $1$, the algorithm stoped in the first 100 iterations, converging to the trivial solution $u \equiv 0$.

\begin{table}[!ht]
\begin{tabular}{lllll}
\hline
Weight & $\gamma$ & error$_{L^{p}_tW^{1,q}_x}$ & error$_{L^{\infty}_tH^1}$ & error$_{\mathcal C'}$  \\ \hline
\hline
0  & 3 & $2.988 \times 10^{-2}$ & $5.316 \times 10^{-2}$ & $5.316 \times 10^{-2}$  \\
$0.001$ & 3 & $2.271 \times 10^{-2}$ & $4.211 \times 10^{-2}$ & $4.211 \times 10^{-2}$ \\
$0.01$ & 2 & $6.945 \times 10^{-2}$ &$1.160 \times 10^{-1}$ & $1.160 \times 10^{-1}$ \\
$0.1$ & 0.5 & $3.641 \times 10^{-2}$ &  $6.621 \times 10^{-2}$ &  $6.621 \times 10^{-2}$\\
1  & 0.1 &  $2.971 \times 10^{-2}$ & $6.045 \times 10^{-2}$ & $6.045 \times 10^{-2}$\\
\hline
\end{tabular}
\caption{Computed values of the norms involved in Theorem \ref{MT} for different values of the weight factor in the derivative as shown in Remark \ref{rem:loss}, in the solitonic case with $c=3$ and $\nu=5$.}
\label{Tab:weight_der}
\end{table}
}

\medskip
{\bf Effect of time:} In order to study the performance of the algorithm, we will change the size of the time interval. For this aim, we consider 5000 iterations, 40 neurons per hidden layer {\color{black}with $N_4=128$ and $N_5=M_4=M_5=32$}. Fig. \ref{fig:soliton-T4} summarizes the performance of the solitonic case with $c = \nu = 1$ in the space region $[-10,10]$ and the time region $[-4,4]$, where the continuous line represents the exact solution and the dashed line is the solution computed using the proposed PINNs minimization procedure. In particular, Fig. \ref{fig:soliton-T4-1} and \ref{fig:soliton-T4-2} present respectively the real and imaginary parts of the computed soliton solution for three different times. Our simulations suggest that for greater values of the time interval, the train grid needs to be fine-tuned.

\begin{figure}[!ht]
\centering
\begin{subfigure}[t]{0.45\textwidth}
	\centering
	\includegraphics[width=\textwidth]{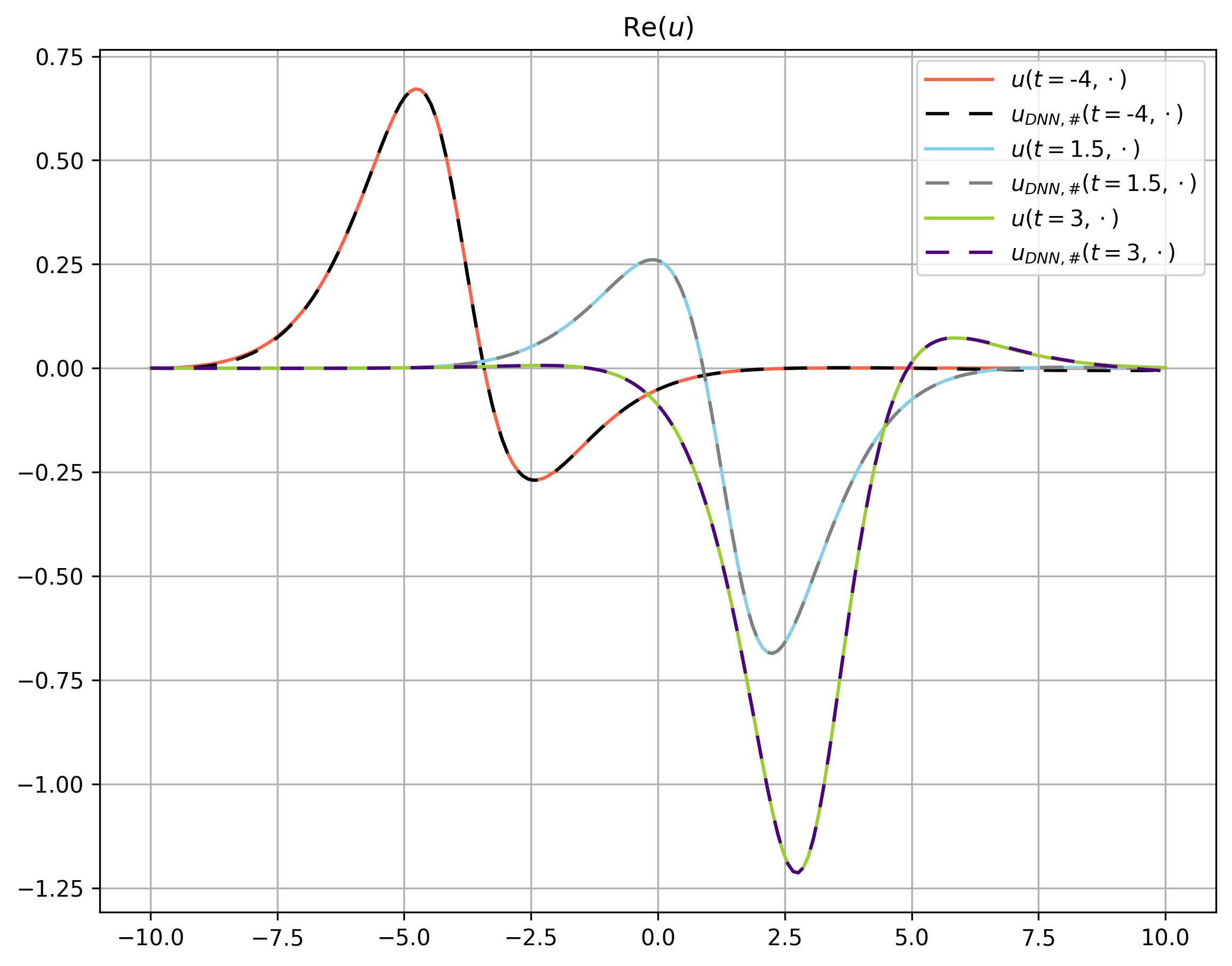}
	\caption{Real part of the exact (continuous line) and approximate (dashed line) soliton solution at times $t=$ -4, 1.5 and 3.}
	\label{fig:soliton-T4-1}
\end{subfigure}
\hspace{.2cm}
\begin{subfigure}[t]{0.45\textwidth}
	\centering
	\includegraphics[width=\textwidth]{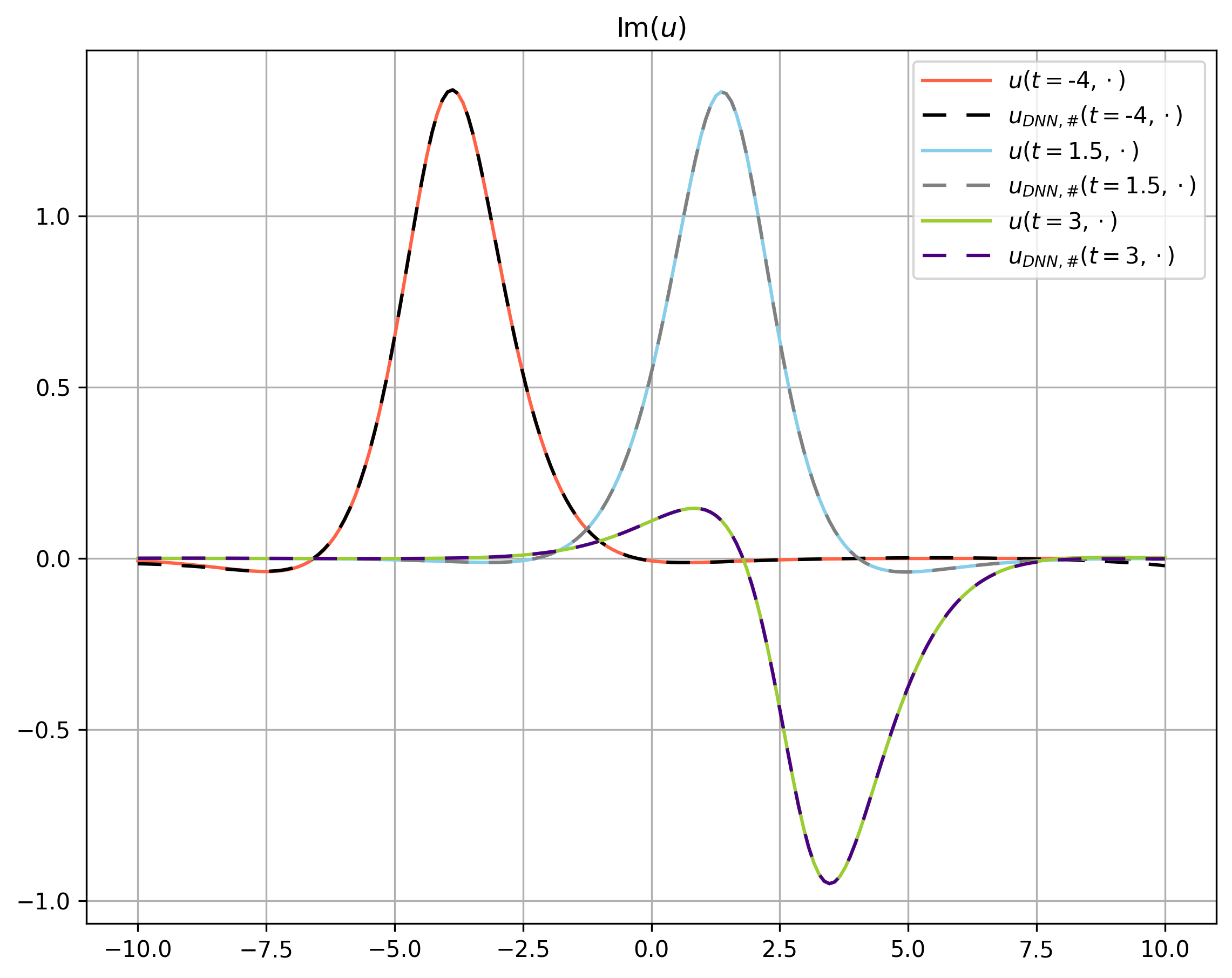}
	\caption{Imaginary part of the exact (continuous line) and approximate (dashed line) soliton solution at times $t=$ -4, 1.5 and 3.}
	\label{fig:soliton-T4-2}
\end{subfigure}
\caption{Approximation of the soliton solution in the case $c=\nu=1$.}
\label{fig:soliton-T4}
\end{figure}

In this case the algorithm takes {\color{black}145.072} seconds, and obtaining errors {\color{black}error$_{L^{p}_tW^{1,q}_x} = 1.720 \times 10^{-2}$, error$_{L^{\infty}_t H^1} = 2.780 \times 10^{-2}$, error$_{\mathcal{C}'} = 2.780 \times 10^{-2}$, loss$= 6.345 \times 10^{-4}$ with constants $\widetilde A = 3.455 \times 10^{-4}$, $A = 2.517$ and $B = 2.083$.}

{\color{black}
\subsection{Comparison with other approximation scheme} In this section we will compare the PINN algorithm with the finite differences (FD) method implemented in \cite{ABB13} for the Schr\"odinger equation \eqref{eq:NLS-anyalpha}. In the following comparisons we will treat only with the solitonic case. As similar as in the sensitivity analysis section we will use $c \in \{1,3\}$ and $\nu \in \{1,3,5\}$ and the space-time region will be $[-10,10]\times[-1,1]$ as before. Notice that the breather cases cannot be compared using the FD method. For these cases the time-splitting sine pseudospectral (TSSP) method, also described in \cite{ABB13} but which will not be discussed in this paper, can be used.

Table \ref{Tab:FDM} summarizes the elapsed time and the $L^{p}_tW^{1,q}_x$, $L^{\infty}_t H^1_x$ and $L^2_{t,x}$ errors in the six different pairs $(c,\nu)$ {\color{black}for the FD method}. The FD method is performed with 400 points in the space grid and $1.8 \times 10^{6}$ points in the time grid. 
\begin{table}[!ht]
\begin{tabular}{llllll}
\hline
c & $\nu$ & Elapsed time [s]  & error$_{L^{p}_tW^{1,q}_x}$ & error$_{L^{\infty}_tH^1}$  & error$_{L^2_{t,x}}$ \\ \hline
\hline
1 & 1 & 24.101 & 1.999 $\times 10^{-3}$ & 3.452 $\times 10^{-3}$ & 1.423 $\times 10^{-3}$ \\
1 & 3 & 24.473 & 1.382 $\times 10^{-2}$ & 2.352 $\times 10^{-2}$ & 7.115 $\times 10^{-3}$ \\
1 & 5 & 25.238 & 6.830 $\times 10^{-2}$ & 1.184 $\times 10^{-1}$ & 2.703 $\times 10^{-2}$ \\
3 & 1 & 23.935 & 1.873 $\times 10^{-2}$ & 2.852 $\times 10^{-2}$ & 1.259 $\times 10^{-2}$ \\
3 & 3 & 24.690 & 7.939 $\times 10^{-2}$ & 1.152 $\times 10^{-1}$ & 3.410 $\times 10^{-2}$ \\
3 & 5 & 27.291 & 2.815 $\times 10^{-1}$ & 4.057 $\times 10^{-1}$ & 8.890 $\times 10^{-2}$ \\
\hline
\end{tabular}
\caption{Computed values of the elapsed time and norms involved in Theorem \ref{MT} using the FD method, for two and three different values of $c$ and $\nu$, respectively, in the solitonic case.}
\label{Tab:FDM}
\end{table}

By comparing the Tables \ref{Tab:c1} and \ref{Tab:c3} with the Table \ref{Tab:FDM}, the following conclusions can be made. First, although the FD method takes one third of time than our PINNs algorithm, they have the same order in the computation times. Additionally, {\color{black}only in the setting $c=1, \nu = 1$ the FD method performs quite better than our algorithm. In the other settings of $c$ and $\nu$, the PINNs algorithm outperforms the FD method}.

Finally, the worst case of the FD method is $c=3$, $\nu = 5$: both errors $L^p_t W^{1,q}_x$ and $L^{\infty}_t H^1_x$ have order {\color{black}$O(10^{-1})$. In contrast, our PINNs method both errors are one less order.}

}

\section{Discussions and conclusions}\label{sec:discussion}

\subsection{Discussion} 

The Physics-Informed Neural Networks (PINN) methodology has recently been applied successfully to solve various nonlinear Schr\"odinger (NLS) equations, both analytically and numerically, in bounded domains. By integrating physical laws with neural networks, PINNs have achieved accurate solutions with minimal data requirements \cite{SSHC,BKM22,HD,PLC,WY,ZB,BMAR,RPK,Raissi,TSVS}. However, these studies are limited to spatially bounded domains and do not adequately address real-world physical phenomena, which often involve slow-decaying solutions or long-range potentials in unbounded domains.

For unbounded domains, recent advancements have emerged. In \cite{XCL}, a deep neural network (DNN) was developed to implement an absorbing boundary condition (ABC), which confines computations to a finite domain. The ABC minimizes unwanted reflections at the boundaries, effectively replicating the influence of the surrounding environment without fully discarding the unbounded regions. This method has been applied to wave and Schrödinger equations in unbounded domains. Despite their success, PINNs can be challenging to train and prone to practical issues, as demonstrated in \cite{FWP}, particularly for wave scattering problems in unbounded domains \cite{Colt}. Such problems typically require sampling collocation points across an infinite domain, rendering many traditional PINN approaches unsuitable. In response, \cite{FWP} employed boundary integral equations (BIEs), a robust method for solving partial differential equations (PDEs) in complex or unbounded domains by representing solutions in terms of boundary values.

In \cite{XBC}, the NLS equation on unbounded domains was addressed using a hybrid approach combining adaptive spectral and PINN methods. This approach used spectral basis functions and a priori assumptions about asymptotic spatial behavior to effectively describe the spatial dependence. Recent studies have introduced other techniques for dealing with unbounded domains: \cite{BE} introduced a similarity variable to handle infinity, while \cite{RRCWSL} truncated the computational domain to ensure that the wave exits the domain without disturbing the upstream field.

In contrast, our approach to handling unbounded domains does not rely on external basis functions. Instead, we demonstrate how to describe the critical focusing NLS evolution on the unbounded real line by imposing a condition on the evolution of the linear part and employing appropriate approximate norms. These norms, under certain smallness assumptions, allow for an accurate approximation of NLS dynamics in this setting.

{\color{black}

Regarding the numerical simulations, we have confirmed that Theorem \ref{MT} is valid in a good sense, specially after translating with care and detail into numerical methods, the norms involved in the statement if this result. In practice, Theorem \ref{MT} has needed some adjustment to fit better into the numerical analysis techniques. In particular, quadratic norms are usually used in numerics, while the theorem only considers linear norms. It is important to note that although the orders of the approximate norm are not as small as typically expected in numerical analysis, both the {\bf pointwise error and the mean squared error remain remarkably low}. This suggests that, within the standard functional spaces used in numerical analysis, the computed approximations are indeed accurate and reliable.

Then, it is worth noting that the loss function is not an exact approximation of the involved norms, principally because uniform weights equal to 1 were used instead of weights derived from a quadrature rule. Instead, the loss was normalized in the sense that the size of the time-space domain was not taken into account during the minimization process. This modification was intended to enhance the numerical optimization by avoiding the inclusion of a constant factor that is irrelevant to the optimizer. 

The choice of uniform weights (equal to 1) also contributed to greater stability of the algorithm. For comparison purposes, we conducted simulations using both type of weights, and those with uniform weights consistently outperformed the others. 

The proposed PINN method is capable of producing accurate approximations even when derivatives are omitted from the loss function, that is, by considering $L_x^q$ norms instead of $W_{x}^{1,q}$ norms. Nonetheless, in this work, we retained all derivatives in the simulations, to maintain consistency with the theoretical framework, along with the weight multiplication on the loss function to improve efficiency to the algorithm.

Finally, but not less important, there is the problem of finding a good approximation in the long time case, namely when $T$ is made large enough. We argue that this corresponds to a problem of different nature, by several reasons. The first is that if soliton's resolution conjecture is valid, after some large amount of time the dynamics will be strongly determined by soliton type solutions appearing from the dynamics, assuming of course that the dynamics is globally defined in time. Therefore, the understanding of the long time dynamics is closely related to the stability of solitons. Estimates as the one obtained in Theorem \ref{MT} are naturally condemned to fail, since modulations of solitons are always needed. We conclude the existence of a new PINNs problem, which is the description of the orbital stability property using deep neural networks. For instance: is it possible to design, via deep neural networks, modulations of velocity and scaling on a soliton, in such a way that the approximation is better suited for an important, large amount of time? This is a problem that will be described in detail in a forthcoming publication.

Another reason that makes the problem ``$T$ large'' important is the possible existence of blow up solutions in the case $\alpha>5$. This is another problem were PINNS may be able to show a strong approximation capacity, since after a suitable rescaling of the time variable, it is expected that blow up corresponds to a sort of asymptotic stability property of a well-chosen final state. This is another reason to consider the long time dynamics with even more care than the standard case $T$ small.
}
\subsection{Conclusions} We have studied the subcritical nonlinear Schr\"odinger (NLS) equation in one dimension on the unbounded real line. The non-compact nature of this setting introduces significant challenges for approximation schemes, particularly in the context of deep neural networks (DNNs), as there is no natural restriction on the data at spatial infinity. Previous works, such as \cite{RPK,BKM22}, have addressed NLS on bounded domains from both numerical and theoretical perspectives, advancing our understanding of dispersive solution approximations via DNN techniques.

In this paper, we propose a new approach using Physics-Informed Neural Networks (PINNs) that combines standard physical constraints with a novel component designed to handle the linear evolution of perturbed data in unbounded domains. We present what we believe are the first rigorous bounds on the associated approximation error in energy and Strichartz norms, which are fundamental in the NLS framework. Our analysis assumes the availability of suitable integration schemes capable of accurately approximating space-time norms.

Additionally, we applied this method to traveling waves, breathers, and solitons, and conducted numerical experiments that validate the effectiveness of the approximation. The results confirm that the proposed scheme performs well in capturing the dynamics of NLS solutions in unbounded domains.

\end{document}